\newcommand{\CC}{{\mathcal C}}
\newcommand{\R}{{\bf R}}
\newcommand{\QQ}{\mathbb{Q}}
\newcommand{\Cp}{\mathbb{C}}
 \DeclareMathOperator{\pt}{pt}
\DeclareMathOperator{\gl}{\mathfrak{gl}}
\DeclareMathOperator{\Spec}{Spec}
\DeclareMathOperator{\id}{id} 
\DeclareMathOperator{\Ho}{H}
\newtheorem{thm}{Theorem}[section]
\newtheorem{lem}[thm]{Lemma}
\newtheorem{cor}[thm]{Corollary}
\newtheorem{prop}[thm]{Proposition}
\newtheorem{defn}[thm]{Definition}
\newtheorem{conj}[thm]{Conjecture}
\newtheorem{rem}[thm]{Remark}
\newcommand{\bcen}{\begin{center}}
\newcommand{\ecen}{\end{center}}
\def\dim{{\rm dim}}
\def\Im{{\rm Im}}
\title{Cohomological Hall algebras,  semicanonical bases and Donaldson-Thomas invariants for $2$-dimensional Calabi-Yau categories \\(with an appendix by Ben Davison)}
\author{Jie Ren and Yan Soibelman}
\begin{document}
\maketitle

{\it to Maxim Kontsevich on his 50th birthday}

\tableofcontents

\section{Introduction}
Representations of a quiver $Q$ give rise in general to the category of cohomological dimension one. In this sense quivers are  analogous
to  curves.

In a similar vein representations of the preprojective algebra $\Pi_Q$ give rise  in general to the category of cohomological dimension two. Since  preprojective algebras are obtained as simplectic reductions, we can say that they are analogous to  Higgs bundles
on a curve. Likewise the deformed preprojective algebras $\Pi^{\lambda}$ are analogous to  algebraic vector bundles
with connections on a curve.  The interplay between algebraic and geometric sides of this
dictionary is a subject of many papers (here are few random samples: \cite{CB1}, \cite{Ef}, \cite{S}).

The above categories can be ``upgraded" to $3$-dimensional Calabi-Yau categories. On  the algebraic
side it can be achieved by constructing a ``triple'' quiver
$\widehat{Q}$ endowed with a cubic potential $W$, see e.g. \cite{Moz1} or Section 2 below. On the geometric side one takes the total space of an appropriate rank two vector bundle on a curve or the total space of the anticanonical bundle on a surface.
From the point of view of, say, gauge theory, the lower-dimensional categories can be thought of as ``dimensional reductions''
of the corresponding
$3$-dimensional Calabi-Yau categories.

The framework of $3$-dimensional Calabi-Yau categories is appropriate for the theory
of motivic Donaldson-Thomas invariants (see \cite{KoSo2}, \cite{KoSo3}). The above-mentioned dimensional reduction gives rise to the corresponding theory
in a lower dimension.  It is natural to ask about the meaning of the objects arising  as a result
of such  dimensional reduction.
Our paper illustrates this philosophy in two examples.

In our first example we discuss
semicanonical bases (see \cite {L1}, \cite{L2}) from the point of view of Cohomological Hall algebras (see \cite{KoSo3}).
In our second example we formulate  a conjecture about an analog
of the Kac polynomial of a $2$-dimensional Calabi-Yau category. The conjecture is motivated by \cite{Moz1} in which the motive of the stack of indecomposable representations of a quiver
(Kac polynomial) was expressed in terms of the motives of stacks of representations of the corresponding preprojective algebra and the Donaldson-Thomas invariants of the corresponding $3CY$ category.

It is known after Lusztig (see \cite{L1}, \cite{L2}) that
semicanonical bases can be interpreted in terms of top-dimensional
components of the stack of nilpotent representations of the
preprojective algebra $\Pi_Q$ associated with the quiver $Q$. On the
other hand, the  critical loci of $Tr(W)$ contains the stack of
representations of $\Pi_Q$. This gives an idea that the
semicanonical basis can be derived from the pair $(\widehat{Q},W)$.

Indeed, the pair $(\widehat{Q},W)$ gives rise to the corresponding Cohomological Hall algebra (COHA for short), see \cite{KoSo3}.
It is natural to transport the associative product from the COHA to the vector space generated by the
above-mentioned top-dimensional components (in a more invariant way one can
speak about the Galois-invariant part of COHA, cf. \cite{KoSo3}).
We will show that in this way we obtain  an associative algebra endowed with a basis which enjoys the properties of  the semicanonical one.

From the point of view of the above  dictionary between algebra and geometry, the preprojective algebra $\Pi_Q$ should be replaced in general by a
$2$-dimensional Calabi-Yau category ($2CY$ category for short), while the pair  $(\widehat{Q},W)$ should be replaced  by a $3$-dimensional
Calabi-Yau category ($3CY$ category for short). The ``dimensional reduction" of this  $3CY$
categories to a $2$-dimensional category (as well as the relation
between the corresponding cohomology groups of stacks of objects) was described in  \cite{KoSo3}, Section 4.8. One can expect a similar story in the general categorical framework.

In particular we expect that the notion of  semicanonical basis has intrinsic categorical meaning for  $2CY$ categories.
A class of such categories is proposed in Section 3.2. The categories
from our class are parametrized by quivers.

One can also hope that the reduction
from $3CY$ categories to $2$-dimensional categories will help to understand the relation between motivic Donaldson-Thomas theory
(see \cite{KoSo2}, \cite{KoSo3})
and some invariants of $2CY$ categories, e.g. Kac polynomials.\footnote{The relationship between motivic DT-invariants and Kac polynomials
was studied in many papers, including \cite{D2}, \cite{Moz1}, \cite{HLRV}.}

It is not surprising that some notions introduced for Kac-Moody algebras can be interpreted in terms of $2CY$ categories, since
Kac-Moody algebras arise naturally in the framework of $2CY$ categories generated by spherical collections a'la \cite{KoSo3}.
Spherical objects generate a  $t$-structure of a
$2CY$ category. Its heart corresponds to a Borel subalgebra, and the spherical
objects themselves correspond to simple roots. In a similar vein Schur objects correspond to positive roots, etc.
Change of the $t$-structure corresponds to the change of the cone of positive roots
(and hence to the change of Borel subalgebra in the corresponding Kac-Moody algebra). Reflections at
spherical objects correspond to the action of the Weyl group. Space of stability functions
(i.e. central charges) on a fixed $t$-structure corresponds to the Cartan subalgebra.
It contains the lattice which is dual to the $K$-theory classes of spherical objects.
The Euler bilinear form on the $K$-group of the category
(it is symmetric due to the $2CY$ condition) corresponds to the Killing form on the Cartan subalgebra.
The moduli stack of objects of the heart of the $t$-structure is symplectic, and
it contains a Lagrangian substack (in fact subvariety).
\footnote{This Lagrangian subvariety should be compared with Lusztig's Lagrangian nilpotent cone,
whose irreducible components correspond to positive roots. In the spirit of the analogy with Higgs bundles it is similar to the Laumon nilpotent cone.}

Notice that
every $2CY$ category gives rise to a $3CY$ category with the trivial Euler form. Geometrically, the underlying Calabi-Yau $3$-fold is the product
$S\times {\mathbb A}^1$, where $S$ is a Calabi-Yau surface.
Upgrade from $\Pi_Q$ to $(\widehat{Q},W)$ illustrates the algebraic side of the dictionary (in general the $3CY$ category should have a $t$-structure with the heart containing  pairs $(E,f)$, where $E$ is an object of the heart of a fixed $t$-structure of our $2CY$ category and $ f\in Hom(E,E)$).

Motivic  Donaldson-Thomas invariants of such ``upgraded" $3CY$ categories do not change inside
of a connected component of the space of stability conditions. As a result, the DT-invariants are
in fact invariants of the $t$-structure of the underlying $2CY$ category.

Another interesting question is the relation of the representation theory of COHA as described in \cite{So} to the one of the algebras acting on the cohomologies of the moduli spaces of instantons on ${\mathbb C}{\mathbb P}^2$ (AGT conjecture, etc.). In the Appendix written by Ben Davison the relation between COHA and cohomological Hall algebra of the commuting variety introduced in \cite{SV} is explained (in fact the relation takes place for a general quiver). On the other hand, representations of $W$-algebras used in the loc.cit. for the proof of the AGT conjecture are derived from the
cohomological Hall algebra of the commuting variety. We hope that the appearance of COHA in this story is not accidental and plan to study that topic in the future.

Finally, we remark that there are  $2CY$ categories which have purely geometric origin
(e.g. the category of coherent sheaves on a $K3$ surface).
The above questions about semicanonical basis or Kac polynomial make sense in the geometric case as well.

{\it Acknowledgements}. The idea of the paper goes back to the discussions of the second author (Y.S.)  with  Maxim Kontsevich during the conference on DT-invariants in Budapest in 2012 (the conference was sponsored by AIM). We are grateful to Maxim Kontsevich for sharing with us his ideas on the subject. We also thank to George Lusztig for encouraging us to continue the study of the relationship between
COHA and semicanonical bases.
Y.S.  thanks to IHES for excellent research conditions. His work was partially supported by an NSF grant.


\section{COHA and semicanonical basis}

We start by recalling the definition of the product on the critical COHA
proposed in \cite{KoSo3}. For the convenience of the reader we will closely
follow the very detailed exposition from \cite{D1} which contains
proofs of several statements  sketched in \cite{KoSo3} as well
as several useful improvements of the loc.cit.

\subsection{Reminder on the critical cohomology}

Let $Y$ be a complex manifold, and $f:Y\rightarrow\mathbb{C}$ a
holomorphic function. We define the vanishing cycles functor
$\varphi_f$ as follows:
$\varphi_{f}\mathcal{F}[-1]:=(R\Gamma_{\{Re(f)\leq0\}}\mathcal{F})_{f^{-1}(0)}$.
This is a nonstandard definition of this functor, which is
equivalent to the usual one in the complex case. From now on we will
abbreviate $R\mathscr{F}$ to $\mathscr{F}$ for any functor
$\mathscr{F}$. There is an isomorphism
$\mathbb{Q}_{Y}\otimes\mathbb{T}^{\dim Y}\rightarrow
D\mathbb{Q}_{Y}$, where ${\mathbb T}$ is the Tate motive, and $D$ is
the Verdier duality. The above isomorphism induces an isomorphism
\begin{equation}
\varphi_{f}\mathbb{Q}_{Y}\otimes\mathbb{T}^{\dim
Y}\rightarrow\varphi_{f} D\mathbb{Q}_{Y}.
\end{equation}
If $g:Y'\rightarrow Y$ is a map between manifolds, then the natural
transformation of functors $\Gamma_{\{Re(f)\leq0\}}\rightarrow
g_{*}\Gamma_{\{Re(fg)\leq0\}}g^{*}$ induces a natural transformation
\begin{equation}
\varphi_{f}\rightarrow g_{*}\varphi_{fg}g^{*}.
\end{equation}
Assume that $g$ is an affine fibration, then by
\cite[Cor.~2.3]{D1} there is a natural equivalence
\begin{equation}
\varphi_{f}g_{!}g_{*}\rightarrow g_{!}\varphi_{fg}g_{*}.
\end{equation}

\begin{defn}
For any submanifold $Y^{sp}\subset Y$, the critical cohomology with
compact support $H_{c}^{\bullet,crit}(Y^{sp},f)$ is defined as the
cohomology of the following object in
$\mathscr{D}^{b}(\textbf{MMHS})$ ($\textbf{MMHS}$ denotes the
category of monodromic mixed Hodge structures):
$$(\mathbb{C}^{\ast}\rightarrow\mathbb{A}^{1})_{!}(Y^{sp}\times\mathbb{C}^{\ast}\rightarrow\mathbb{C}^{\ast})_{!}(Y^{sp}\times\mathbb{C}^{\ast}\rightarrow Y\times\mathbb{C}^{\ast})^{\ast}\varphi_{\frac{f}{u}}\mathbb{Q}_{Y\times\mathbb{C}^{\ast}},$$
where u is the coordinate on $\mathbb{C}^{\ast}$.
\end{defn}

Let $Y=X\times\mathbb{A}^{n}$ be the total space of the trivial
vector bundle, endowed with the $\mathbb{C}^{*}$-action that acts
trivially on $X$ and with weight one on $\mathbb{A}^{n}$. Let $f:
Y\rightarrow\mathbb{A}^{1}$ be a $\mathbb{C}^{*}$-equivariant
holomorphic function, where $\mathbb{C}^{*}$ acts with weight one on
$\mathbb{A}^{1}$. Then $f=\sum_{k=1}^{k=n}f_{k}x_{k}$, where
$\{x_{k}, k=1,...,n\}$ is a linear coordinate system on
$\mathbb{A}^{n}$, and $f_{k}$ are functions on $X$. Let $Z\subset X$
be the reduced scheme which is the vanishing locus of all functions
$f_{k}$. Then $Z$ is independent of the choice of $x_{k}$. Let $\pi:
Y\rightarrow X$ be the natural projection, and $i: Z\rightarrow X$
be the closed inclusion.

\begin{thm}( see {\rm \cite[Cor.~A.6]{D1}})
There is a natural isomorphism of functors in
$\mathscr{D}^{b}(\textbf{MHM}(X))$:

$$\pi_{!}\varphi_{f}\pi^{*}\stackrel{\sim}{\longrightarrow}\pi_{!}\pi^{*}i_{*}i^{*}.$$

In particular,

$$H_{c}^{\bullet,crit}(Y,f)\simeq H_{c}^{\bullet}(Z\times\mathbb{A}^{n}, \mathbb{Q})\simeq H_{c}^{\bullet}(Z, \mathbb{Q})\otimes\mathbb{T}^{n}.$$
Here $\textbf{MHM}(X)$ denotes the category of mixed Hodge modules on $X$.
\end{thm}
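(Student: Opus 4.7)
Stratify $X = Z \sqcup U$ with $U := X\setminus Z$ open, and let $j\colon U\hookrightarrow X$, $i\colon Z\hookrightarrow X$, $j'\colon Y_U := U\times\AA^n \hookrightarrow Y$, $i'\colon Y_Z := Z\times\AA^n \hookrightarrow Y$, $\pi_U\colon Y_U\to U$ and $\pi'\colon Y_Z\to Z$ denote the induced inclusions and projections. The plan is to construct a natural transformation $\pi_!\varphi_f\pi^*\to \pi_!\pi^* i_*i^*$ and verify it is a quasi-isomorphism by restricting to the two strata.

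To build the map, first note that $df=\sum_k f_k\,dx_k+\sum_k x_k\,df_k$ forces $\Crit(f)\subset Y_Z$; since $\varphi_f\mathcal{G}$ is always supported on $\Crit(f)\cap f^{-1}(0)$, we get $\varphi_f\pi^*\mathcal{F} \simeq i'_* i'^*\varphi_f\pi^*\mathcal{F}$. Because $f|_{Y_Z}\equiv 0$ and $f$ is linear in the fiber coordinates, the definition $\varphi_f\mathcal{G}[-1]=(R\Gamma_{\{\Re f\leq 0\}}\mathcal{G})|_{f^{-1}(0)}$ produces, after integration along $\pi'$, a natural map $\pi'_! i'^*\varphi_f\pi^*\mathcal{F}\to i^*\mathcal{F}\otimes\mathbb{T}^n$. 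Combining this with proper base change $\pi_! i'_* \simeq i_*\pi'_!$ and the projection-formula identification $\pi_!\pi^* i_*i^*\mathcal{F}\simeq i_*i^*\mathcal{F}\otimes\mathbb{T}^n$ for the trivial $\AA^n$-bundle $\pi$ yields the candidate arrow $\pi_!\varphi_f\pi^*\mathcal{F}\to \pi_!\pi^* i_*i^*\mathcal{F}$.

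To check it is an isomorphism, I would check separately on $U$ and $Z$. \emph{Over $U$:} smooth base change gives $j^*\pi_!\varphi_f\pi^*\mathcal{F}\simeq \pi_{U!}\varphi_{f|_{Y_U}}\pi_U^* j^*\mathcal{F}$. At every point of $U$ some $f_k\neq 0$, so $f|_{Y_U}$ is a submersion; via a local change of fiber coordinates turning $f$ into $u_1$, Thom--Sebastiani reduces $\varphi_{f|_{Y_U}}\pi_U^* j^*\mathcal{F}$ to a box product involving $\varphi_{u_1}\mathbb{Q}_{\AA^1}=0$, giving zero, and the right-hand side trivially restricts to zero on $U$. \emph{Over $Z$:} proper base change gives $i^*\pi_!\varphi_f\pi^*\mathcal{F}\simeq \pi'_! i'^*\varphi_f\pi^*\mathcal{F}\simeq i^*\mathcal{F}\otimes\mathbb{T}^n$ by the identification of the previous paragraph, which matches $i^*(i_*i^*\mathcal{F}\otimes\mathbb{T}^n)$. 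Applying the distinguished triangle $j_!j^*\to\id\to i_*i^*$ to both sides of the candidate arrow then forces it to be a quasi-isomorphism. Setting $\mathcal{F}=\mathbb{Q}_X$ and taking compactly supported global cohomology recovers $H^{\bullet,crit}_c(Y,f)\simeq H^\bullet_c(Z,\mathbb{Q})\otimes\mathbb{T}^n$.

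The main difficulty is establishing the identification $\pi'_! i'^*\varphi_f\pi^*\mathcal{F}\simeq i^*\mathcal{F}\otimes\mathbb{T}^n$ used in paragraph two: this is the core content of ``dimensional reduction,'' stating that integrating the vanishing-cycles sheaf of $f$ along the $\AA^n$-fibers kills the nontrivial $v$-dependence coming from $df|_{Y_Z}=\sum v_k\,df_k(z)$ and leaves only a Tate-twisted copy of the pullback $i^*\mathcal{F}$. Carrying this out in the nonstandard convention for $\varphi_f$ of Section 2.1, while correctly tracking the resulting monodromic mixed Hodge structure and the compatibility with $\pi_!$, is the essential analytic input, corresponding to \cite[Cor.~A.6]{D1}.
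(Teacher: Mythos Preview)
The paper does not prove this theorem; it is quoted without proof as \cite[Cor.~A.6]{D1}, and even the appendix (Section~4.3) only invokes the result, restating it as the fact that the canonical map $\varphi_{Tr(W)}\mathbb{Q}[-1]\to\mathbb{Q}_{Tr(W)^{-1}(0)}$, restricted to $\mu^{-1}(0)\times\textbf{M}_{L,\gamma}$ and pushed forward along $\pi$, becomes an isomorphism. So there is no proof in the paper to compare against.

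That said, your outline is the right shape and matches the strategy in \cite{D1}: stratify by $Z$ and $U$, observe that over $U$ the function is a submersion so $\varphi_f$ vanishes, and reduce everything to the computation over $Z$. Two comments. First, the natural transformation you want is most cleanly built from the canonical map $\varphi_f\mathcal{G}[-1]\to\mathcal{G}|_{f^{-1}(0)}$ (arising from $R\Gamma_{\{\Re f\le 0\}}\to\id$), then restricting to $Y_Z\subset f^{-1}(0)$ and pushing down; your paragraph~2 gestures at this but the direction and construction are a bit murky as written. Second, and more importantly, you correctly flag that the identification $\pi'_! i'^*\varphi_f\pi^*\mathcal{F}\simeq i^*\mathcal{F}\otimes\mathbb{T}^n$ over $Z$ is the ``main difficulty,'' but you do not actually carry it out---you defer it back to \cite[Cor.~A.6]{D1}. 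Since that identification \emph{is} the theorem, your proposal is at present an outline that isolates the hard step rather than a proof. The actual argument in \cite{D1} uses a deformation/homotopy argument (contracting the fibre direction via the $\mathbb{C}^*$-action and analysing $R\Gamma_{\{\Re f\le 0\}}$ along it) to show the restricted canonical map is an isomorphism; filling that in is what remains.
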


If $Y_{i}=X_{i}\times\mathbb{A}^{n_{i}}$ with equivariant functions
$f_{i}$ satisfy the above conditions for $i=1, 2$, then we have

\begin{thm}(see {\rm \cite[Prop.~A.5]{D1}})

The following diagram of isomorphisms commutes:

$$\begin{xy} (0,20)*+{H_{c}^{\bullet,crit}(Y_{1}\times
Y_{2},f_{1}\boxplus f_{2})}="v1";
(80,20)*+{H_{c}^{\bullet,crit}(Y_{1},f_{1})\otimes
H_{c}^{\bullet,crit}(Y_{2},f_{2})}="v2";
(0,0)*+{H_{c}^{\bullet}(Z_{1}\times
Z_{2}\times\mathbb{A}^{n_{1}+n_{2}},\mathbb{Q})}="v3";
(80,0)*+{H_{c}^{\bullet}(Z_{1}\times\mathbb{A}^{n_{1}},\mathbb{Q})\otimes
H_{c}^{\bullet}(Z_{2}\times\mathbb{A}^{n_{2}},\mathbb{Q})}="v4";
{\ar@{->}^{TS} "v1";"v2"}; {\ar@{->}^{Ku} "v3";"v4"};{\ar@{->}
"v1";"v3"}; {\ar@{->} "v2";"v4"}
\end{xy}$$
Here $TS$ denotes the Thom-Sebastiani isomorphism, and $Ku$ the
K\"unneth isomorphism (see loc.cit.).
\end{thm}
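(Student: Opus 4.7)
The plan is to verify the commutativity of the diagram at the sheaf-theoretic level by refining it into a composition of natural-isomorphism squares, each of which is formal, and then applying $H_c^{\bullet}$. The key inputs are (i) the naturality of the Thom-Sebastiani and K\"unneth isomorphisms, (ii) the fact that the dimensional-reduction isomorphism of the previous theorem is a natural transformation of functors on $\mathscr{D}^{b}(\textbf{MHM}(X))$, and (iii) the fact that $(Y_1\times Y_2,f_1\boxplus f_2)$ again satisfies the hypotheses of that theorem, with the coefficient-vanishing locus being precisely $Z_1\times Z_2 \subset X_1\times X_2$.

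First I would unpack the two vertical arrows. Write $\pi_i:Y_i\to X_i$, $i_i:Z_i\hookrightarrow X_i$, and $\pi:Y_1\times Y_2\to X_1\times X_2$, $i:Z_1\times Z_2\hookrightarrow X_1\times X_2$. Using the identifications $\pi^{*}\mathbb{Q}_{X_1\times X_2}=\pi_1^{*}\mathbb{Q}_{X_1}\boxtimes\pi_2^{*}\mathbb{Q}_{X_2}$ and $i_{*}i^{*}\mathbb{Q}_{X_1\times X_2}=i_{1*}i_1^{*}\mathbb{Q}_{X_1}\boxtimes i_{2*}i_2^{*}\mathbb{Q}_{X_2}$ together with the compatibility of $\pi_{!}$ with external products (which is just the compactly-supported K\"unneth theorem applied fiberwise), both vertical maps become, after taking $H_c$, instances of the K\"unneth morphism; the top one is composed with Thom-Sebastiani, the bottom one is pure K\"unneth.

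Next I would expand the top vertical arrow as the composition
\begin{equation*}
\pi_{!}\varphi_{f_1\boxplus f_2}\pi^{*}\mathbb{Q}_{X_1\times X_2}
\xrightarrow{\sim} \pi_{!}\pi^{*}i_{*}i^{*}\mathbb{Q}_{X_1\times X_2}
\xrightarrow{\sim} \mathbb{Q}_{(Z_1\times Z_2)\times\mathbb{A}^{n_1+n_2}},
\end{equation*}
and the bottom vertical arrow as the analogous external product of the two single-factor instances of the isomorphism in the previous theorem. The crucial step is then the following lemma: the natural isomorphism $\pi_{!}\varphi_{f}\pi^{*}\xrightarrow{\sim}\pi_{!}\pi^{*}i_{*}i^{*}$ of \cite[Cor.~A.6]{D1} is compatible with external products, i.e.\ applied to $(Y_1\times Y_2,f_1\boxplus f_2)$ it coincides, under the identifications above and under the Thom-Sebastiani isomorphism for $\varphi_{f_1\boxplus f_2}$, with the external product of the two one-factor isomorphisms. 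Granting this lemma, the two squares that one obtains by factoring each vertical arrow through $\pi_{!}\pi^{*}i_{*}i^{*}$ commute: the upper square is exactly the compatibility lemma, and the lower square is the statement that the K\"unneth morphism is natural with respect to $\pi_{!}\pi^{*}$ applied to $i_{*}i^{*}$ of an external product, which is standard.

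The main obstacle is therefore the compatibility lemma. To prove it I would follow the construction of the equivalence in \cite[Cor.~A.6]{D1}: it is built from the natural transformation (3) for the affine fibration $\pi$ and from the vanishing $\varphi_{g}\mathbb{Q}=0$ for a nonzero linear function $g$ on an affine space (applied fiber-by-fiber outside $Z$) together with $\varphi_{0}\mathbb{Q}=\mathbb{Q}$ (applied over $Z$). Both ingredients are manifestly multiplicative under external products: the affine-fibration morphism (3) commutes with $\boxtimes$ by base-change formalism, and the fiberwise vanishing/identity statements are local on $X_1\times X_2$ and factor over the two coordinates. The Thom-Sebastiani isomorphism intertwines $\varphi_{f_1\boxplus f_2}$ with $\varphi_{f_1}\boxtimes\varphi_{f_2}$ by construction, and so turning the multiplicativity of the two ingredients into the compatibility of the constructed isomorphism is a diagram chase at the level of $\mathscr{D}^{b}(\textbf{MHM})$, with no additional geometric input. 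Finally, applying $H_c^{\bullet}$ and invoking the naturality of the auxiliary $\mathbb{C}^{*}$-twist in the definition of $H_c^{\bullet,crit}$ (which is itself compatible with external products, since all operations involved are $\boxtimes$-monoidal) yields the desired commutativity in $\mathscr{D}^{b}(\textbf{MMHS})$.
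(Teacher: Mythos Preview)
The paper does not give its own proof of this statement: it is quoted verbatim from \cite[Prop.~A.5]{D1}, so there is nothing in the present paper to compare your proposal against. Your sketch is the natural one, and it tracks what one expects the argument in \cite{D1} to be: reduce to checking that the functorial dimensional-reduction isomorphism $\pi_{!}\varphi_{f}\pi^{*}\xrightarrow{\sim}\pi_{!}\pi^{*}i_{*}i^{*}$ is compatible with external products, and observe that each ingredient in its construction (the affine-fibration transformation, the fiberwise identification of $\varphi$ on linear functions, and the Thom--Sebastiani isomorphism itself) is $\boxtimes$-monoidal.

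One small caveat: in \cite{D1} the compatibility statement (Prop.~A.5) is proved \emph{before} the dimensional-reduction corollary (Cor.~A.6), so there the logic runs the other way---the multiplicativity is established at an earlier stage of the construction rather than deduced from the final isomorphism. Your argument inverts this order by taking Theorem~2.2 as a black box and then checking its compatibility with products; this is logically fine (Theorem~2.2 is already available), but it means your ``compatibility lemma'' is really re-opening the proof of Theorem~2.2 to verify that each step is monoidal. That is exactly what \cite{D1} does, just organized differently. There is no genuine gap in your outline.
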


\begin{cor}(see {\rm \cite[Cor.~A.7]{D1}})

Let $X^{sp}\subset X$ be a subvariety of $X$ and
$Y^{sp}=X^{sp}\times\mathbb{A}^{n}$, $Z^{sp}=Z\cap X^{sp}$. There is
a natural isomorphism in \textbf{MMHS}

$$H_{c}^{\bullet,crit}(Y^{sp},f)\simeq H_{c}^{\bullet}(Z^{sp}\times\mathbb{A}^{n}, \mathbb{Q}).$$
\end{cor}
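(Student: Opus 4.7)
The plan is to deduce the corollary from Theorem 2.2 (the functor isomorphism $\pi_{!}\varphi_{f}\pi^{*}\xrightarrow{\sim}\pi_{!}\pi^{*}i_{*}i^{*}$ in $\mathscr{D}^{b}(\textbf{MHM}(X))$) by applying it to a mixed Hodge module supported on $X^{sp}$ and then taking compactly supported global sections. The earlier theorem is precisely the case $X^{sp}=X$, so this amounts to a relativization via base change.

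First, let $j\colon X^{sp}\hookrightarrow X$ denote the inclusion of the subvariety, and form the Cartesian square with $\tilde j\colon Y^{sp}\hookrightarrow Y$ and $\tilde\pi\colon Y^{sp}\to X^{sp}$. I would apply the isomorphism of Theorem 2.2 to the object $j_{!}\mathbb{Q}^{H}_{X^{sp}}\in\mathscr{D}^{b}(\textbf{MHM}(X))$ and push forward along $X\to\pt$, obtaining
$$H_{c}^{\bullet}(Y,\varphi_{f}\pi^{*}j_{!}\mathbb{Q}^{H}_{X^{sp}})\simeq H_{c}^{\bullet}(Y,\pi^{*}i_{*}i^{*}j_{!}\mathbb{Q}^{H}_{X^{sp}}).$$

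Next, I would simplify the left-hand side. By (trivial) base change one has $\pi^{*}j_{!}\simeq\tilde j_{!}\tilde\pi^{*}$, and since $\tilde j$ is a closed immersion, the vanishing cycles functor commutes with $\tilde j_{!}$ (using the natural transformation (2) in the paper, which is an isomorphism for closed immersions), giving $\varphi_{f}\tilde j_{!}\simeq\tilde j_{!}\varphi_{f|_{Y^{sp}}}$. Hence the LHS equals $H_{c}^{\bullet}(Y^{sp},\varphi_{f|_{Y^{sp}}}\mathbb{Q}^{H}_{Y^{sp}})$. Running the same reasoning for the $\mathbb{C}^{*}$-twisted setup $f/u$ on $Y\times\mathbb{C}^{*}$ that enters Definition 2.1, the LHS is naturally identified with $H_{c}^{\bullet,crit}(Y^{sp},f)$.

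For the right-hand side, base change on the Cartesian square formed by $i\colon Z\hookrightarrow X$ and $j$ gives $i^{*}j_{!}\mathbb{Q}^{H}_{X^{sp}}\simeq k_{!}\mathbb{Q}^{H}_{Z^{sp}}$, where $k\colon Z^{sp}\hookrightarrow Z$ is the inclusion of $Z^{sp}=Z\cap X^{sp}$. Therefore $i_{*}i^{*}j_{!}\mathbb{Q}^{H}_{X^{sp}}\simeq(i\circ k)_{!}\mathbb{Q}^{H}_{Z^{sp}}$, and by the projection formula applied to $\pi$ followed by pushing to a point, this yields $H_{c}^{\bullet}(Z^{sp}\times\mathbb{A}^{n},\mathbb{Q})$, matching the Tate twist exactly as in Theorem 2.2.

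The main obstacle will be ensuring that the commutation of $\varphi_{f}$ with the closed pushforward $\tilde j_{!}$ respects the $\mathbb{C}^{*}$-equivariant framework used in Definition 2.1, and that the passage to \textbf{MMHS} preserves all the natural transformations involved; these are standard functorial facts, but they need to be tracked carefully because $X^{sp}$ is only assumed to be a subvariety (possibly singular or non-closed in general, though the argument above was written for closed immersions; the locally closed case is handled by factoring $j$ as an open followed by a closed immersion and checking both steps).
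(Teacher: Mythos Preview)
There is a genuine gap in your identification of the left-hand side with $H_c^{\bullet,crit}(Y^{sp},f)$. Tracing your argument, you arrive at $H_c^\bullet\bigl(Y^{sp},\varphi_{f|_{Y^{sp}}}\mathbb{Q}_{Y^{sp}}\bigr)$. But Definition~2.1 defines $H_c^{\bullet,crit}(Y^{sp},f)$ via the \emph{restriction of the ambient vanishing cycle sheaf}: ignoring the $\mathbb{C}^*$-twist, it is $H_c^\bullet\bigl(Y^{sp},\tilde{j}^{*}\varphi_{f}\mathbb{Q}_{Y}\bigr)$. These two objects on $Y^{sp}$ differ in general. For a closed immersion $\tilde{j}$ one has $\varphi_{f}\tilde{j}_{*}\simeq\tilde{j}_{*}\varphi_{f\tilde{j}}$ (proper pushforward), but there is no companion identity $\tilde{j}^{*}\varphi_{f}\simeq\varphi_{f\tilde{j}}\tilde{j}^{*}$. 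A concrete counterexample: $Y=\mathbb{A}^{2}$, $f=xy$, $Y^{sp}=\{x=0\}$; then $f|_{Y^{sp}}\equiv 0$, so $\varphi_{f|_{Y^{sp}}}\mathbb{Q}_{Y^{sp}}$ is supported on all of $Y^{sp}$, whereas $\tilde{j}^{*}\varphi_{f}\mathbb{Q}_{Y}$ is supported only at the origin.

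The fix is simple and close in spirit to what you wrote: rather than evaluating the functor isomorphism of Theorem~2.2 on $j_{!}\mathbb{Q}_{X^{sp}}$, apply it to $\mathbb{Q}_{X}$ to get an isomorphism of objects in $\mathscr{D}^{b}(\mathbf{MHM}(X))$, and then restrict along $j^{*}$. Base change $j^{*}\pi_{!}\simeq\tilde{\pi}_{!}\tilde{j}^{*}$ turns the left side into $\tilde{\pi}_{!}\tilde{j}^{*}\varphi_{f}\mathbb{Q}_{Y}$, whose compactly supported cohomology is precisely $H_c^{\bullet,crit}(Y^{sp},f)$ as in Definition~2.1; the right side is handled by the same base-change identities you already recorded, yielding $H_c^{\bullet}(Z^{sp}\times\mathbb{A}^{n},\mathbb{Q})$. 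The paper itself supplies no proof here, deferring to \cite[Cor.~A.7]{D1}.
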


The above statements also hold in equivariant case. Let us recall
that framework. Assume that $Y$ is a $G$-equivariant vector bundle
over X, where $G$ is an algebraic group embedded in
$GL(n,\mathbb{C})$, and $f: Y\rightarrow\mathbb{A}^{1}$ is
$G$-invariant. Let $fr(n,N)$ be the space of $n$-tuples of linearly
independent vectors in $\mathbb{C}^{N}$ for $N\geqslant n$, and
$\overline{(Y,G)}_{N}:=Y\times_{G}fr(n,N)$. We denote the induced
function by $f_{N}: \overline{(Y,G)}_{N}\rightarrow\mathbb{A}^{1}$.
For  a $G$-invariant closed subset  $Y'\subset Y$, we define the
equivariant cohomology with compact support by
$H_{c,G}^{\bullet,crit}(Y',f):=\lim\limits_{N\rightarrow\infty}H_{c}^{\bullet,crit}(Y'_{N},f_{N})\otimes\mathbb{T}^{-dim(fr(n,N))}$,
where $Y'_{N}\subset\overline{(Y,G)}_{N}$ is the subspace of points
projected to $Y'$.

\begin{thm} (see {\rm \cite[Cor.~A.8]{D1}})
Let $Y^{sp}=X^{sp}\times\mathbb{A}^{n}$ be the total space
of a sub G-bundle. Then there is an isomorphism in \textbf{MMHS}

$$H_{c,G}^{\bullet,crit}(Y^{sp},f)\simeq H_{c,G}^{\bullet}(Z^{sp}\times\mathbb{A}^{n}, \mathbb{Q}).$$

Moreover, the following diagram of isomorphisms commutes:

$$\begin{xy} (0,20)*+{H_{c,G}^{\bullet,crit}(Y_{1}^{sp}\times
Y_{2}^{sp},f_{1}\boxplus f_{2})}="v1";
(80,20)*+{H_{c,G}^{\bullet,crit}(Y_{1}^{sp},f_{1})\otimes
H_{c,G}^{\bullet,crit}(Y_{2}^{sp},f_{2})}="v2";
(0,0)*+{H_{c,G}^{\bullet}(Z_{1}^{sp}\times
Z_{2}^{sp}\times\mathbb{A}^{n_{1}+n_{2}},\mathbb{Q})}="v3";
(80,0)*+{H_{c,G}^{\bullet}(Z_{1}^{sp}\times\mathbb{A}^{n_{1}},\mathbb{Q})\otimes
H_{c,G}^{\bullet}(Z_{2}^{sp}\times\mathbb{A}^{n_{2}},\mathbb{Q})}="v4";
{\ar@{->}^{TS} "v1";"v2"}; {\ar@{->}^{Ku} "v3";"v4"};{\ar@{->}
"v1";"v3"}; {\ar@{->} "v2";"v4"}
\end{xy}$$
\end{thm}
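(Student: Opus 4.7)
The overall plan is to reduce the equivariant statement to the non-equivariant statements already proved (i.e., Theorem/Corollary stated earlier in the subsection as \cite[Cor.~A.6, Cor.~A.7, Prop.~A.5]{D1}) by means of the Borel-style finite-dimensional approximation $\overline{(Y,G)}_N = Y \times_G fr(n,N)$, and then to pass to the limit $N\to\infty$. Since the equivariant critical cohomology is defined as this limit (with a Tate twist), it suffices to prove, for each sufficiently large $N$, an isomorphism
\[
H_c^{\bullet,crit}\bigl(Y^{sp}_N, f_N\bigr) \;\simeq\; H_c^\bullet\bigl(Z^{sp}_N \times_G fr(n,N),\,\mathbb{Q}\bigr)\otimes\mathbb{T}^n
\]
that is compatible with the maps $fr(n,N)\hookrightarrow fr(n,N+1)$ inducing the inverse system.

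At finite level $N$, the space $\overline{(Y,G)}_N$ is the total space of a rank-$n$ algebraic vector bundle over $\overline{(X,G)}_N$ (not necessarily trivial, but locally trivial in the Zariski topology, since $Y=X\times\mathbb{A}^n$ is a trivial $G$-bundle over $X$ in the equivariant sense). The function $f_N$ is fiberwise linear, cut out locally by the same pullbacks of the functions $f_k$. The strategy here is therefore to work Zariski-locally on the base: cover $\overline{(X,G)}_N$ by opens over which the vector bundle trivializes, apply Theorem~\cite[Cor.~A.6]{D1} on each piece to identify $\pi_!\varphi_{f_N}\pi^*$ with $\pi_!\pi^* i_* i^*$, and then glue, using the fact that both sides are computed by the same sheaf of complexes in $\mathscr{D}^b(\mathbf{MHM}(\overline{(X,G)}_N))$. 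Taking compactly supported hypercohomology and restricting to the preimage $Y^{sp}_N$ of the subvariety $X^{sp}_N = X^{sp}\times_G fr(n,N)$ then yields the Corollary \cite[Cor.~A.7]{D1} in the twisted setting.

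Next I would check that the isomorphisms at level $N$ commute with the stabilization maps. The inclusion $fr(n,N)\hookrightarrow fr(n,N+1)$ is an affine fibration with fiber an affine space of codimension $\dim fr(n,N+1)-\dim fr(n,N)$, and the induced map $\overline{(Y,G)}_N \to \overline{(Y,G)}_{N+1}$ is compatible with $f_N$ and $f_{N+1}$. The naturality of the identification in \cite[Cor.~A.6]{D1} with respect to smooth pullbacks, together with the affine fibration equivalence $\varphi_f g_! g_* \simeq g_!\varphi_{fg} g_*$ recalled at the start of the subsection, shows that both the critical and the ordinary side are compatibly shifted by the appropriate Tate twist, so the limit defining equivariant cohomology matches on both sides.

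For the commutativity of the Thom-Sebastiani/Künneth diagram, the plan is to apply Proposition \cite[Prop.~A.5]{D1} to the approximating spaces $\overline{(Y_i,G)}_N$ and to observe that the Thom-Sebastiani isomorphism is natural under smooth base change and compatible with external products of vector bundles. Hence the non-equivariant square commutes at each level $N$, and passing to the limit (with matching Tate twists on both sides) delivers the equivariant square. The main obstacle I expect is precisely this limit/compatibility bookkeeping: one must verify that the Tate twist $\mathbb{T}^{-\dim fr(n,N)}$ introduced in the definition of equivariant critical cohomology matches on both sides of the square, and that all the identifications are natural with respect to the transition maps. Once one trusts the non-equivariant results together with the naturality of vanishing cycles under smooth maps (in particular under affine fibrations), both assertions of the theorem follow formally.
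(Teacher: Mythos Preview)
The paper does not supply its own proof of this theorem: it is stated with the citation ``see \cite[Cor.~A.8]{D1}'' and no argument is given, exactly as with the preceding Theorems~2.2--2.3 and Corollary~2.4. So there is no proof in the paper to compare against; the result is imported wholesale from Davison's appendix.

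That said, your proposal is the natural one and is almost certainly what \cite{D1} does: the equivariant critical cohomology is \emph{defined} in the paper as the limit over the finite-dimensional approximations $\overline{(Y,G)}_N$, so reducing to the non-equivariant Corollary~A.6/A.7 and Proposition~A.5 at each level $N$ and then passing to the limit is essentially forced. Your observation that $\overline{(Y,G)}_N\to\overline{(X,G)}_N$ is only a locally trivial (associated) vector bundle, rather than a globally trivial one, is correct and is the one genuine point that needs care: the non-equivariant statement in Theorem~2.2 is phrased for $Y=X\times\mathbb{A}^n$. Your remedy---checking the sheaf-level isomorphism $\pi_!\varphi_{f_N}\pi^*\simeq \pi_!\pi^* i_* i^*$ Zariski-locally on $\overline{(X,G)}_N$ and gluing---is the right one, since both sides live in $\mathscr{D}^b(\mathbf{MHM})$ of the base and the isomorphism of Theorem~2.2 is natural. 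The remaining bookkeeping (compatibility with the stabilization maps and matching of Tate twists) is routine once you use the affine-fibration compatibility recalled at the start of the subsection.
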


\subsection{COHA  and preprojective algebras}

Let $Q$ be a quiver with the set of vertices $I$ and the set of arrows $\Omega$. We denote by
$a_{ij}\in\mathbb{Z}_{\geqslant0}$ the number of arrows from $i$ to
$j$ for $i, j\in I$.

One constructs the double quiver $\overline{Q}$, the preprojective
algebra $\Pi_{Q}$, and the triple quiver with potential
$(\widehat{Q}, W)$ as follows. For any arrow $a:i\rightarrow
j\in\Omega$, we add an inverse arrow $a^{*}:j\rightarrow i$ to $Q$
to get $\overline{Q}$, then
$\Pi_{Q}=\mathbb{C}\overline{Q}/\sum_{a\in\Omega}[a, a^{*}]$. Adding
loops $l_{i}: i\rightarrow i$ at each vertex $i\in I$ to
$\overline{Q}$ gives us the ``triple" quiver $\widehat{Q}$. It is
endowed with cubic  potential $W=\sum_{a\in\Omega}[a, a^{*}]l$,
where $l=\sum_{i\in I}l_{i}$. For any dimension vector
$\gamma=(\gamma^{i})_{i\in I}\in \mathbb{Z}^{I}_{\geqslant0}$ we
have the following algebraic varieties:

a) the space $\textbf{M}_{\overline{Q},\gamma}$ of representations
of the double quiver $\overline{Q}$  in coordinate spaces
$({\mathbb C}^{\gamma^i})_{i\in I}$ ;

b) the similar space of representations $\textbf{M}_{\Pi_{Q},\gamma}$ of
$\Pi_{Q}$;

c) the similar space of representations $\textbf{M}_{\widehat{Q},\gamma}$  of
$\widehat{Q}$.

All these spaces of representations are endowed with the
action by conjugation of the complex algebraic group
$\textbf{G}_{\gamma}=\prod_{i\in I}GL(\gamma^{i}, \mathbb{C})$.

Let
$\chi_{Q}(\gamma_{1},\gamma_{2})=\chi(Ext^{\bullet}(x_{1},x_{2}))=-\sum\limits_{i,j\in
I}a_{ij}\gamma_{1}^{i}\gamma_{2}^{j}+\sum\limits_{i\in
I}\gamma_{1}^{i}\gamma_{2}^{i}$ be the Euler form on the $K_{0}$
group of the category of finite dimensional representations of $Q$,
where $x_{1}$ and $x_{2}$ are arbitrary representations of $Q$ of
dimension vectors $\gamma_{1}$ and $\gamma_{2}$ respectively.

In the context of the previous section, let
$X=\textbf{M}_{\overline{Q},\gamma}$,
$Y=\textbf{M}_{\widehat{Q},\gamma}=\textbf{M}_{\overline{Q},\gamma}\times\mathbb{A}^{\gamma\cdot\gamma}$
(dot denotes the inner product), and
$f=Tr(W)_{\gamma}=\sum\limits_{i\in
I,k=1,\ldots,(\gamma^{i})^{2}}f_{ik}x_{ik}$, where $f_{ik}$ are
functions on $\textbf{M}_{\overline{Q},\gamma}$, and $\{x_{ik}\}$ is
a linear coordinate system on $\mathbb{A}^{\gamma\cdot\gamma}$. Then
$Z=\textbf{M}_{\Pi_{Q},\gamma}$. Denote by
$\textbf{M}_{\Pi_{Q},\gamma_{1},\gamma_{2}}$ the space of
representations of $\overline{Q}$ in coordinate spaces of dimension
$\gamma_{1}+\gamma_{2}$ such that the standard coordinate subspaces
of dimension $\gamma_{1}$ form a subrepresentation, and the
restriction of $\rho\in\textbf{M}_{\Pi_{Q},\gamma_{1},\gamma_{2}}$
on the block-diagonal part is an element in
$\textbf{M}_{\Pi_{Q},\gamma_1}\times\textbf{M}_{\Pi_{Q},\gamma_2}$.
The group
$\textbf{G}_{\gamma_{1},\gamma_{2}}\subset\textbf{G}_{\gamma}$
consisting of transformations preserving subspaces
$(\mathbb{C}^{\gamma_{1}^{i}}\subset\mathbb{C}^{\gamma^{i}})_{i\in
I}$ acts on $\textbf{M}_{\Pi_{Q},\gamma_{1},\gamma_{2}}$. Suppose
that we are given a collection of $\textbf{G}_{\gamma}$-invariant
closed subsets
$\textbf{M}_{\overline{Q},\gamma}^{sp}\subset\textbf{M}_{\overline{Q},\gamma}$
satisfying the following condition: for any short exact sequence
$0\rightarrow E_{1}\rightarrow E\rightarrow E_{2}\rightarrow0$ of
representations of $\overline{Q}$ with dimension vectors
$\gamma_{1}, \gamma:=\gamma_{1}+\gamma_{2}, \gamma_{2}$
respectively, $E\in\textbf{M}_{\overline{Q},\gamma}^{sp}$ if and
only if $E_{1}\in\textbf{M}_{\overline{Q},\gamma_{1}}^{sp}$, and
$E_{2}\in\textbf{M}_{\overline{Q},\gamma_{2}}^{sp}$. Then
$\textbf{M}_{\widehat{Q},\gamma}^{sp}=\textbf{M}_{\overline{Q},\gamma}^{sp}\times\mathbb{A}^{\gamma\cdot\gamma}$.

The COHA of $(\widehat{Q},W)$ (see {\cite{KoSo3}}) induces the
coproduct on the vector space
$\bigoplus\limits_{\gamma\in\mathbb{Z}^{I}_{\geqslant0}}H_{c,\textbf{G}_{\gamma}}^{\bullet}(\textbf{M}_{\Pi_{Q},\gamma}^{sp},\mathbb{Q})$
as follows:

\begin{itemize}
\item[$\bullet$]$H_{c,\textbf{G}_{\gamma}}^{\bullet}(\textbf{M}_{\Pi_{Q},\gamma}^{sp},\mathbb{Q})\rightarrow
H_{c,\textbf{G}_{\gamma_{1},\gamma_{2}}}^{\bullet}(\textbf{M}_{\Pi_{Q},\gamma}^{sp},\mathbb{Q})$,
which is the pullback associated to the closed embedding of groups
$\textbf{G}_{\gamma_{1},\gamma_{2}}\rightarrow\textbf{G}_{\gamma}$
with proper quotient. The projections $pr_{\gamma_{1},\gamma_{2},N}:
\overline{(\textbf{M}_{\widehat{Q},\gamma},\textbf{G}_{\gamma_{1},\gamma_{2}})}_{N}\rightarrow\overline{(\textbf{M}_{\widehat{Q},\gamma},\textbf{G}_{\gamma})}_{N}$
induce natural transformations of functors
$\varphi_{\gamma,N}\rightarrow(pr_{\gamma_{1},\gamma_{2},N})_{!}\varphi_{\gamma,\gamma_{1},\gamma_{2},N}(pr_{\gamma_{1},\gamma_{2},N})^{*}$
by (2) and properness of $pr_{\gamma_{1},\gamma_{2},N}$, thus give
us
$$(\pi_{\gamma,N})_{!}\varphi_{\gamma/u,N}(\pi_{\gamma,N})^{*}[-1]\rightarrow(\pi_{\gamma,N})_{!}(pr_{\gamma_{1},\gamma_{2},N})_{!}\varphi_{(\gamma,\gamma_{1},\gamma_{2})/u,N}(pr_{\gamma_{1},\gamma_{2},N})^{*}(\pi_{\gamma,N})^{*}[-1].$$

Here $\varphi_{\gamma,N}=\varphi_{Tr(W)_{\gamma,N}}$ is the
vanishing cycles functor of the function $tr(W)_{\gamma,N}$ on
$\overline{(\textbf{M}_{\widehat{Q},\gamma},\textbf{G}_{\gamma})}_{N}$,
and $\varphi_{\gamma,\gamma_{1},\gamma_{2},N}$ corresponds to
$Tr(W)_{\gamma,\gamma_{1},\gamma_{2},N}$ on
$\overline{(\textbf{M}_{\widehat{Q},\gamma},\textbf{G}_{\gamma_{1},\gamma_{2}})}_{N}$.
(Note that in subscript of
$\varphi_{\gamma,\gamma_{1},\gamma_{2},N}$, $\gamma$ indicates the
dimension vector of $\textbf{M}_{\widehat{Q},\gamma}$, and
$\gamma_{1},\gamma_{2}$ indicate those of
$\textbf{G}_{\gamma_{1},\gamma_{2}}$. We will use similar notations
in the subsequent steps.)

Since the following diagram commutes: $$\begin{xy}
(0,20)*+{\overline{(\textbf{M}_{\widehat{Q},\gamma},\textbf{G}_{\gamma_{1},\gamma_{2}})}_{N}\times\mathbb{C}^{*}}="v1";
(80,20)*+{\overline{(\textbf{M}_{\widehat{Q},\gamma},\textbf{G}_{\gamma})}_{N}\times\mathbb{C}^{*}}="v2";
(0,0)*+{\overline{(\textbf{M}_{\overline{Q},\gamma},\textbf{G}_{\gamma_{1},\gamma_{2}})}_{N}\times\mathbb{C}^{*}}="v3";
(80,0)*+{\overline{(\textbf{M}_{\overline{Q},\gamma},\textbf{G}_{\gamma})}_{N}\times\mathbb{C}^{*}}="v4";
{\ar@{->}^{pr_{\gamma_{1},\gamma_{2},N}} "v1";"v2"};
{\ar@{->}^{pr_{\overline{Q},\gamma_{1},\gamma_{2},N}}
"v3";"v4"};{\ar@{->}^{\pi_{\gamma,\gamma_{1},\gamma_{2},N}}
"v1";"v3"}; {\ar@{->}^{\pi_{\gamma,N}} "v2";"v4"}
\end{xy}$$
we have
\begin{equation}
\begin{array}{ll}
&(\pi_{\gamma,N})_{!}(pr_{\gamma_{1},\gamma_{2},N})_{!}\varphi_{(\gamma,\gamma_{1},\gamma_{2})/u,N}(pr_{\gamma_{1},\gamma_{2},N})^{*}(\pi_{\gamma,N})^{*}[-1]
\\\simeq&(pr_{\overline{Q},\gamma_{1},\gamma_{2},N})_{!}(\pi_{\gamma,\gamma_{1},\gamma_{2},N})_{!}\varphi_{(\gamma,\gamma_{1},\gamma_{2})/u,N}(\pi_{\gamma,\gamma_{1},\gamma_{2},N})^{*}(pr_{\overline{Q},\gamma_{1},\gamma_{2},N})^{*}[-1].
\end{array}\nonumber
\end{equation}
By Theorem 2.2, we have two isomorphisms:
$$(\pi_{\gamma,N})_{!}\varphi_{\gamma/u,N}(\pi_{\gamma,N})^{*}[-1]\simeq(\pi_{\gamma,N})_{!}(\pi_{\gamma,N})^{*}(i_{\gamma,N})_{*}(i_{\gamma,N})^{*}$$
and
\begin{equation}
\begin{array}{ll}
&(pr_{\overline{Q},\gamma_{1},\gamma_{2},N})_{!}(\pi_{\gamma,\gamma_{1},\gamma_{2},N})_{!}\varphi_{(\gamma,\gamma_{1},\gamma_{2})/u,N}(\pi_{\gamma,\gamma_{1},\gamma_{2},N})^{*}(pr_{\overline{Q},\gamma_{1},\gamma_{2},N})^{*}[-1]
\\\simeq&(pr_{\overline{Q},\gamma_{1},\gamma_{2},N})_{!}(\pi_{\gamma,\gamma_{1},\gamma_{2},N})_{!}(\pi_{\gamma,\gamma_{1},\gamma_{2},N})^{*}(i_{\gamma,\gamma_{1},\gamma_{2},N})_{*}(i_{\gamma,\gamma_{1},\gamma_{2},N})^{*}(pr_{\overline{Q},\gamma_{1},\gamma_{2},N})^{*}.
\end{array}\nonumber
\end{equation}

Here $i_{\gamma,N}$ and $i_{\gamma,\gamma_{1},\gamma_{2},N}$ are
inclusions, and the subscripts have the same meaning as the
vanishing cycles functors above.

Pulling back to
$\textbf{M}_{\overline{Q},\gamma,N}^{sp}\times\mathbb{C}^{*}$ gives
us the commutative diagram
$$\begin{xy}
(0,20)*+{H_{c,\textbf{G}_{\gamma}}^{\bullet,crit}(\textbf{M}_{\widehat{Q},\gamma}^{sp},W_{\gamma})}="v1";
(60,20)*+{H_{c,\textbf{G}_{\gamma_{1},\gamma_{2}}}^{\bullet,crit}(\textbf{M}_{\widehat{Q},\gamma}^{sp},W_{\gamma})}="v2";
(0,0)*+{H_{c,\textbf{G}_{\gamma}}^{\bullet}(\textbf{M}_{\Pi_{Q},\gamma}^{sp},\mathbb{Q})\otimes\mathbb{T}^{\gamma\cdot\gamma}}="v3";
(60,0)*+{H_{c,\textbf{G}_{\gamma_{1},\gamma_{2}}}^{\bullet}(\textbf{M}_{\Pi_{Q},\gamma}^{sp},\mathbb{Q})\otimes\mathbb{T}^{\gamma\cdot\gamma}}="v4";
{\ar@{->} "v1";"v2"}; {\ar@{->} "v3";"v4"}; {\ar@{->}^{\wr}
"v1";"v3"}; {\ar@{->}^{\wr} "v2";"v4"}
\end{xy}$$

\item[$\bullet$]$H_{c,\textbf{G}_{\gamma_{1},\gamma_{2}}}^{\bullet}(\textbf{M}_{\Pi_{Q},\gamma}^{sp},\mathbb{Q})\rightarrow
H_{c,\textbf{G}_{\gamma_{1},\gamma_{2}}}^{\bullet}(\widetilde{\textbf{M}}_{\Pi_{Q},\gamma_{1},\gamma_{2}}^{sp},\mathbb{Q})\otimes\mathbb{T}^{-\gamma_{1}\cdot\gamma_{2}}$,
where
$\textbf{M}_{\Pi_{Q},\gamma_{1},\gamma_{2}}^{sp}\\=\textbf{M}_{\Pi_{Q},\gamma}^{sp}\cap\textbf{M}_{\Pi_{Q},\gamma_{1},\gamma_{2}}$,
and
$\widetilde{\textbf{M}}_{\Pi_{Q},\gamma_{1},\gamma_{2}}^{sp}\subset\textbf{M}_{\Pi_{Q},\gamma_{1},\gamma_{2}}$
is the pullback of
$\textbf{M}_{\Pi_{Q},\gamma_{1}}^{sp}\times\textbf{M}_{\Pi_{Q},\gamma_{2}}^{sp}$
under the projection
$\textbf{M}_{\Pi_{Q},\gamma_{1},\gamma_{2}}\rightarrow\textbf{M}_{\Pi_{Q},\gamma_{1}}\times\textbf{M}_{\Pi_{Q},\gamma_{2}}$.
This is the pullback associated to the closed embedding
$\textbf{M}_{\Pi_{Q},\gamma_{1},\gamma_{2}}\rightarrow\textbf{M}_{\Pi_{Q},\gamma}$.
The inclusions $j_{\gamma_{1},\gamma_{2},N}:
\overline{(\textbf{M}_{\widehat{Q},\gamma_{1},\gamma_{2}},\textbf{G}_{\gamma_{1},\gamma_{2}})}_{N}\rightarrow\overline{(\textbf{M}_{\widehat{Q},\gamma},\textbf{G}_{\gamma_{1},\gamma_{2}})}_{N}$
induce natural transformations of functors
$\varphi_{\gamma,\gamma_{1},\gamma_{2},N}\rightarrow(j_{\gamma_{1},\gamma_{2},N})_{*}\varphi_{\gamma_{1},\gamma_{2},N}(j_{\gamma_{1},\gamma_{2},N})^{*}$
by (2). So we have
\begin{equation}
\begin{array}{ll}
&(\pi_{\gamma,\gamma_{1},\gamma_{2},N})_{!}\varphi_{(\gamma,\gamma_{1},\gamma_{2})/u,N}(\pi_{\gamma,\gamma_{1},\gamma_{2},N})^{*}[-1]
\\\rightarrow&(\pi_{\gamma,\gamma_{1},\gamma_{2},N})_{!}(j_{\gamma_{1},\gamma_{2},N})_{*}\varphi_{(\gamma_{1},\gamma_{2})/u,N}(j_{\gamma_{1},\gamma_{2},N})^{*}(\pi_{\gamma,\gamma_{1},\gamma_{2},N})^{*}[-1].
\end{array}\nonumber
\end{equation}
By the commutative diagram $$\begin{xy}
(0,20)*+{\overline{(\textbf{M}_{\widehat{Q},\gamma_{1},\gamma_{2}},\textbf{G}_{\gamma_{1},\gamma_{2}})}_{N}\times\mathbb{C}^{*}}="v1";
(80,20)*+{\overline{(\textbf{M}_{\widehat{Q},\gamma},\textbf{G}_{\gamma_{1},\gamma_{2}})}_{N}\times\mathbb{C}^{*}}="v2";
(0,0)*+{\overline{(\textbf{M}_{\overline{Q},\gamma_{1},\gamma_{2}},\textbf{G}_{\gamma_{1},\gamma_{2}})}_{N}\times\mathbb{C}^{*}}="v3";
(80,0)*+{\overline{(\textbf{M}_{\overline{Q},\gamma},\textbf{G}_{\gamma_{1},\gamma_{2}})}_{N}\times\mathbb{C}^{*}}="v4";
{\ar@{->}^{j_{\gamma_{1},\gamma_{2},N}} "v1";"v2"};
{\ar@{->}^{j_{\overline{Q},\gamma_{1},\gamma_{2},N}}
"v3";"v4"};{\ar@{->}^{\pi_{\gamma_{1},\gamma_{2},N}} "v1";"v3"};
{\ar@{->}^{\pi_{\gamma,\gamma_{1},\gamma_{2},N}} "v2";"v4"}
\end{xy}$$
we have
\begin{equation}
\begin{array}{ll}
&(\pi_{\gamma,\gamma_{1},\gamma_{2},N})_{!}(j_{\gamma_{1},\gamma_{2},N})_{*}\varphi_{(\gamma_{1},\gamma_{2})/u,N}(j_{\gamma_{1},\gamma_{2},N})^{*}(\pi_{\gamma,\gamma_{1},\gamma_{2},N})^{*}[-1]
\\\simeq&(j_{\overline{Q},\gamma_{1},\gamma_{2},N})_{*}(\pi_{\gamma_{1},\gamma_{2},N})_{!}\varphi_{(\gamma_{1},\gamma_{2})/u,N}(\pi_{\gamma_{1},\gamma_{2},N})^{*}(j_{\overline{Q},\gamma_{1},\gamma_{2},N})^{*}[-1].
\end{array}\nonumber
\end{equation}
Then the isomorphisms
\begin{equation}
\begin{array}{ll}
&(\pi_{\gamma,\gamma_{1},\gamma_{2},N})_{!}\varphi_{(\gamma,\gamma_{1},\gamma_{2})/u,N}(\pi_{\gamma,\gamma_{1},\gamma_{2},N})^{*}[-1]
\\\simeq&(\pi_{\gamma,\gamma_{1},\gamma_{2},N})_{!}(\pi_{\gamma,\gamma_{1},\gamma_{2},N})^{*}(i_{\gamma,\gamma_{1},\gamma_{2},N})_{*}(i_{\gamma,\gamma_{1},\gamma_{2},N})^{*}
\end{array}\nonumber
\end{equation}
and
\begin{equation}
\begin{array}{ll}
&(j_{\overline{Q},\gamma_{1},\gamma_{2},N})_{*}(\pi_{\gamma_{1},\gamma_{2},N})_{!}\varphi_{(\gamma_{1},\gamma_{2})/u,N}(\pi_{\gamma_{1},\gamma_{2},N})^{*}(j_{\overline{Q},\gamma_{1},\gamma_{2},N})^{*}[-1]
\\\simeq&(j_{\overline{Q},\gamma_{1},\gamma_{2},N})_{*}(\pi_{\gamma_{1},\gamma_{2},N})_{!}(\pi_{\gamma_{1},\gamma_{2},N})^{*}(i_{\gamma_{1},\gamma_{2},N})_{*}(i_{\gamma_{1},\gamma_{2},N})^{*}(j_{\overline{Q},\gamma_{1},\gamma_{2},N})^{*}
\end{array}\nonumber
\end{equation}
obtained from the theorem give us the commutative diagram by pulling
back to
$\textbf{M}_{\overline{Q},\gamma,\gamma_{1},\gamma_{2},N}^{sp}\times\mathbb{C}^{*}$:
$$\begin{xy}
(0,20)*+{H_{c,\textbf{G}_{\gamma_{1},\gamma_{2}}}^{\bullet,crit}(\textbf{M}_{\widehat{Q},\gamma}^{sp},W_{\gamma})}="v1";
(100,20)*+{H_{c,\textbf{G}_{\gamma_{1},\gamma_{2}}}^{\bullet,crit}(\widetilde{\textbf{M}}_{\widehat{Q},\gamma_{1},\gamma_{2}}^{sp},W_{\gamma_{1},\gamma_{2}})}="v2";
(0,0)*+{H_{c,\textbf{G}_{\gamma_{1},\gamma_{2}}}^{\bullet}(\textbf{M}_{\Pi_{Q},\gamma}^{sp},\mathbb{Q})\otimes\mathbb{T}^{\gamma\cdot\gamma}}="v3";
(100,0)*+{H_{c,\textbf{G}_{\gamma_{1},\gamma_{2}}}^{\bullet}(\widetilde{\textbf{M}}_{\Pi_{Q},\gamma_{1},\gamma_{2}}^{sp},\mathbb{Q})\otimes\mathbb{T}^{l_{1}}}="v4";
(20,40)*+{H_{c,\textbf{G}_{\gamma_{1},\gamma_{2}}}^{\bullet,crit}(\textbf{M}_{\widehat{Q},\gamma_{1},\gamma_{2}}^{sp},W_{\gamma})}="v5";
(80,40)*{H_{c,\textbf{G}_{\gamma_{1},\gamma_{2}}}^{\bullet,crit}(\widetilde{\textbf{M}}_{\widehat{Q},\gamma_{1},\gamma_{2}}^{sp},W_{\gamma})}="v6";
{\ar@{->} "v3";"v4"}; {\ar@{->}^{\wr} "v1";"v3"}; {\ar@{->}^{\wr}
"v2";"v4"}; {\ar@{->} "v1";"v5"}; {\ar@{->}^{\thicksim} "v5";"v6"};
{\ar@{->} "v6";"v2"}
\end{xy}$$

where $l_{1}=\gamma\cdot\gamma-\gamma_{1}\cdot\gamma_{2}$.

\item[$\bullet$]$H_{c,\textbf{G}_{\gamma_{1},\gamma_{2}}}^{\bullet}(\widetilde{\textbf{M}}_{\Pi_{Q},\gamma_{1},\gamma_{2}}^{sp},\mathbb{Q})\stackrel{\sim}{\longrightarrow}
H_{c,\textbf{G}_{\gamma_{1}}\times
\textbf{G}_{\gamma_{2}}}^{\bullet}(\widetilde{\textbf{M}}_{\Pi_{Q},\gamma_{1},\gamma_{2}}^{sp},\mathbb{Q})\otimes\mathbb{T}^{-\gamma_{1}\cdot\gamma_{2}}$.
The affine fibrations $q_{\gamma_{1},\gamma_{2},N}:
\overline{(\textbf{M}_{\widehat{Q},\gamma_{1},\gamma_{2}},\textbf{G}_{\gamma_{1}}\times
\textbf{G}_{\gamma_{2}})}_{N}\rightarrow\overline{(\textbf{M}_{\widehat{Q},\gamma_{1},\gamma_{2}},\textbf{G}_{\gamma_{1},\gamma_{2}})}_{N}$
induce isomorphisms
$\varphi_{(\gamma_{1},\gamma_{2})/u,N}(\mathbb{Q}_{\overline{(\textbf{M}_{\widehat{Q},\gamma_{1},\gamma_{2}},\textbf{G}_{\gamma_{1},\gamma_{2}})}_{N}\times\mathbb{C}^{*}}
\\\stackrel{\sim}{\longrightarrow}(q_{\gamma_{1},\gamma_{2},N})_{*}\mathbb{Q}_{\overline{(\textbf{M}_{\widehat{Q},\gamma_{1},\gamma_{2}},\textbf{G}_{\gamma_{1}}\times
\textbf{G}_{\gamma_{2}})}_{N}\times\mathbb{C}^{*}})$. By applying
Verdier duality we get
$\varphi_{(\gamma_{1},\gamma_{2})/u,N}((q_{\gamma_{1},\gamma_{2},N})_{!}D\mathbb{Q}_{\overline{(\textbf{M}_{\widehat{Q},\gamma_{1},\gamma_{2}},\textbf{G}_{\gamma_{1}}\times
\textbf{G}_{\gamma_{2}})}_{N}\times\mathbb{C}^{*}}\stackrel{\sim}{\longrightarrow}D\mathbb{Q}_{\overline{(\textbf{M}_{\widehat{Q},\gamma_{1},\gamma_{2}},\textbf{G}_{\gamma_{1},\gamma_{2}})}_{N}\times\mathbb{C}^{*}})$.
Then
$\varphi_{(\gamma_{1},\gamma_{2})/u,N}((q_{\gamma_{1},\gamma_{2},N})_{!}\mathbb{Q}_{\overline{(\textbf{M}_{\widehat{Q},\gamma_{1},\gamma_{2}},\textbf{G}_{\gamma_{1}}\times
\textbf{G}_{\gamma_{2}})}_{N}\times\mathbb{C}^{*}}\stackrel{\sim}{\longrightarrow}\mathbb{Q}_{\overline{(\textbf{M}_{\widehat{Q},\gamma_{1},\gamma_{2}},\textbf{G}_{\gamma_{1},\gamma_{2}})}_{N}\times\mathbb{C}^{*}}\otimes\mathbb{T}^{\gamma_{1}\cdot\gamma_{2}})$
by (1), and
$(q_{\gamma_{1},\gamma_{2},N})_{!}\varphi_{(\gamma_{1},\gamma_{2},\gamma_{1}\times\gamma_{2})/u,N}\mathbb{Q}_{\overline{(\textbf{M}_{\widehat{Q},\gamma_{1},\gamma_{2}},\textbf{G}_{\gamma_{1}}\times
\textbf{G}_{\gamma_{2}})}_{N}\times\mathbb{C}^{*}}\\\simeq(q_{\gamma_{1},\gamma_{2},N})_{!}\varphi_{(\gamma_{1},\gamma_{2},\gamma_{1}\times\gamma_{2})/u,N}(q_{\gamma_{1},\gamma_{2},N})^{*}\mathbb{Q}_{\overline{(\textbf{M}_{\widehat{Q},\gamma_{1},\gamma_{2}},\textbf{G}_{\gamma_{1},\gamma_{2}})}_{N}\times\mathbb{C}^{*}}\\\stackrel{\sim}{\longrightarrow}\varphi_{(\gamma_{1},\gamma_{2})/u,N}(\mathbb{Q}_{\overline{(\textbf{M}_{\widehat{Q},\gamma_{1},\gamma_{2}},\textbf{G}_{\gamma_{1},\gamma_{2}})}_{N}\times\mathbb{C}^{*}}\otimes\mathbb{T}^{\gamma_{1}\cdot\gamma_{2}})$
by (3). Then we have isomorphisms
\begin{equation}
\begin{array}{ll}
&(\pi_{\gamma_{1},\gamma_{2},N})_{!}\varphi_{(\gamma_{1},\gamma_{2})/u,N}(\pi_{\gamma_{1},\gamma_{2},N})^{*}(\mathbb{Q}_{\overline{(\textbf{M}_{\overline{Q},\gamma_{1},\gamma_{2}},\textbf{G}_{\gamma_{1},\gamma_{2}})}_{N}\times\mathbb{C}^{*}}\otimes\mathbb{T}^{\gamma_{1}\cdot\gamma_{2}})
\\\rightarrow&(\pi_{\gamma_{1},\gamma_{2},N})_{!}(q_{\gamma_{1},\gamma_{2},N})_{!}\varphi_{(\gamma_{1},\gamma_{2},\gamma_{1}\times\gamma_{2})/u,N}(q_{\gamma_{1},\gamma_{2},N})^{*}(\pi_{\gamma_{1},\gamma_{2},N})^{*}\mathbb{Q}_{\overline{(\textbf{M}_{\overline{Q},\gamma_{1},\gamma_{2}},\textbf{G}_{\gamma_{1},\gamma_{2}})}_{N}\times\mathbb{C}^{*}}.
\end{array}\nonumber
\end{equation}
The commutative diagram $$\begin{xy}
(0,20)*+{\overline{(\textbf{M}_{\widehat{Q},\gamma_{1},\gamma_{2}},\textbf{G}_{\gamma_{1}}\times
\textbf{G}_{\gamma_{2}})}_{N}\times\mathbb{C}^{*}}="v1";
(80,20)*+{\overline{(\textbf{M}_{\widehat{Q},\gamma_{1},\gamma_{2}},\textbf{G}_{\gamma_{1},\gamma_{2}})}_{N}\times\mathbb{C}^{*}}="v2";
(0,0)*+{\overline{(\textbf{M}_{\overline{Q},\gamma_{1},\gamma_{2}},\textbf{G}_{\gamma_{1}}\times
\textbf{G}_{\gamma_{2}})}_{N}\times\mathbb{C}^{*}}="v3";
(80,0)*+{\overline{(\textbf{M}_{\overline{Q},\gamma_{1},\gamma_{2}},\textbf{G}_{\gamma_{1},\gamma_{2}})}_{N}\times\mathbb{C}^{*}}="v4";
{\ar@{->}^{q_{\gamma_{1},\gamma_{2},N}} "v1";"v2"};
{\ar@{->}^{q_{\overline{Q},\gamma_{1},\gamma_{2},N}}
"v3";"v4"};{\ar@{->}^{\pi_{\gamma_{1},\gamma_{2},\gamma_{1}\times\gamma_{2},N}}
"v1";"v3"}; {\ar@{->}^{\pi_{\gamma_{1},\gamma_{2},N}} "v2";"v4"}
\end{xy}$$
gives us isomorphisms
\begin{equation}
\begin{array}{ll}
&(\pi_{\gamma_{1},\gamma_{2},N})_{!}(q_{\gamma_{1},\gamma_{2},N})_{!}\varphi_{(\gamma_{1},\gamma_{2},\gamma_{1}\times\gamma_{2})/u,N}(q_{\gamma_{1},\gamma_{2},N})^{*}(\pi_{\gamma_{1},\gamma_{2},N})^{*}[-1]
\\\simeq&(q_{\overline{Q},\gamma_{1},\gamma_{2},N})_{!}(\pi_{\gamma_{1},\gamma_{2},\gamma_{1}\times\gamma_{2},N})_{!}\varphi_{(\gamma_{1},\gamma_{2},\gamma_{1}\times\gamma_{2})/u,N}(\pi_{\gamma_{1},\gamma_{2},\gamma_{1}\times\gamma_{2},N})^{*}(q_{\overline{Q},\gamma_{1},\gamma_{2},N})^{*}[-1].
\end{array}\nonumber
\end{equation}
Theorem 2.2 implies isomorphisms
$$(\pi_{\gamma_{1},\gamma_{2},N})_{!}\varphi_{(\gamma_{1},\gamma_{2})/u,N}(\pi_{\gamma_{1},\gamma_{2},N})^{*}[-1]\simeq(\pi_{\gamma_{1},\gamma_{2},N})_{!}(\pi_{\gamma_{1},\gamma_{2},N})^{*}(i_{\gamma_{1},\gamma_{2},N})_{*}(i_{\gamma_{1},\gamma_{2},N})^{*}$$
and
\begin{equation}
\begin{array}{ll}
&(q_{\overline{Q},\gamma_{1},\gamma_{2},N})_{!}(\pi_{\gamma_{1},\gamma_{2},\gamma_{1}\times\gamma_{2},N})_{!}\varphi_{(\gamma_{1},\gamma_{2},\gamma_{1}\times\gamma_{2})/u,N}(\pi_{\gamma_{1},\gamma_{2},\gamma_{1}\times\gamma_{2},N})^{*}(q_{\overline{Q},\gamma_{1},\gamma_{2},N})^{*}[-1]
\\\simeq&(q_{\overline{Q},\gamma_{1},\gamma_{2},N})_{!}(\pi_{\gamma_{1},\gamma_{2},\gamma_{1}\times\gamma_{2},N})_{!}(\pi_{\gamma_{1},\gamma_{2},\gamma_{1}\times\gamma_{2},N})^{*}(i_{\gamma_{1},\gamma_{2},\gamma_{1}\times\gamma_{2},N})_{*}(i_{\gamma_{1},\gamma_{2},\gamma_{1}\times\gamma_{2},N})^{*}(q_{\overline{Q},\gamma_{1},\gamma_{2},N})^{*}.$$
\end{array}\nonumber
\end{equation}
Pulling back to
$\textbf{M}_{\overline{Q},\gamma_{1},\gamma_{2},N}^{sp}\times\mathbb{C}^{*}$
gives us the commutative diagram
$$\begin{xy}
(0,20)*+{H_{c,\textbf{G}_{\gamma_{1},\gamma_{2}}}^{\bullet,crit}(\widetilde{\textbf{M}}_{\widehat{Q},\gamma_{1},\gamma_{2}}^{sp},W_{\gamma_{1},\gamma_{2}})}="v1";
(80,20)*+{H_{c,\textbf{G}_{\gamma_{1}}\times
\textbf{G}_{\gamma_{2}}}^{\bullet,crit}(\widetilde{\textbf{M}}_{\widehat{Q},\gamma_{1},\gamma_{2}}^{sp},W_{\gamma_{1},\gamma_{2}})\otimes\mathbb{T}^{-\gamma_{1}\cdot\gamma_{2}}}="v2";
(0,0)*+{H_{c,\textbf{G}_{\gamma_{1},\gamma_{2}}}^{\bullet}(\widetilde{\textbf{M}}_{\Pi_{Q},\gamma_{1},\gamma_{2}}^{sp},\mathbb{Q})\otimes\mathbb{T}^{\gamma\cdot\gamma-\gamma_{1}\cdot\gamma_{2}}}="v3";
(80,0)*+{H_{c,\textbf{G}_{\gamma_{1}}\times
\textbf{G}_{\gamma_{2}}}^{\bullet}(\widetilde{\textbf{M}}_{\Pi_{Q},\gamma_{1},\gamma_{2}}^{sp},\mathbb{Q})\otimes\mathbb{T}^{\gamma\cdot\gamma-2\gamma_{1}\cdot\gamma_{2}}}="v4";
{\ar@{->}^{\sim} "v1";"v2"}; {\ar@{->}^{\sim} "v3";"v4"};
{\ar@{->}^{\wr} "v1";"v3"}; {\ar@{->}^{\wr} "v2";"v4"}
\end{xy}$$

\item[$\bullet$]$H_{c,\textbf{G}_{\gamma_{1}}\times\textbf{G}_{\gamma_{2}}}^{\bullet}(\widetilde{\textbf{M}}_{\Pi_{Q},\gamma_{1},\gamma_{2}}^{sp},\mathbb{Q})\stackrel{\sim}{\longrightarrow}
H_{c,\textbf{G}_{\gamma_{1}}\times
\textbf{G}_{\gamma_{2}}}^{\bullet}(\textbf{M}_{\Pi_{Q},\gamma_{1}}^{sp}\times\textbf{M}_{\Pi_{Q},\gamma_{2}}^{sp},\mathbb{Q})\otimes\mathbb{T}^{-\chi_{\widetilde{Q}}(\gamma_{2},\gamma_{1})}$.
Similar as the previous step, the affine fibrations
$p_{\gamma_{1},\gamma_{2},N}:
\overline{(\textbf{M}_{\widehat{Q},\gamma_{1},\gamma_{2}},\textbf{G}_{\gamma_{1}}\times
\textbf{G}_{\gamma_{2}})}_{N}\rightarrow\overline{(\textbf{M}_{\widehat{Q},\gamma_{1}}\times\textbf{M}_{\widehat{Q},\gamma_{2}},\textbf{G}_{\gamma_{1}}\times
\textbf{G}_{\gamma_{2}})}_{N}$ induce isomorphisms
\\$(p_{\gamma_{1},\gamma_{2},N})_{!}\varphi_{\gamma_{1},\gamma_{2},\gamma_{1}\times\gamma_{2},N}(p_{\gamma_{1},\gamma_{2},N})^{*}\mathbb{Q}_{\overline{(\textbf{M}_{\widehat{Q},\gamma_{1}}\times\textbf{M}_{\widehat{Q},\gamma_{2}},\textbf{G}_{\gamma_{1}}\times
\textbf{G}_{\gamma_{2}})}_{N}\times\mathbb{C}^{*}}\\\stackrel{\sim}{\longrightarrow}\varphi_{\gamma_{1}\boxplus\gamma_{2},N}(\mathbb{Q}_{\overline{(\textbf{M}_{\widehat{Q},\gamma_{1}}\times\textbf{M}_{\widehat{Q},\gamma_{2}},\textbf{G}_{\gamma_{1}}\times
\textbf{G}_{\gamma_{2}})}_{N}\times\mathbb{C}^{*}}\otimes\mathbb{T}^{l})$,where
$l=\sum\limits_{a:i\rightarrow
j\in\widetilde{Q}_{1}}\gamma_{1}^{j}\gamma_{2}^{i}$. Then we have
isomorphisms
\begin{equation}
\begin{array}{ll}
&(\pi_{\gamma_{1}\times\gamma_{2},N})_{!}(p_{\gamma_{1},\gamma_{2},N})_{!}\varphi_{(\gamma_{1},\gamma_{2},\gamma_{1}\times\gamma_{2})/u,N}(p_{\gamma_{1},\gamma_{2},N})^{*}(\pi_{\gamma_{1}\times\gamma_{2},N})^{*}\mathbb{Q}_{\overline{(\textbf{M}_{\overline{Q},\gamma_{1}}\times\textbf{M}_{\overline{Q},\gamma_{2}},\textbf{G}_{\gamma_{1}}\times
\textbf{G}_{\gamma_{2}})}_{N}\times\mathbb{C}^{*}}
\\&\stackrel{\sim}{\longrightarrow}(\pi_{\gamma_{1}\times\gamma_{2},N})_{!}\varphi_{(\gamma_{1}\boxplus\gamma_{2})/u,N}(\pi_{\gamma_{1}\times\gamma_{2},N})^{*}(\mathbb{Q}_{\overline{(\textbf{M}_{\overline{Q},\gamma_{1}}\times\textbf{M}_{\overline{Q},\gamma_{2}},\textbf{G}_{\gamma_{1}}\times
\textbf{G}_{\gamma_{2}})}_{N}\times\mathbb{C}^{*}}\otimes\mathbb{T}^{l}).
\end{array}\nonumber
\end{equation}
The commutative diagram $$\begin{xy}
(0,20)*+{\overline{(\textbf{M}_{\widehat{Q},\gamma_{1},\gamma_{2}},\textbf{G}_{\gamma_{1}}\times\textbf{G}_{\gamma_{2}})}_{N}\times\mathbb{C}^{*}}="v1";
(80,20)*+{\overline{(\textbf{M}_{\widehat{Q},\gamma_{1}}\times\textbf{M}_{\widehat{Q},\gamma_{2}},\textbf{G}_{\gamma_{1}}\times\textbf{G}_{\gamma_{2}})}_{N}\times\mathbb{C}^{*}}="v2";
(0,0)*+{\overline{(\textbf{M}_{\overline{Q},\gamma_{1},\gamma_{2}},\textbf{G}_{\gamma_{1}}\times\textbf{G}_{\gamma_{2}})}_{N}\times\mathbb{C}^{*}}="v3";
(80,0)*+{\overline{(\textbf{M}_{\overline{Q},\gamma_{1}}\times\textbf{M}_{\overline{Q},\gamma_{2}},\textbf{G}_{\gamma_{1}}\times\textbf{G}_{\gamma_{2}})}_{N}\times\mathbb{C}^{*}}="v4";
{\ar@{->}^{p_{\gamma_{1},\gamma_{2},N}} "v1";"v2"};
{\ar@{->}^{p_{\overline{Q},\gamma_{1},\gamma_{2},N}}
"v3";"v4"};{\ar@{->}^{\pi_{\gamma_{1},\gamma_{2},\gamma_{1}\times\gamma_{2},N}}
"v1";"v3"}; {\ar@{->}^{\pi_{\gamma_{1}\times\gamma_{2},N}}
"v2";"v4"}
\end{xy}$$
implies isomorphisms \begin{equation}
\begin{array}{ll}
&(\pi_{\gamma_{1}\times\gamma_{2},N})_{!}(p_{\gamma_{1},\gamma_{2},N})_{!}\varphi_{(\gamma_{1},\gamma_{2},\gamma_{1}\times\gamma_{2})/u,N}(p_{\gamma_{1},\gamma_{2},N})^{*}(\pi_{\gamma_{1}\times\gamma_{2},N})^{*}[-1]
\\\simeq&(p_{\overline{Q},\gamma_{1},\gamma_{2},N})_{!}(\pi_{\gamma_{1},\gamma_{2},\gamma_{1}\times\gamma_{2},N})_{!}\varphi_{(\gamma_{1},\gamma_{2},\gamma_{1}\times\gamma_{2})/u,N}(\pi_{\gamma_{1},\gamma_{2},\gamma_{1}\times\gamma_{2},N})^{*}(p_{\overline{Q},\gamma_{1},\gamma_{2},N})^{*}[-1].
\end{array}\nonumber
\end{equation}
By  Theorem 2.2, we have
$$(\pi_{\gamma_{1}\times\gamma_{2},N})_{!}\varphi_{(\gamma_{1}\boxplus\gamma_{2})/u,N}(\pi_{\gamma_{1}\times\gamma_{2},N})^{*}[-1]\simeq(\pi_{\gamma_{1}\times\gamma_{2},N})_{!}(\pi_{\gamma_{1}\times\gamma_{2},N})^{*}(i_{\gamma_{1}\times\gamma_{2},N})_{*}(i_{\gamma_{1}\times\gamma_{2},N})^{*}$$
and
\begin{equation}
\begin{array}{ll}
&(p_{\overline{Q},\gamma_{1},\gamma_{2},N})_{!}(\pi_{\gamma_{1},\gamma_{2},\gamma_{1}\times\gamma_{2},N})_{!}\varphi_{(\gamma_{1},\gamma_{2},\gamma_{1}\times\gamma_{2})/u,N}(\pi_{\gamma_{1},\gamma_{2},\gamma_{1}\times\gamma_{2},N})^{*}(p_{\overline{Q},\gamma_{1},\gamma_{2},N})^{*}[-1]
\\\simeq&(p_{\overline{Q},\gamma_{1},\gamma_{2},N})_{!}(\pi_{\gamma_{1},\gamma_{2},\gamma_{1}\times\gamma_{2},N})_{!}(\pi_{\gamma_{1},\gamma_{2},\gamma_{1}\times\gamma_{2},N})^{*}(i_{\gamma_{1},\gamma_{2},\gamma_{1}\times\gamma_{2},N})_{*}(i_{\gamma_{1},\gamma_{2},\gamma_{1}\times\gamma_{2},N})^{*}(p_{\overline{Q},\gamma_{1},\gamma_{2},N})^{*}.$$
\end{array}\nonumber
\end{equation}
By pulling back to
$\overline{(\textbf{M}_{\overline{Q},\gamma_{1}}^{sp}\times\textbf{M}_{\overline{Q},\gamma_{2}}^{sp},\textbf{G}_{\gamma_{1}}\times\textbf{G}_{\gamma_{2}})}_{N}\times\mathbb{C}^{*}$,
we have
$$\begin{xy}
(0,20)*+{H_{c,\textbf{G}_{\gamma_{1}}\times\textbf{G}_{\gamma_{2}}}^{\bullet,crit}(\widetilde{\textbf{M}}_{\widehat{Q},\gamma_{1},\gamma_{2}}^{sp},W_{\gamma_{1},\gamma_{2}})}="v1";
(85,20)*+{H_{c,\textbf{G}_{\gamma_{1}}\times\textbf{G}_{\gamma_{2}}}^{\bullet,crit}(\textbf{M}_{\widehat{Q},\gamma_{1}}^{sp}\times\textbf{M}_{\widehat{Q},\gamma_{2}}^{sp},W_{\gamma_{1}}\boxplus
W_{\gamma_{2}})\otimes\mathbb{T}^{l}}="v2";
(0,0)*+{H_{c,\textbf{G}_{\gamma_{1}}\times\textbf{G}_{\gamma_{2}}}^{\bullet}(\widetilde{\textbf{M}}_{\Pi_{Q},\gamma_{1},\gamma_{2}}^{sp},\mathbb{Q})\otimes\mathbb{T}^{l_{1}}}="v3";
(85,0)*+{H_{c,\textbf{G}_{\gamma_{1}}\times\textbf{G}_{\gamma_{2}}}^{\bullet}(\textbf{M}_{\Pi_{Q},\gamma_{1}}^{sp}\times\textbf{M}_{\Pi_{Q},\gamma_{2}}^{sp},\mathbb{Q})\otimes\mathbb{T}^{l_{2}}}="v4";
{\ar@{->}^{\sim} "v1";"v2"}; {\ar@{->}^{\sim} "v3";"v4"};
{\ar@{->}^{\wr} "v1";"v3"}; {\ar@{->}^{\wr} "v2";"v4"}
\end{xy}$$

where $l_{1}=\gamma\cdot\gamma-\gamma_{1}\cdot\gamma_{2}$, and
$l_{2}=\gamma_{1}\cdot\gamma_{1}+\gamma_{2}\cdot\gamma_{2}+l$.

\item[$\bullet$]$H_{c,\textbf{G}_{\gamma_{1}}\times\textbf{G}_{\gamma_{2}}}^{\bullet}(\textbf{M}_{\Pi_{Q},\gamma_{1}}^{sp}\times\textbf{M}_{\Pi_{Q},\gamma_{2}}^{sp},\mathbb{Q})\stackrel{\sim}{\longrightarrow}H_{c,\textbf{G}_{\gamma_{1}}}^{\bullet}(\textbf{M}_{\Pi_{Q},\gamma_{1}}^{sp},\mathbb{Q})\otimes
H_{c,\textbf{G}_{\gamma_{2}}}^{\bullet}(\textbf{M}_{\Pi_{Q},\gamma_{2}}^{sp},\mathbb{Q})$.
This is the K\"unneth isomorphism compatible with the Thom-Sebastiani
isomorphism by Theorem 2.3.

\end{itemize}

The above computations can be summarized for convience of the reader in the form of the following statement.

\begin{prop}

The coproduct making the vector space
$\bigoplus\limits_{\gamma\in\mathbb{Z}^{I}_{\geqslant0}}H_{c,\textbf{G}}^{\bullet}(\textbf{M}_{\Pi_{Q},\gamma}^{sp},\mathbb{Q})$
into a coalgebra
is given by the composition of the maps

\begin{equation}
\begin{array}{ll}
&H_{c,\textbf{G}_{\gamma}}^{\bullet}(\textbf{M}_{\Pi_{Q},\gamma}^{sp},\mathbb{Q})\rightarrow
H_{c,\textbf{G}_{\gamma_{1},\gamma_{2}}}^{\bullet}(\textbf{M}_{\Pi_{Q},\gamma}^{sp},\mathbb{Q})\\
\rightarrow&H_{c,\textbf{G}_{\gamma_{1},\gamma_{2}}}^{\bullet}(\widetilde{\textbf{M}}_{\Pi_{Q},\gamma_{1},\gamma_{2}}^{sp},\mathbb{Q})\otimes\mathbb{T}^{-\gamma_{1}\cdot\gamma_{2}}
\\\stackrel{\sim}{\longrightarrow}&H_{c,\textbf{G}_{\gamma_{1}}\times\textbf{G}_{\gamma_{2}}}^{\bullet}(\widetilde{\textbf{M}}_{\Pi_{Q},\gamma_{1},\gamma_{2}}^{sp},\mathbb{Q})\otimes\mathbb{T}^{-2\gamma_{1}\cdot\gamma_{2}}
\\\stackrel{\sim}{\longrightarrow}&H_{c,\textbf{G}_{\gamma_{1}}\times\textbf{G}_{\gamma_{2}}}^{\bullet}(\textbf{M}_{\Pi_{Q},\gamma_{1}}^{sp}\times\textbf{M}_{\Pi_{Q},\gamma_{2}}^{sp},\mathbb{Q})\otimes\mathbb{T}^{-\chi_{Q}(\gamma_{1},\gamma_{2})-\chi_{Q}(\gamma_{2},\gamma_{1})}
\\\stackrel{\sim}{\longrightarrow}&H_{c,\textbf{G}_{\gamma_{1}}}^{\bullet}(\textbf{M}_{\Pi_{Q},\gamma_{1}}^{sp},\mathbb{Q})\otimes
H_{c,\textbf{G}_{\gamma_{2}}}^{\bullet}(\textbf{M}_{\Pi_{Q},\gamma_{2}}^{sp},\mathbb{Q})\otimes\mathbb{T}^{-\chi_{Q}(\gamma_{1},\gamma_{2})-\chi_{Q}(\gamma_{2},\gamma_{1})}.
\end{array}\nonumber
\end{equation}

\end{prop}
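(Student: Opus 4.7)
The plan is to assemble the five morphisms constructed one after another in the bullet points preceding the statement into a single composition, and to verify that the Tate twists add up to the claimed total. Each bullet produces one arrow on compactly supported equivariant cohomology of a variety of $\Pi_Q$-representations, and presents it as the image (under the commutative square of Theorem~2.5, or rather its equivariant version \cite[Cor.~A.8]{D1}) of a corresponding arrow on the critical cohomology of $(\widehat{Q},W)$. My first step will therefore be simply to concatenate the bottom rows of those five squares.

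Second, I will tally the Tate twists. Bullet~1 is a pullback along a closed embedding of groups with proper quotient, and contributes no twist. Bullets~2 and~3 each contribute $\mathbb{T}^{-\gamma_1\cdot\gamma_2}$: the first from restriction to the subvariety $\widetilde{\textbf{M}}^{sp}_{\Pi_Q,\gamma_1,\gamma_2}$, the second from swapping $\textbf{G}_{\gamma_1,\gamma_2}$ for $\textbf{G}_{\gamma_1}\times\textbf{G}_{\gamma_2}$ along the affine fibration $q_{\gamma_1,\gamma_2,N}$ via Verdier duality. Bullet~4 contributes the twist induced by the affine fibration $p_{\gamma_1,\gamma_2,N}$, and bullet~5 is the K\"unneth isomorphism. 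Using the identity $\chi_Q(\gamma_1,\gamma_2)+\chi_Q(\gamma_2,\gamma_1)=-\sum_{i,j}a_{ij}(\gamma_1^i\gamma_2^j+\gamma_1^j\gamma_2^i)+2\gamma_1\cdot\gamma_2$ these contributions should collapse to the claimed total $\mathbb{T}^{-\chi_Q(\gamma_1,\gamma_2)-\chi_Q(\gamma_2,\gamma_1)}$.

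Third, to upgrade the resulting morphism into a genuine coalgebra structure I will repeat the same five-step construction for a three-step flag $\gamma_1,\gamma_2,\gamma_3$: each individual step again sits in a commutative square identifying the $\Pi_Q$-arrow with its critical counterpart on $(\widehat{Q},W)$, so the two iterated coproducts on the $\Pi_Q$ side must agree because they do on $(\widehat{Q},W)$ by the associativity of the critical COHA product of \cite{KoSo3}. The step I expect to be the main obstacle is the Thom--Sebastiani compatibility in bullet~4: the pushforward along $p_{\gamma_1,\gamma_2,N}$ in the critical-cohomology picture must intertwine with a K\"unneth-type decomposition compatibly with $\boxplus$, and this is precisely what the second commutative diagram in Theorem~2.5 was set up to enforce; once that is in hand, the rest of the argument is routine tracking of functoriality of vanishing cycles and Tate twists.
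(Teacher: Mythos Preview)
Your proposal is correct and matches the paper's approach exactly. In the paper the proposition is explicitly presented as a summary (``The above computations can be summarized for convenience of the reader in the form of the following statement''), so there is no separate proof: the five bullet points preceding the statement \emph{are} the proof, each one producing one arrow on $\Pi_Q$-cohomology together with a commutative square identifying it with the corresponding step of the critical COHA comultiplication for $(\widehat{Q},W)$, and your plan to concatenate the bottom rows and tally the Tate twists is precisely what the reader is expected to do. Your third step, deducing coassociativity by transporting the associativity of the critical COHA through those same commutative squares, is a sensible addition that the paper leaves implicit in the word ``coalgebra''.
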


Now let
$\mathcal{H}_{\gamma}:=H_{c,\textbf{G}_{\gamma}}^{\bullet}(\textbf{M}_{\Pi_{Q},\gamma}^{sp},\mathbb{Q})^{\vee}\otimes\mathbb{T}^{-\chi_{Q}(\gamma,\gamma)}$,
and
$\mathcal{H}=\bigoplus\limits_{\gamma\in\mathbb{Z}^{I}_{\geqslant0}}\mathcal{H}_{\gamma}$.
Then the above coproduct makes $\mathcal{H}$ an associative algebra
with product
\begin{equation}
\begin{array}{ll}
&\mathcal{H}_{\gamma_{1}}\otimes\mathcal{H}_{\gamma_{2}}=H_{c,\textbf{G}_{\gamma_{1}}}^{\bullet}(\textbf{M}_{\Pi_{Q},\gamma_{1}}^{sp},\mathbb{Q})^{\vee}\otimes\mathbb{T}^{-\chi_{Q}(\gamma_{1},\gamma_{1})}\otimes
H_{c,\textbf{G}_{\gamma_{2}}}^{\bullet}(\textbf{M}_{\Pi_{Q},\gamma_{2}}^{sp},\mathbb{Q})^{\vee}\otimes\mathbb{T}^{-\chi_{Q}(\gamma_{2},\gamma_{2})}
\\=&H_{c,\textbf{G}_{\gamma_{1}}}^{\bullet}(\textbf{M}_{\Pi_{Q},\gamma_{1}}^{sp},\mathbb{Q})^{\vee}\otimes
H_{c,\textbf{G}_{\gamma_{2}}}^{\bullet}(\textbf{M}_{\Pi_{Q},\gamma_{2}}^{sp},\mathbb{Q})^{\vee}\otimes\mathbb{T}^{-\chi_{Q}(\gamma_{1},\gamma_{1})-\chi_{Q}(\gamma_{2},\gamma_{2})}
\\\rightarrow&H_{c,\textbf{G}_{\gamma_{1}+\gamma_{2}}}^{\bullet}(\textbf{M}_{\Pi_{Q},\gamma_{1}+\gamma_{2}}^{sp},\mathbb{Q})^{\vee}\otimes\mathbb{T}^{-\chi_{Q}(\gamma_{1},\gamma_{2})-\chi_{Q}(\gamma_{2},\gamma_{1})}\otimes\mathbb{T}^{-\chi_{Q}(\gamma_{1},\gamma_{1})-\chi_{Q}(\gamma_{2},\gamma_{2})}
\\=&H_{c,\textbf{G}_{\gamma_{1}+\gamma_{2}}}^{\bullet}(\textbf{M}_{\Pi_{Q},\gamma_{1}+\gamma_{2}}^{sp},\mathbb{Q})^{\vee}\otimes\mathbb{T}^{-\chi_{Q}(\gamma_{1}+\gamma_{2},\gamma_{1}+\gamma_{2})}=\mathcal{H}_{\gamma_{1}+\gamma_{2}}.
\end{array}\nonumber
\end{equation}

\begin{defn}
The associative algebra $\mathcal{H}$ is called the {\it
Cohomological Hall algebra}  of the
preprojective algebra $\Pi_{Q}$ associated with the quiver $Q$.
\end{defn}

\begin{rem}
In the framework of equivariant $K$-theory a similar notion was introduced in \cite{YaZha}.
\end{rem}

\begin{cor}
This product preserves the modified cohomological
degree, thus the zero degree part
$\mathcal{H}^0=\bigoplus\limits_{\gamma\in\mathbb{Z}^{I}_{\geqslant0}}\mathcal{H}_{\gamma}^0=\bigoplus\limits_{\gamma\in\mathbb{Z}^{I}_{\geqslant0}}H_{c,\textbf{G}_{\gamma}}^{-2\chi_{Q}(\gamma,\gamma)}(\textbf{M}_{\Pi_{Q},\gamma}^{sp},\mathbb{Q})^{\vee}\otimes\mathbb{T}^{-\chi_{Q}(\gamma,\gamma)}$
is a subalgebra of $\mathcal{H}$.
\end{cor}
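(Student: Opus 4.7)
The plan is to verify that the product $\mathcal{H}_{\gamma_1} \otimes \mathcal{H}_{\gamma_2} \to \mathcal{H}_{\gamma_1+\gamma_2}$ is homogeneous of degree zero with respect to the modified cohomological degree, defined as follows: a class in $H^{k,\vee}_{c,\mathbf{G}_\gamma}(\mathbf{M}^{sp}_{\Pi_Q,\gamma}) \otimes \mathbb{T}^{-\chi_Q(\gamma,\gamma)}$ is assigned degree $-k - 2\chi_Q(\gamma,\gamma)$, using that dualization negates cohomological degree and that $\mathbb{T}^n$ lives in cohomological degree $2n$. With this convention $\mathcal{H}_\gamma^0$ is exactly the subspace displayed in the statement: setting $-k-2\chi_Q(\gamma,\gamma)=0$ gives $k = -2\chi_Q(\gamma,\gamma)$, matching the formula.

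The essential step is to confirm that each of the five arrows composing the coproduct in Proposition~2.5 is degree-preserving in the graded sense, with every shift absorbed into the Tate-twist factor shown. The closed-embedding pullbacks at the level of groups and of substacks, the descents along the affine fibrations $q_{\gamma_1,\gamma_2,N}$ and $p_{\gamma_1,\gamma_2,N}$, and the K\"unneth identification each act predictably on graded pieces; the partial shifts by $\gamma_1\cdot\gamma_2$ and by the loop contribution $\sum_{a:i\to j\in \widetilde{Q}_1}\gamma_1^j\gamma_2^i$ collected along the way assemble into the single total twist $\mathbb{T}^{-\chi_Q(\gamma_1,\gamma_2) - \chi_Q(\gamma_2,\gamma_1)}$ that appears at the end of the coproduct.

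Next I would dualize and tensor with $\mathbb{T}^{-\chi_Q(\gamma_i,\gamma_i)}$ on each source factor and with $\mathbb{T}^{-\chi_Q(\gamma_1+\gamma_2,\gamma_1+\gamma_2)}$ on the target, invoking the bilinear identity
\[
\chi_Q(\gamma_1+\gamma_2,\gamma_1+\gamma_2) = \chi_Q(\gamma_1,\gamma_1) + \chi_Q(\gamma_2,\gamma_2) + \chi_Q(\gamma_1,\gamma_2) + \chi_Q(\gamma_2,\gamma_1)
\]
to match the Tate twists. Concretely, if the dualized coproduct sends $v_1 \otimes v_2 \in H^{k_1,\vee}_c \otimes H^{k_2,\vee}_c$ into $H^{k,\vee}_c$ with $k = k_1+k_2 - 2\chi_Q(\gamma_1,\gamma_2) - 2\chi_Q(\gamma_2,\gamma_1)$, then one computes that the modified degrees on both sides equal $-k_1 - k_2 - 2\chi_Q(\gamma_1,\gamma_1) - 2\chi_Q(\gamma_2,\gamma_2)$, so the product preserves modified degree.

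Consequently $\mathcal{H}^0_{\gamma_1} \cdot \mathcal{H}^0_{\gamma_2} \subset \mathcal{H}^0_{\gamma_1+\gamma_2}$, and $\mathcal{H}^0 = \bigoplus_\gamma \mathcal{H}_\gamma^0$ is closed under multiplication, hence a subalgebra. The only real obstacle is the sign-bookkeeping through the Tate twists and dualization; once the convention $\deg \mathbb{T}^n = 2n$ is fixed and the individual degree shifts in the five-step composition are collected, the claim reduces to the bilinear identity above, and everything is formal.
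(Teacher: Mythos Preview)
Your proposal is correct and follows exactly the reasoning the paper intends: the corollary is stated without proof in the paper, as an immediate consequence of the explicit five-step description of the coproduct in Proposition~2.6 together with the bilinear expansion of $\chi_Q(\gamma_1+\gamma_2,\gamma_1+\gamma_2)$, and your degree bookkeeping through the Tate twists and dualization is precisely how one verifies it.
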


We can reformulate the definition of COHA of $\Pi_{Q}$ using
language of stacks. The natural morphism of stacks
$\textbf{M}_{\Pi_{Q},\gamma_{1},\gamma_{2}}/\textbf{G}_{\gamma_{1},\gamma_{2}}\rightarrow\textbf{M}_{\Pi_{Q},\gamma}/\textbf{G}_{\gamma}$
is proper, hence it induces the pushforward map on $\mathcal{H}$.
Composting it with the pullback by the morphism
$\textbf{M}_{\Pi_{Q},\gamma_{1},\gamma_{2}}/\textbf{G}_{\gamma_{1},\gamma_{2}}\rightarrow\textbf{M}_{\Pi_{Q},\gamma_{1}}/\textbf{G}_{\gamma_{1}}\times\textbf{M}_{\Pi_{Q},\gamma_{2}}/\textbf{G}_{\gamma_{2}}$,
we obtain the product.

\subsection{Lusztig's seminilpotent Lagrangian subvariety}
In this subsection we work in the framework close to the one
from \cite{B}.

Let $Q$ be a quiver (possibly with loops) with vertices $I$ and
arrows $\Omega$, and denote by $\Omega_{i}$ the set of loops at
$i\in I$. We call $i$ imaginary if the number of loops
$\omega_{i}=|\Omega_{i}|\geqslant1$, and real if $\omega_{i}=0$. Let
$I^{im}$ be the set of imaginary vertices and $I^{re}$ real
vertices.

\begin{defn}
A representation $x\in\textbf{M}_{\overline{Q},\gamma}$ is {\it
seminilpotent} if there is an $I$-graded filtration
$W=(W_{0}=V_{\gamma}\supset\ldots\supset W_{r}=\{0\})$ of the
representation space $V_{\gamma}=(V_{i})_{i\in I}$, such that
$x_{a^{*}}(W_{\bullet})\subseteq W_{\bullet+1}$, and
$x_{a}(W_{\bullet})\subseteq W_{\bullet}$ for $a\in\Omega$.
\end{defn}

\begin{rem}
Our definition of seminilpotency is slightly different from that in {\rm
\cite{B}}. We put nilpotent condition on the dual arrows $a^{*}$
rather than $a$. But main results of {\rm \cite{B}}  hold
in our situation as well.
\end{rem}

We denote by $\textbf{M}_{\overline{Q},\gamma}^{sp}$ the space of
seminilpotent representations of dimension $\gamma$. Then by
\cite[Th.~1.15]{B}, the space of seminipotnet representations of
$\Pi_{Q}$ of dimension $\gamma$,
$\textbf{M}_{\Pi_{Q},\gamma}^{sp}\subset\textbf{M}_{\overline{Q},\gamma}^{sp}$,
is a Lagrangian subvariety of $\textbf{M}_{\overline{Q},\gamma}$.

Let
$\textbf{M}_{\Pi_{Q},\gamma,i,l}^{sp}=\{x\in\textbf{M}_{\Pi_{Q},\gamma}^{sp}|{\rm
codim}(\bigoplus\limits_{j\neq i,a:j\rightarrow i {\rm in}
\overline{Q}}{\rm Im} x_{a})=l\}$. Then
$\textbf{M}_{\Pi_{Q},\gamma}^{sp}=\bigcup\limits_{i\in I,
l\geqslant1}\textbf{M}_{\Pi_{Q},\gamma,i,l}^{sp}$ by the
seminilpotency condition. There is a one to one correspondence of
the sets of irreducible components (see \cite[Prop.1.14]{B})
\begin{equation}
{\rm
Irr}(\textbf{M}_{\Pi_{Q},\gamma,i,l}^{sp})\stackrel{\sim}{\longrightarrow}{\rm
Irr} (\textbf{M}_{\Pi_{Q},\gamma-le_{i},i,0}^{sp})\times{\rm
Irr}(\textbf{M}_{\Pi_{Q},le_{i}}^{sp}),
\end{equation}
where $e_{i}=(\delta_{ij})_{j\in I}$. For any vertex $i$, we have
${\rm
Irr}(\textbf{M}_{\Pi_{Q},\gamma}^{sp})=\bigsqcup\limits_{l\geqslant0}{\rm
Irr}(\textbf{M}_{\Pi_{Q},\gamma,i,l}^{sp})$. If $i\in I^{re}$ then
${\rm Irr}(\textbf{M}_{\Pi_{Q},le_{i}}^{sp})$ consists of only one
element, namely the zero representation. We denote by $Z_{i,l}$ the
only element in ${\rm Irr}(\textbf{M}_{\Pi_{Q},le_{i}}^{sp})$. If
$i\in I^{im}$, then there are two cases. If the number of loops
$\omega_{i}=1$, then ${\rm Irr}(\textbf{M}_{\Pi_{Q},le_{i}}^{sp})$
is parametrized by $\mathfrak{C}_{i,l}=\{c=(c_{k})\}$, the set of
partitions of $l$ (i.e., $\sum_{k}c_{k}=l$, $c_{k}>0,\forall k$, and
$c_{k+1}\geqslant c_{k}$). If $\omega_{i}>1$, then it is
parametrized by the set of compositions also denoted by
$\mathfrak{C}_{i,l}$ (i.e., $\sum_{k}c_{k}=l$, $c_{k}>0,\forall k$).

We put $|c|=\sum_{k}c_{k}$ for $c\in\mathfrak{C}_{i,l}$, and denote
by $Z_{i,c}\in{\rm Irr}(\textbf{M}_{\Pi_{Q},le_{i}}^{sp})$ the
irreducible component corresponding to $c$. Let  $Z\in {\rm
Irr}(\textbf{M}_{\Pi_{Q},\gamma}^{sp})$, then there exists $i\in I$
and $l\geqslant1$ such that
$Z\bigcap\textbf{M}_{\Pi_{Q},\gamma,i,l}^{sp}$ is dense in $Z$. We
denote by $\varepsilon_{i}(Z)$ the corresponding partition or
composition if $i\in I^{im}$, and $\varepsilon_{i}(Z)=l$ if $i\in
I^{re}$, via the one to one correspondence (4).

Now let $\mathscr{M}_{\gamma}$ be the $\mathbb{Q}$-vector space of
constructible functions
$f:\textbf{M}_{\Pi_{Q},\gamma}^{sp}\rightarrow \mathbb{Q}$ which are
constant on any $\textbf{G}_{\gamma}$-orbit, and
$\mathscr{M}=\bigoplus_{\gamma}\mathscr{M}_{\gamma}$. Then one can
define a product $\ast$ on $\mathscr{M}$ in the way which is
analogous to the definition of Lusztig for nilponent case in
\cite[Section~12]{L1}.

More precisely, let us denote by $\textbf{M}_{\Pi_{Q},V}^{sp}$ the
space of seminilpotent representations of $\Pi_{Q}$ with $I$-graded
vector space $V$, and $\mathscr{M}_{V}$ the $\mathbb{Q}$-vector
space of constructible functions
$f:\textbf{M}_{\Pi_{Q},V}^{sp}\rightarrow \mathbb{Q}$ constant on
any $\textbf{G}_{\gamma}$-orbit. Let $V_{1}$, $V_{2}$ and $V$ be
$I$-graded vector spaces of dimensions $\gamma_{1}$, $\gamma_{2}$
and $\gamma=\gamma_{1}+\gamma_{2}$ respectively, and
$f_{i}\in\mathscr{M}_{V_{i}},i=1,2$. Then $f_{1}\ast
f_{2}\in\mathscr{M}_{V}$ is defined using the diagram
$$\begin{xy}
(0,0)*+{\textbf{M}_{\Pi_{Q},V_{1}}^{sp}\times\textbf{M}_{\Pi_{Q},V_{2}}^{sp}}="v1";
(40,0)*+{\textbf{F}'}="v2"; (60,0)*+{\textbf{F}''}="v3";
(90,0)*+{\textbf{M}_{\Pi_{Q},V}^{sp}}="v4"; {\ar@{->}_{p_{1}}
"v2";"v1"}; {\ar@{->}^{p_{2}} "v2";"v3"}; {\ar@{->}^{p_{3}}
"v3";"v4"}
\end{xy}$$
where the notations are as follows: $F''$ is the variety of pairs
$(x,U)$ with $x\in\textbf{M}_{\Pi_{Q},V}^{sp}$ and $U$ an $x$-stable
$I$-graded subspace of $V$ with dimension $\gamma_{2}$; $F'$ is the
variety of quadruples $(x,U,R'',R')$ where $(x,U)\in F''$,
$R'':V_{2}\stackrel{\sim}{\longrightarrow}U$ and
$R':V_{1}\stackrel{\sim}{\longrightarrow}V/U$. The maps
$p_{1}(x,U,R'',R')=(x_{1},x_{2})$ where $xR'=R'x_{1}$ and
$xR''=R''x_{2}$, $p_{2}(x,U,R'',R')=(x,U)$, and $p_{3}(x,U)=x$. Note
that $p_{2}$ is a
$\textbf{G}_{V_{1}}\times\textbf{G}_{V_{2}}$-princepal bundle and
$p_{3}$ is proper. Let $f(x_{1},x_{2})=f_{1}(x_{1})f_{2}(x_{2})$,
then there is a unique function $f_{3}\in\mathscr{M}_{F''}$ such
that $p_{1}^{*}f=p_{2}^{*}f_{3}$. Finally, define $f_{1}\ast
f_{2}=(p_{3})_{!}(f_{3})$. By identifying the vector spaces
$\mathscr{M}_{V}$ for various $V$ with $\mathscr{M}_{\gamma}$ in a
coherent way ($\dim (V)=\gamma$), we define the product $\ast$ on
$\mathscr{M}$, making it an associative $\mathbb{Q}$-algebra.

One can also reformulate this product using the diagram
of stacks
$$\textbf{M}_{\Pi_{Q},\gamma_{2}}/\textbf{G}_{\gamma_{2}}\times\textbf{M}_{\Pi_{Q},\gamma_{1}}/\textbf{G}_{\gamma_{1}}\leftarrow\textbf{M}_{\Pi_{Q},\gamma_{2},\gamma_{1}}/\textbf{G}_{\gamma_{2},\gamma_{1}}\rightarrow\textbf{M}_{\Pi_{Q},\gamma}/\textbf{G}_{\gamma}.$$

We denote by $1_{i,c}$ (resp. $1_{i,l}$) the characteristic function
of $Z_{i,c}$ (resp. $Z_{i,l}$), and
$\mathscr{M}_{0}\subseteq\mathscr{M}$ the subalgebra generated by
$1_{i,(l)}$ and $1_{i,1}$. For any $Z\in{\rm
Irr}(\textbf{M}_{\Pi_{Q},\gamma}^{sp})$ and
$f\in\mathscr{M}_{\gamma}$, let $\rho_{Z}(f)=c$ if $Z\bigcap
f^{-1}(c)$ is open dense in $Z$.

\begin{thm}(see {\rm \cite[Prop.~1.18]{B}})
For any $Z\in{\rm Irr}(\textbf{M}_{\Pi_{Q},\gamma}^{sp})$ there
exists
$f_{Z}\in\mathscr{M}_{0,\gamma}=\mathscr{M}_{0}\cap\mathscr{M}_{\gamma}$
such that $\rho_{Z}(f_{Z})=1$, and $\rho_{Z'}(f_{Z})=0$ for $Z'\neq
Z$.
\end{thm}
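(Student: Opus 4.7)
My plan is to argue by induction on the size of the dimension vector $|\gamma|=\sum_{i\in I}\gamma^i$. The base case $\gamma=0$ is vacuous, and for $|\gamma|=1$ either the unique component is one of the generators $1_{i,1}$ (real $i$) or one of the $1_{i,(1)}$ (imaginary $i$), so nothing is needed. For the inductive step, fix $Z\in\mathrm{Irr}(\mathbf{M}^{sp}_{\Pi_Q,\gamma})$. By the stratification $\mathbf{M}^{sp}_{\Pi_Q,\gamma}=\bigcup_{i,l\ge 1}\mathbf{M}^{sp}_{\Pi_Q,\gamma,i,l}$ and the decomposition of irreducible components after the stratification, there exist a vertex $i\in I$ and an integer $l\ge 1$ such that $Z\cap\mathbf{M}^{sp}_{\Pi_Q,\gamma,i,l}$ is dense in $Z$. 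The bijection~(4) then associates to $Z$ an irreducible component $Z'\in\mathrm{Irr}(\mathbf{M}^{sp}_{\Pi_Q,\gamma-le_i,i,0})\subset\mathrm{Irr}(\mathbf{M}^{sp}_{\Pi_Q,\gamma-le_i})$ and an irreducible component of $\mathbf{M}^{sp}_{\Pi_Q,le_i}$, namely $Z_{i,l}$ if $i\in I^{re}$ or $Z_{i,c}$ for some $c\in\mathfrak C_{i,l}$ if $i\in I^{im}$.

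Next, since $|\gamma-le_i|<|\gamma|$, the induction hypothesis yields an element $f_{Z'}\in\mathscr M_{0,\gamma-le_i}$ detecting $Z'$, i.e.\ $\rho_{Z'}(f_{Z'})=1$ and $\rho_{W}(f_{Z'})=0$ for every other $W\in\mathrm{Irr}(\mathbf{M}^{sp}_{\Pi_Q,\gamma-le_i})$. I then form the candidate
\begin{equation*}
g_Z \;:=\; 1_{i,c}\ast f_{Z'} \quad\text{(respectively } 1_{i,l}\ast f_{Z'} \text{ in the real case)}.
\end{equation*}
For this to lie in $\mathscr M_0$ I still need to express $1_{i,c}$ in terms of the generators $1_{i,(l')}$ (and $1_{i,1}$); for imaginary $i$ this reduces to a purely combinatorial computation inside the Hall-type algebra on dimension vectors of the form $m e_i$, modeled on Lusztig's treatment for the Jordan quiver. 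Concretely, iterated $\ast$-products of the $1_{i,(l_k)}$ produce functions whose restriction to the strata of $\mathbf{M}^{sp}_{\Pi_Q,le_i}$ is upper-triangular with respect to the dominance order on partitions/compositions, so one can invert and obtain $1_{i,c}\in\mathscr M_0$.

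The main geometric content is a triangularity statement: the correspondence $\mathbf{F}''\to\mathbf{M}^{sp}_{\Pi_Q,\gamma}$ used to define $\ast$ shows that any point in the support of $g_Z$ has an $i$-stable subrepresentation of dimension $le_i$, hence the codimension parameter at $i$ is at least $l$; thus $g_Z$ vanishes on every stratum $\mathbf{M}^{sp}_{\Pi_Q,\gamma,i,l'}$ with $l'<l$, and a fiber count over a generic point of $Z$ (using that $p_2$ is a principal $\mathbf G_{V_1}\times\mathbf G_{V_2}$-bundle and $p_3$ is proper and generically of the expected fiber dimension) shows $\rho_Z(g_Z)$ is a positive integer. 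After normalizing, $\rho_Z(g_Z)=1$. Finally, $g_Z$ may still evaluate nontrivially on other components $Z''$ that also meet $\mathbf{M}^{sp}_{\Pi_Q,\gamma,i,l'}$ for $l'\ge l$; one removes these contributions by subtracting appropriate scalar multiples of the $f_{Z''}$, which exist either by the same induction on $|\gamma|$ (if $Z''$ comes from a strictly smaller dimension vector under the bijection~(4)), or by a secondary downward induction on $l'$ inside the current dimension vector, terminating because for $l'$ large enough the stratum is empty.

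The hardest point will be the last step, the triangularity/vanishing on strata with $l'<l$ together with the finiteness of the correction process. Verifying that $p_3$ restricted to the fiber over $Z$ has the right generic degree, and organizing the double induction (on $|\gamma|$ and on $l$) so that each subtraction reduces the support to a union of components already handled, is the genuinely delicate part; everything else is formal once the geometry of the diagram defining $\ast$ is understood, closely following the nilpotent case in \cite[\S 12]{L1} and \cite[Prop.~1.18]{B}.
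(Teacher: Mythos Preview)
Your outline is correct and follows the same strategy as \cite[Prop.~1.18]{B}, which the paper cites without reproducing a proof; the triangularity $[Z'][Z_{i,c}]=[Z]+\sum_{|\varepsilon_i(\tilde Z)|>l}a_{\tilde Z}[\tilde Z]$ you are aiming for is exactly what the paper later extracts from Bozec's argument in the proof of Theorem~2.21. Two small points to tighten in your write-up: with the ordering $1_{i,c}\ast f_{Z'}$ and the convention that $f_2$ corresponds to the subobject, it is the \emph{quotient} (not a subrepresentation) that is concentrated at $i$ with dimension $le_i$, and that is what forces the codimension parameter at $i$ to be $\ge l$; and the correction terms $Z''$ all have dimension vector $\gamma$, so they are handled entirely by the secondary descending induction on $|\varepsilon_i|$ (bounded above by $\gamma^i$), not by the outer induction on $|\gamma|$.
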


\subsection{Generalized quantum group}

We recall some definitions and facts about generalized quantum group
introduced in \cite{B}.

Let $(\bullet,\bullet)$ be the symmetric Euler form on
$\mathbb{Z}^{I}$ defined by $(i,j)=2\delta_{ij}-a_{ij}-a_{ji}$, and
$(\iota,j)=l(i,j)$ if $\iota=(i,l)\in
I_{\infty}=(I^{re}\times\{1\})\bigcup
(I^{im}\times\mathbb{N}_{\geqslant1})$ and $j\in I$.

\begin{defn}
Let $F$ be the $\mathbb{Q}(v)$-algebra generated by
$(E_{\iota})_{\iota\in I_{\infty}}$, $\mathbb{N}^{I}$-graded by
$|E_{\iota}|=li$ for $\iota=(i,l)$. If $A\subseteq\mathbb{N}^{I}$,
then let $\mathrm{F}[A]=\{E\in\mathrm{F}||E|\in A\}$.
\end{defn}

For any $\gamma=(\gamma^{i})_{i\in I}\in\mathbb{Z}^{I}$, let ${\rm
ht}(\gamma)=\sum_{i}\gamma^{i}$ be its height, and
$v_{\gamma}=\prod_{i} v_{i}^{\gamma^{i}}$, where
$v_{i}=v^{(i,i)/2}$. We endow $F$ with  a coproduct
$\delta(E_{i,l})=\sum\limits_{l_{1}+l_{2}=l}v_{i}^{l_{1}l_{2}}E_{i,l_{1}}E_{i,l_{2}}$,
where $E_{i,0}=1$. Then for any family $(v_{\iota})_{\iota\in
I_{\infty}}\subseteq\mathbb{Q}(v)$, there is a bilinear form
$\{\bullet,\bullet\}$ on $F$ such that
\begin{itemize}
\item[$\bullet$]$\{E,E'\}=0$ if $|E|\neq|E'|$,
\item[$\bullet$]$\{E_{\iota},E_{\iota}\}=v_{\iota}$, $\forall\iota\in
I_{\infty}$,
\item[$\bullet$]$\{EE',E''\}=\{E\otimes E',\delta(E'')\}$, $\forall E,E',E''\in {\rm
F}$.
\end{itemize}
It turns out that
$\sum\limits_{l_{1}+l_{2}=-(\iota,j)+1}(-1)^{l_{1}}\frac{E_{j,1}^{l_{1}}}{l_{1}!}E_{\iota}\frac{E_{j,1}^{l_{2}}}{l_{2}!}$
is in the radical of $\{\bullet,\bullet\}$.

\begin{defn}
Let $\widetilde{\mathcal {U}}^{+}$ be the quotient of $F$ by the
ideal generated by the above element and the commutators
$[E_{i,l},E_{i,k}]$ for $\omega_{i}=1$. Then $\{\bullet,\bullet\}$
is well-defined on $\widetilde{\mathcal{U}}^{+}$. Let
$\mathcal{U}^{+}$ be the quotient of $\widetilde{\mathcal{U}}^{+}$
by the radical of $\{\bullet,\bullet\}$.
\end{defn}

\begin{thm}(see {\rm \cite[Th.~3.34]{B}})
There is an isomorphism of algebras
\begin{equation}
\begin{array}{ll} \phi:&\mathcal{U}^{+}_{v=1}\rightarrow\mathscr{M}_{0},
\\&E_{i,(l)}\mapsto1_{i,(l)}, \quad i\in I^{im},\\&E_{i,1}\mapsto1_{i,1}, \quad i\in I^{re}.
\end{array}\nonumber
\end{equation}
\end{thm}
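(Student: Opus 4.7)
The overall plan is to first define $\phi$ on the free algebra $F$ by sending the generators as prescribed, then verify the defining relations of $\mathcal{U}^+_{v=1}$ are satisfied in $\mathscr{M}_0$, and finally establish bijectivity using the irreducible-component combinatorics from Section~2.3. Define a $\mathbb{Q}$-algebra homomorphism $\widetilde{\phi} \colon F_{v=1} \to \mathscr{M}$ by $E_{i,(l)} \mapsto 1_{i,(l)}$ for $i \in I^{im}$ and $E_{i,1} \mapsto 1_{i,1}$ for $i \in I^{re}$, extended by the convolution product $\ast$. The image visibly lies in the subalgebra $\mathscr{M}_0$, so it suffices to show $\widetilde{\phi}$ descends first to $\widetilde{\mathcal{U}}^{+}_{v=1}$ and then to $\mathcal{U}^+_{v=1}$, and that the resulting map is an isomorphism onto $\mathscr{M}_0$.

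For the relations, the main computation is with the correspondence diagram
$$\textbf{M}_{\Pi_Q,V_1}^{sp}\times\textbf{M}_{\Pi_Q,V_2}^{sp}\xleftarrow{p_1}\textbf{F}'\xrightarrow{p_2}\textbf{F}''\xrightarrow{p_3}\textbf{M}_{\Pi_Q,V}^{sp}.$$
First, for $\omega_i = 1$, the factorization (4) at a single imaginary vertex gives $\textbf{M}_{\Pi_Q,le_i}^{sp}$ parametrized by partitions, and the convolution $1_{i,l}\ast 1_{i,k}$ can be computed by stratifying $F''$ according to the type of the filtration; one checks the coefficient of each irreducible component is symmetric in $(l,k)$, yielding $[1_{i,l},1_{i,k}]=0$. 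Second, the Serre-type relation at $v=1$ reduces to the vanishing of the signed sum $\sum_{l_1+l_2=-(\iota,j)+1}(-1)^{l_1}\frac{1}{l_1! l_2!}1_{j,1}^{l_1}\ast 1_{i,(l)}\ast 1_{j,1}^{l_2}$, which follows by evaluating each term at a representative in a generic stratum of $\textbf{M}_{\Pi_Q,\gamma}^{sp}$ using the dimension-vector counts $(\iota,j)=l(i,j)$ and applying inclusion-exclusion on flags; this is the standard Ringel-Green style argument adapted to the seminilpotent setting. Third, since the bilinear form $\{\bullet,\bullet\}$ is constructed so that the multiplication on $F$ is dual to the coproduct $\delta$, one verifies that elements in its radical must act by zero on all irreducible components, hence map to zero in $\mathscr{M}_0$.

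Bijectivity is where Theorem~2.10 (the existence of $f_Z$) plus the bijection (4) enter decisively. By (4), the irreducible components of $\textbf{M}_{\Pi_Q,\gamma}^{sp}$ are indexed inductively in terms of simpler components together with a partition/composition $c\in\mathfrak{C}_{i,l}$ and a ``smaller'' component of $\textbf{M}_{\Pi_Q,\gamma-le_i,i,0}^{sp}$; iterating this and tracking the construction under the convolution product shows every $1_Z$ (and hence every $f_Z$) is a $\mathbb{Q}$-linear combination of iterated products of the generators $1_{i,(l)}$ and $1_{i,1}$, giving surjectivity onto $\mathscr{M}_0$. For injectivity one introduces the triangular pairing: order the monomials in the generators by the induced sequences of $(i,l)$-data and show via $\rho_Z$ that the induced matrix is unitriangular, so no nontrivial relation among the images can hold beyond those already imposed on $\mathcal{U}^+_{v=1}$.

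The main obstacle is the Serre-type relation: verifying it at the level of constructible functions requires a careful geometric analysis of the strata $\textbf{M}^{sp}_{\Pi_Q,\gamma,j,\bullet}$ through which representatives factor when one takes products $1_{j,1}^{l_1}\ast 1_{i,(l)}\ast 1_{j,1}^{l_2}$, together with the combinatorial identity $\sum_{l_1+l_2=-(\iota,j)+1}(-1)^{l_1}\binom{-(\iota,j)+1}{l_1}=0$. Bozec treats this by reducing to rank-one calculations at each vertex and exploiting the product structure of (4); essentially the same reduction works here once one has checked that our definition of seminilpotency (nilpotent condition imposed on $a^*$ rather than $a$) is compatible with the argument, as indicated in Remark~2.8.
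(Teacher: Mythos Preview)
The paper does not give its own proof of this statement: it is quoted as a black box from Bozec \cite[Th.~3.34]{B}, with no argument supplied. So there is nothing in the paper to compare your proposal against; what you have written is an independent sketch of how Bozec's theorem might be proved.

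As a sketch it is plausible in outline but has a genuine gap at the step where you pass from $\widetilde{\mathcal{U}}^{+}_{v=1}$ to $\mathcal{U}^{+}_{v=1}$. Saying ``elements in the radical must act by zero on all irreducible components, hence map to zero in $\mathscr{M}_0$'' is the assertion to be proved, not its justification. In the Lusztig/Kang--Schiffmann/Bozec framework this step is handled by constructing a geometric bilinear form on $\mathscr{M}_0$ (via fibre integrals over the correspondence varieties) that matches $\{\bullet,\bullet\}$ under $\widetilde{\phi}$, and then showing this geometric form is nondegenerate --- typically by a character or dimension comparison using the basis $\{f_Z\}$. Your unitriangularity argument for injectivity presupposes exactly this nondegeneracy, so as written the two steps are circular. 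The Serre-relation check and the commutator relation for $\omega_i=1$ are also more delicate than you indicate: one must compute Euler characteristics of the fibres of $p_3$ over generic points of each irreducible component, not merely observe a symmetry of strata. These are the places where Bozec's paper does real work, and your proposal does not yet supply it.
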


\begin{defn}
The  semicanonical basis of  $\,$
${\mathcal U}^{+}_{v=1}$ is
$\phi^{-1}(\{f_{Z}|Z\in{\rm Irr}(\textbf{M}_{\Pi_{Q}}^{sp})\})$.
\end{defn}

\subsection{Semicanonical basis of $\mathcal{H}^0$}

We have already seen that for an appropriate subspace
$\textbf{M}_{\overline{Q},\gamma}^{sp}\subset\textbf{M}_{\overline{Q},\gamma}$,
the degree $0$ part $\mathcal{H}^0$ of COHA is a subalgebra. In
particular, we can take $\textbf{M}_{\overline{Q},\gamma}^{sp}$ to
be the space of seminilpotent representations of $\overline{Q}$.
Then $\textbf{M}_{\Pi_{Q},\gamma}^{sp}$ is the space of
seminilpotent representations in $\textbf{M}_{\Pi_{Q},\gamma}$, and
${\rm
dim}(\textbf{M}_{\Pi_{Q},\gamma}^{sp}/\textbf{G}_{\gamma})=-\chi_{Q}(\gamma,\gamma)$,
so the classes of irreducible components $\{[Z]|Z\in{\rm
Irr}(\textbf{M}_{\Pi_{Q},\gamma}^{sp})\}$ lie in $\mathcal{H}^0$. In
fact, these classes form a basis of $\mathcal{H}^0$ by the following
theorem.

\begin{thm}
Let $X$ be a scheme with top dimensional irreducible components
$\{C^{k}\}$, and a connected algebraic group G acts on it. Then
$H_{c,G}^{2top}(X)$ has a basis one to one corresponding to
$\{C^{k}\}$, where top is the dimension of the stack $X/G$.
\end{thm}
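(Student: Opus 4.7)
The plan is to reduce the equivariant statement to the classical non-equivariant fact that for a scheme $Y$ of dimension $d$, the group $H_c^{2d}(Y,\mathbb{Q})$ admits a basis given by the fundamental classes of its $d$-dimensional irreducible components. This is proved by induction on the number of top-dimensional components using the excision long exact sequence: for irreducible $Y$ of dimension $d$ one has $H_c^{2d}(Y,\mathbb{Q})=\mathbb{Q}\cdot[Y]$, while lower-dimensional subvarieties contribute only to $H_c^{<2d}$.

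To pass to the equivariant setting I would use the Borel construction from Section~2.1. Fix an embedding $G\subset GL(n,\mathbb{C})$ and put $X_N:=X\times_G fr(n,N)$, so by definition
$$H_{c,G}^{\bullet}(X,\mathbb{Q})=\lim_{N\to\infty}\,H_c^{\bullet}(X_N,\mathbb{Q})\otimes\mathbb{T}^{-nN}.$$
For $N\geq n$, $fr(n,N)$ is a smooth irreducible Zariski-open subset of $\mathbb{A}^{nN}$ and $G$ acts on it freely, so $X\times fr(n,N)\to X_N$ is a principal $G$-bundle. Consequently $\dim X_N=\dim X+nN-\dim G$. Since $fr(n,N)$ is irreducible and $G$ is connected (so it fixes each irreducible component of $X\times fr(n,N)$ set-wise), the top-dimensional irreducible components of $X_N$ are in canonical bijection with those $\{C^k\}$ of $X$ via $C^k\mapsto C^k_N:=C^k\times_G fr(n,N)$.

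Applying the non-equivariant fact to $X_N$, the classes $\{[C^k_N]\}$ form a basis of $H_c^{2(\dim X+nN-\dim G)}(X_N,\mathbb{Q})$. Twisting by $\mathbb{T}^{-nN}$ shifts cohomological degree by $-2nN$, placing this basis into degree $2(\dim X-\dim G)=2\,\mathrm{top}$, i.e.\ into $H_{c,G}^{2\,\mathrm{top}}(X,\mathbb{Q})$, with each basis vector identified with the equivariant fundamental class $[C^k]_G$ of the corresponding top-dimensional component.

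The main point to verify is that these classes are compatible with the stabilization maps coming from the inclusions $fr(n,N)\hookrightarrow fr(n,N+1)$, so that they define well-defined elements of the limit. This is the only step that requires genuine (though standard) care: the complement of this inclusion has codimension $N-n+1\to\infty$, so in any fixed cohomological degree the transition maps become isomorphisms for $N$ large, and they send fundamental classes of top components to fundamental classes of top components. Once this stabilization is in place the theorem follows immediately.
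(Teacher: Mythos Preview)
Your proof is correct and follows essentially the same route as the paper: both use the finite-dimensional approximations $X_N=X\times_G fr(n,N)$, identify their top-dimensional irreducible components with the $C^k$ (using that $fr(n,N)$ is irreducible and $G$ is connected), and invoke the classical fact that top-degree $H_c$ (equivalently, top Borel--Moore homology, which is how the paper phrases it via \cite{CG}) is freely spanned by the fundamental classes of top components. The only difference is cosmetic---the paper passes through $H^{BM}_{2\bullet}\cong (H_c^{2\bullet})^\vee$ whereas you work directly with $H_c$---and you add an explicit word about stabilization that the paper leaves implicit in the limit.
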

\begin{proof} Choose an embedding $G\hookrightarrow GL(n,\mathbb{C})$. Let
$fr(n,N)$ be the space of $n$-tuples of linearly independent vectors
in $\mathbb{C}^{N}$ for $N\geqslant n$. Then $X\times fr(n,N)$ has
irreducible components $\{C^{k}\times fr(n,N)\}$, thus
$X\times_{G}fr(n,N)=(X\times fr(n,N))/G$ has irreducible components
$\{\overline{C^{k}}\}$ one to one corresponding to $\{C^{k}\}$ since
$G$ is irreducible. Then the Borel-Moore homology
$H_{2\bullet}^{BM}(X\times_{G}fr(n,N))$ has a basis
$\{[\overline{C^{k}}]\}$, where $\bullet=\dim (X)+\dim
(fr(n,N))-\dim G$, implying that
$H_{c}^{2\bullet}(X\times_{G}fr(n,N))^{\vee}=H_{2\bullet}^{BM}(X\times_{G}
fr(n,N))$ has basis one to one corresponding to $\{C^{k}\}$ (For
details of Borel-Moore homology, see \cite[Section~2.6]{CG}). Then
$H_{c,G}^{2top}(X)=\lim\limits_{N\rightarrow\infty}H_{c}^{2\bullet}(X\times_{G}fr(n,N))\otimes\mathbb{T}^{-\dim
fr(n,N)}$ has basis one to one corresponding to $\{C^{k}\}$, where
$top=\bullet-\dim (fr(n,N))=\dim (X/G)$.
\end{proof}

\begin{defn} We call the basis defined above the {\it
semicanonical basis} of the subalgebra $\mathcal{H}^0$.
\end{defn}

Given an element $\mathcal {F}$ in $\mathscr{D}^{b}(X)$ with
constructible cohomology, and $x\in X$, the function
$\chi(\mathcal{F})(x)=\chi(\mathcal{F}_{x})=\sum_{i}(-1)^{i}{\rm
dim}(H^{i}(\mathcal{F}_{x}))$ is constructible. Moreover, the standard
operations (pullback, pushforward, etc.) in $\mathscr{D}^{b}(X)$ and the corresponding operations on
constructible functions are compatible.

Recall the family of constructible functions $\{f_{Z}|Z\in {\rm
Irr}(\textbf{M}_{\Pi_{Q}}^{sp})\}$. Then $U_{Z}=f_{Z}^{-1}(1)$ is constructible. Let $f_{Z,N}$ be the characteristic function
of $\overline{(U_{Z},\textbf{G}_{\gamma})}_{N}$, and
$\mathbb{Q}_{Z,N}$ be the constant sheaf on
$\overline{(U_{Z},\textbf{G}_{\gamma})}_{N}$. Since the operations
on constructible functions and constructible sheaves agree, there is
an isomorphism of algebras
$\Psi:\mathcal{H}^0\rightarrow\mathscr{M}_{0}^{op},[Z]\mapsto
f_{Z}$. It is obtained by taking the dual of compactly supported
cohomology and passing to the limit.

Furthermore, notice that
$\mathcal{H}^0\simeq (\mathcal {U}^{+}_{v=1})^{op}$, and that
Lusztig's product $\ast$ is opposite to the product of COHA (see the
end of Section 2.3).

The semicanonical basis of $\mathcal{H}^0$ is compatible with a certain filtration.
More precisely, we have the following result.

\begin{thm}
Fix $d=(d_{i})\in\mathbb{Z}_{\geqslant0}^{I}$. Then the subspace
spanned by $\{[Z]|\exists i, s.t.|\varepsilon_{i}(Z)|\geqslant
d_{i}\}$ coincides with $\sum\limits_{i\in I,|c|=d_{i}}\mathcal
{H}^{0}[Z_{i,c}]$, where $Z_{i,c}\in{\rm
Irr}(\textbf{M}_{\Pi_{Q},le_{i}}^{sp})$ is the irreducible component
corresponding to $c$ (defined in Section 2.3), and $c=l$ if $i\in
I^{re}$.
\end{thm}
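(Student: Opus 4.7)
The plan is to prove both inclusions separately, using the stack--theoretic description of the COHA product, the bijection (4) of Section~2.3, and an upper--triangular product formula in the semicanonical basis.

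The easy direction, $\sum_{i,|c|=d_i}\mathcal{H}^{0}[Z_{i,c}]\subseteq\mathrm{span}\{[Z]:\exists\,i,\ |\varepsilon_i(Z)|\geq d_i\}$, follows from the stack formulation at the end of Section~2.2: for $[W]\cdot[Z_{i,c}]$ with $|c|=d_i$, the product is the pushforward of a class where $[Z_{i,c}]$ sits on the quotient side, supported on representations $M$ admitting a quotient $M\twoheadrightarrow N$ with $N\in\overline{Z_{i,c}}$ of dimension $d_i e_i$. For any such $M$ the surjection $M_i\twoheadrightarrow N$ factors through the head $M_i/\sum_{j\neq i,\ a:j\to i}\mathrm{Im}(x_a)$ since $N_j=0$ for $j\neq i$, giving $|\varepsilon_i(M)|\geq d_i$. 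Expanding the product in the semicanonical basis thus produces only basis vectors with $|\varepsilon_i|\geq d_i$.

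For the reverse inclusion, fix $Z$ with $|\varepsilon_i(Z)|=l\geq d_i$ for a chosen $i$, set $c=\varepsilon_i(Z)$ (or $c=l$ for $i\in I^{re}$), and apply (4) to obtain $Z'\in\mathrm{Irr}(\textbf{M}_{\Pi_Q,\gamma-le_i,i,0}^{sp})$ with $(Z',Z_{i,c})\leftrightarrow Z$. The central step is an upper--triangular formula
\[
[Z']\cdot[Z_{i,c}]=a_Z[Z]+\sum_{Z''\,:\,|\varepsilon_i(Z'')|>l}a_{Z''}[Z''],\qquad a_Z\neq 0,
\]
obtained by analyzing the pushforward $\textbf{M}_{\Pi_Q,\gamma-le_i,le_i}/\textbf{G}_{\gamma-le_i,le_i}\to\textbf{M}_{\Pi_Q,\gamma}/\textbf{G}_\gamma$: upper semi--continuity of $|\varepsilon_i|$ confines the image to $\bigcup_{l'\geq l}\textbf{M}_{\Pi_Q,\gamma,i,l'}^{sp}$, and on the open stratum $\textbf{M}_{\Pi_Q,\gamma,i,l}^{sp}$ the map is birational with generic fiber determined by (4), pinning down $Z$ as the unique leading component while closed strata with $l'>l$ supply the correction. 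Downward induction on $l$ (bounded above by $\gamma^i$) then handles the corrections, reducing the problem to showing $[Z']\cdot[Z_{i,c}]\in\sum_{|c''|=d_i}\mathcal{H}^0[Z_{i,c''}]$.

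This last reduction takes place inside the subalgebra of $\mathcal{H}^0$ associated with the one--vertex loop--quiver at $i$: one writes $[Z_{i,c}]$ as a sum of products $[A]\cdot[Z_{i,c''}]$ with $|c''|=d_i$. For $i\in I^{re}$ this is the divided--power identity $[Z_{i,l-d_i}]\cdot[Z_{i,d_i}]=\binom{l}{d_i}[Z_{i,l}]$; for $i\in I^{im}$ one splits the composition or partition $c$ into pieces of sizes $l-d_i$ and $d_i$ and invokes the analogous triangular formula inside the pure--$i$ subalgebra (which is $\mathcal{U}^+_{v=1}$ attached to the single--vertex loop quiver). The main obstacle is precisely this upper--triangular product formula: it is the cohomological counterpart of the inductive definition of $f_Z$ in Baumann's \cite[Prop.~1.18]{B} (our Theorem~2.8), descends from Lusztig's original argument in \cite[Section~12]{L1}, and requires careful tracking of how the stratification by $|\varepsilon_i|$ intersects the image of the COHA pushforward map to identify $[Z]$ as the unique leading term.
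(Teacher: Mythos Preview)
Your proposal is correct and follows the same route as the paper: both directions rest on the upper--triangular identity $[Z'][Z_{i,c}]=[Z]+\sum_{|\varepsilon_i(\widetilde Z)|>l}a_{\widetilde Z}[\widetilde Z]$ taken from the proof of \cite[Prop.~1.18]{B} (the reference is Bozec, not Baumann), followed by descending induction on $l\leq\gamma^i$. Your final splitting step, rewriting $[Z_{i,c}]$ with $|c|=l$ in terms of $[Z_{i,c''}]$ with $|c''|=d_i$ inside the one--vertex subalgebra, makes explicit a reduction the paper's short proof leaves implicit.
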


\begin{proof}
By definitions, $\sum\limits_{i\in I,|c|=d_{i}}\mathcal
{H}^{0}[Z_{i,c}]$ is contained in the subspace spanned by
$\{[Z]|\exists i, s.t.|\varepsilon_{i}(Z)|\geqslant d_{i}\}$. To
prove the reverse inclusion it suffices to show that for any $i\in
I$, $\gamma\in\mathbb{Z}_{\geqslant0}^{I}$, and
$[Z]\in\mathcal{H}^0$ such that $Z\in{\rm
Irr}(\textbf{M}_{\Pi_{Q},\gamma}^{sp})$ and
$|\varepsilon_{i}(Z)|=l$, we have $[Z]\in\sum\limits_{|c|=l}\mathcal
{H}^{0}[Z_{i,c}]$. We use descending induction on
$l\leqslant\gamma^{i}$. For above $Z$, we have
$\gamma-le_{i}\in\mathbb{N}^{I}$, and by the proof of
\cite[Pro.~1.18]{B}, there exists a unique $Z'\in{\rm
Irr}(\textbf{M}_{\Pi_{Q},\gamma-le_{i}}^{sp})$ and $Z_{i,c}\in{\rm
Irr}(\textbf{M}_{\Pi_{Q},le_{i}}^{sp})$ such that
$|\varepsilon_{i}(Z')|=0$ and
$[Z'][Z_{i,c}]=Z+\sum\limits_{|\varepsilon_{i}(\widetilde{Z})|>l}a_{\widetilde{Z}}[\widetilde{Z}]$
for some $a_{\widetilde{Z}}\in\mathbb{Q}$. By applying the induction
hypothesis to $\widetilde{Z}$ we have that the subspace spanned by
$\{[Z]|\exists i, s.t.|\varepsilon_{i}(Z)|\geqslant d_{i}\}$ is
contained in $\sum\limits_{i\in I,|c|=d_{i}}\mathcal
{H}^{0}[Z_{i,c}]$. Thus the two subspaces coincide.
\end{proof}

The dual of representations of $\Pi_{Q}$ induces a bijection $\ast:
{\rm Irr}(\textbf{M}_{\Pi_{Q},\gamma}^{sp})\rightarrow{\rm
Irr}(\textbf{M}_{\Pi_{Q},\gamma}^{sp}), Z\mapsto Z^{\ast}$, thus an
antiautomorphism of $\mathcal {H}^{0}$. Then the dual of the above
theorem holds:

\begin{thm}
The subspace spanned by $\{[Z]|\exists i,
s.t.|\varepsilon_{i}(Z^{\ast})|\geqslant d_{i}\}$ coincides with
$\sum\limits_{i\in I,|c|=d_{i}}[Z_{i,c}]\mathcal {H}^{0}$.
\end{thm}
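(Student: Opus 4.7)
The plan is to derive this theorem from Theorem 2.11 by transporting the latter through the antiautomorphism $\ast$ of $\mathcal{H}^{0}$ induced by the duality of $\Pi_{Q}$-representations, introduced just before the statement. No new geometry is required: everything reduces to tracking how $\ast$ acts on the generating set $\{[Z_{i,c}]\}$ and on the spanning set $\{[Z]\}$.

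First, observe that for each dimension vector $\gamma$ the duality is an involution on ${\rm Irr}(\textbf{M}_{\Pi_{Q},\gamma}^{sp})$; in particular it preserves the dimension vector. Applying $\ast$ to the identity of Theorem 2.11 and using that $\ast$ is an antiautomorphism converts $\mathcal{H}^{0}[Z_{i,c}]$ into $[Z_{i,c}]^{\ast}\mathcal{H}^{0} = [Z_{i,c}^{\ast}]\,\mathcal{H}^{0}$. Since $Z_{i,c}^{\ast}$ is again an irreducible component of $\textbf{M}_{\Pi_{Q},d_{i}e_{i}}^{sp}$, we have $Z_{i,c}^{\ast} = Z_{i,c'}$ for some partition or composition $c'$ of $d_{i}$. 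Because $\ast$ is an involution on ${\rm Irr}(\textbf{M}_{\Pi_{Q},d_{i}e_{i}}^{sp})$, the assignment $c \mapsto c'$ is a bijection on the index set $\{c : |c| = d_{i}\}$, so
$$\sum_{i \in I,\, |c| = d_{i}} [Z_{i,c}]^{\ast}\,\mathcal{H}^{0} \;=\; \sum_{i \in I,\, |c| = d_{i}} [Z_{i,c}]\,\mathcal{H}^{0},$$
which is the right-hand side of Theorem 2.12.

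For the left-hand side of Theorem 2.11, the image under $\ast$ of the span of $\{[Z] : \exists\, i,\ |\varepsilon_{i}(Z)| \geqslant d_{i}\}$ is the span of $\{[Z^{\ast}] : \exists\, i,\ |\varepsilon_{i}(Z)| \geqslant d_{i}\}$; setting $Z' := Z^{\ast}$ and using involutivity this rewrites as the span of $\{[Z'] : \exists\, i,\ |\varepsilon_{i}((Z')^{\ast})| \geqslant d_{i}\}$, which is exactly the left-hand side of Theorem 2.12. Combining the two computations with Theorem 2.11 yields the claim.

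The only non-routine point is to confirm that $\ast$ restricts to an involution on each ${\rm Irr}(\textbf{M}_{\Pi_{Q},d_{i}e_{i}}^{sp})$ and sends generators $Z_{i,c}$ to generators $Z_{i,c'}$ with $|c'|=|c|$; both are immediate from the facts that representation duality preserves dimension vectors and is its own inverse. One does not need to identify the induced permutation $c \mapsto c'$ explicitly — stability of the entire indexing set $\{|c|=d_{i}\}$ under $\ast$ is all that the argument uses.
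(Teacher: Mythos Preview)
Your proposal is correct and follows exactly the approach the paper intends: the paper does not give a separate proof but simply states that this is ``the dual of the above theorem'' via the antiautomorphism $\ast$, and your argument is a careful spelling-out of that duality. The only content beyond the paper's one-line justification is your observation that $\ast$ permutes the set $\{Z_{i,c}:|c|=d_i\}$, which is indeed all that is needed and follows immediately from $\ast$ being an involution preserving dimension vectors.
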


\section{$2CY$ categories and Donaldson-Thomas series}

In Section 2 we discussed the semicanonical basis obtained as a result of the
dimensional reduction from $3CY$ category to the $2$-dimensional category.
In this section we are going to discuss DT-series for $2CY$ categories.

\subsection{Motivic stack functions and motivic Hall algebras: reminder}

Let $X$ be a constructible set over a field $\textbf{k}$ of
characteristic zero, $G$ an affine algebraic group acting on $X$.
In this section we are going to recall the definition of the abelian group of stack functions $Mot_{st}((X,G))$ following
\cite[Section~4]{KoSo2} (see also \cite{Jo1} for a different exposition).

Let us consider  the following 2-category of constructible
stacks over ${\bf k}$. Objects are pairs $(X,G)$, where
$X$ is a constructible set, and $G$ is an affine algebraic group
acting on it.  The category of
1-morphisms Hom$((X_{1},G_{1}),(X_{2},G_{2}))$ consists of pairs $(Z,f)$,
where $Z$ is a $G_{1}\times G_{2}$-constructible set such that
$\{e\}\times G_{2}$ acts freely on $Z$ in such a way that we have
the induced $G_{1}$-equivariant isomorphism $Z/G_{2}\simeq X_{1}$,
and $f:Z\rightarrow X_{2}$ is a $G_{1}\times G_{2}$-equivariant map
with trivial action of $G_{1}$ on $X_{2}$.
Furthermore, objects of Hom$((X_{1},G_{1}),(X_{2},G_{2}))$ form
naturally a groupoid. The 2-category of constructible
stacks carries a direct sum operation induced by disjoint union of
stacks.

After the above preliminaries we define the  group of  motivic stack functions $Mot_{st}((X,G))$ as the abelian group
generated by isomorphism classes of 1-morphisms of stacks
$[(Y,H)\rightarrow(X,G)]$ with the fixed target $(X,G)$, subject to
the relations
\begin{itemize}
\item[$\bullet$]$[((Y_{1},H_{1})\sqcup(Y_{2},H_{2}))\rightarrow(X,G)]=[(Y_{1},H_{1})\rightarrow(X,G)]+[(Y_{2},H_{2})\rightarrow(X,G)]$,
\item[$\bullet$]$[(Y_{2},H)\rightarrow(X,G)]=[(Y_{1}\times \mathbb{A}_{k}^{d},H)\rightarrow(X,G)]$ if $Y_{2}\rightarrow Y_{1}$
 is an $H$-equivariant constructible vector bundle of rank $d$.
\end{itemize}

One can define pullbacks, pushforwards and fiber products of elements of
$Mot_{st}((X,G))$ in the natural way (see loc.cit.).


Let $\mathcal {C}$ be an ind-constructible locally regular (e.g. locally Artin) triangulated
$A_{\infty}$-category over a field ${\bf k}$ (see \cite{KoSo2}).
Then the stack of objects admits a
countable decomposition into the union of quotient stacks $\mathcal
{O}b(\mathcal{C})=\sqcup_{i\in I}(Y_{i}, GL(N_i))$, where $Y_i$ is a reduced
algebraic scheme acted by the group $GL(N_i)$.

\begin{defn} (cf. {\rm \cite{KoSo2}})
The motivic Hall algebra $H(\mathcal{C})$ is the
$Mot(Spec(\textbf{k}))-$module \\$\bigoplus_{i\in I}Mot_{st}(Y_{i},
GL(N_{i}))[\mathbb{L}^{n}, n<0]$ (i.e. we extend the direct sum of
the groups of motivic stack functions by adding negative powers of
the Lefschetz motive ${\mathbb L}$), endowed with the product
defined below.
\end{defn}

The product is defined as follows. Let us denote $\dim Ext^{i}(E,F)$
by $(E,F)_{i}$, and use the truncated Euler characteristic
$(E,F)_{\leq N}=\sum_{i\leq N}(-1)^{i}(E,F)_{i}$. Let $[\pi_{i}:
Y_{i}\rightarrow\mathcal {O}b(\mathcal{C})], i=1, 2$ be two elements
of $H(\mathcal{C})$, then for any $n\in\mathbb{Z}$ we have
constructible sets $$W_{n}=\{(y_{1}, y_{2}, \alpha)|y_{i}\in Y_{i},
\alpha\in Ext^{1}(\pi_{2}(y_{2}), \pi_{1}(y_{1})), (\pi_{2}(y_{2}),
\pi_{1}(y_{1}))_{\leq 0}=n\}.$$ Then
$[tot((\pi_{1}\times\pi_{2})^{\ast}(\mathcal {E}\mathcal {X}\mathcal
{T}^{1}))\rightarrow\mathcal
{O}b(\mathcal{C})]=\sum_{n\in\mathbb{Z}}[W_{n}\rightarrow\mathcal
{O}b(\mathcal{C})]$. Define the product $[Y_{1}\rightarrow\mathcal
{O}b(\mathcal{C})]\cdot[Y_{2}\rightarrow\mathcal
{O}b(\mathcal{C})]=\sum_{n\in\mathbb{Z}}[W_{n}\rightarrow\mathcal
{O}b(\mathcal{C})]\mathbb{L}^{-n}$, where the map
$W_{n}\rightarrow\mathcal {O}b(\mathcal{C})$ is given by $(y_{1},
y_{2}, \alpha)\mapsto Cone(\alpha:
\pi_{2}(y_{2})[-1]\rightarrow\pi_{1}(y_{1}))$.

\begin{thm} (see {\rm \cite[Prop.~10]{KoSo2}})
The algebra $H(\mathcal{C})$ is associative.
\end{thm}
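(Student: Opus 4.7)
The plan is to establish associativity by computing both iterated triple products on a common geometric space parameterizing $3$-step filtered objects, using the octahedral axiom to identify the two possible orders of building the filtration and the motivic-class identities in $Mot_{st}$ to match the Lefschetz twists.

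First I would unfold both sides of the putative identity. Applying the definition twice, $([Y_{1}]\cdot[Y_{2}])\cdot[Y_{3}]$ presents as a sum of classes $[W^{(12)3}\to\mathcal{O}b(\mathcal{C})]$ weighted stratum by stratum by $\mathbb{L}^{-(E_{2},E_{1})_{\leq 0}-(E_{3},E_{12})_{\leq 0}}$, where $W^{(12)3}$ parameterizes tuples $(y_{1},y_{2},y_{3},\alpha_{12},\alpha_{(12)3})$ with $\alpha_{12}\in Ext^{1}(\pi_{2}(y_{2}),\pi_{1}(y_{1}))$ and $\alpha_{(12)3}\in Ext^{1}(\pi_{3}(y_{3}),E_{12})$, using the shorthand $E_{i}:=\pi_{i}(y_{i})$, $E_{12}:=Cone(\alpha_{12})$, with target the double iterated cone $F$. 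Symmetrically, $[Y_{1}]\cdot([Y_{2}]\cdot[Y_{3}])$ unfolds to $[W^{1(23)}\to\mathcal{O}b(\mathcal{C})]$ with tuples $(y_{1},y_{2},y_{3},\alpha_{23},\alpha_{1(23)})$, weight $\mathbb{L}^{-(E_{3},E_{2})_{\leq 0}-(E_{23},E_{1})_{\leq 0}}$, and the same target $F$.

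Next I would introduce an intermediate stack $\mathcal{F}il_{3}(Y_{1},Y_{2},Y_{3})$ parameterizing $3$-step filtered objects $0=F_{0}\subset F_{1}\subset F_{2}\subset F_{3}$ equipped with isomorphisms $F_{i}/F_{i-1}\simeq\pi_{i}(y_{i})$ for $y_{i}\in Y_{i}$. The octahedral axiom in $\mathcal{C}$ supplies canonical projections $W^{(12)3}\to\mathcal{F}il_{3}$ and $W^{1(23)}\to\mathcal{F}il_{3}$ that forget the way a given filtration is presented as an iterated cone. Working on a constructible stratification of $Y_{1}\times Y_{2}\times Y_{3}$ on which every $\dim Ext^{i}(E_{j},E_{k})$ and every relevant boundary-map rank is locally constant, both projections become constructible equivariant affine bundles whose fiber dimensions can be read off from the long exact sequences in $Ext^{\bullet}$ attached to the triangles $E_{1}\to E_{12}\to E_{2}$ and $E_{2}\to E_{23}\to E_{3}$. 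Using the motivic relation that an $H$-equivariant constructible affine bundle of rank $d$ has class $\mathbb{L}^{d}$ times the base (extended to negative ranks by inverting $\mathbb{L}$ in the coefficient ring), both unfoldings reduce to a common expression $[\mathcal{F}il_{3}\to\mathcal{O}b(\mathcal{C})]\cdot\mathbb{L}^{-N}$ for a single Euler weight $N$, and the octahedral compatibility of the three boundary maps involved shows that the two computations of $N$ obtained from the two sides of associativity coincide.

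The main obstacle is this final weight balancing: the truncated Euler characteristic $(-,-)_{\leq 0}$ is not additive in arbitrary distinguished triangles, because the boundary maps at the truncation degree contribute correction terms to $(E_{3},E_{12})_{\leq 0}$ versus $(E_{3},E_{1})_{\leq 0}+(E_{3},E_{2})_{\leq 0}$, and analogously for $(E_{23},E_{1})_{\leq 0}$. These corrections must be exactly absorbed by the fiber-dimension discrepancies between the two affine bundles $W^{(12)3}\to\mathcal{F}il_{3}$ and $W^{1(23)}\to\mathcal{F}il_{3}$; verifying the cancellation requires carefully tracking the octahedral relations among the three triangles, and the ind-constructibility and local regularity of $\mathcal{C}$ are what guarantee the existence of the stratifications and the constructibility of all fibers that make the motivic calculation well-defined.
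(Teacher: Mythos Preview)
The paper does not give its own proof of this statement; it is quoted from \cite[Prop.~10]{KoSo2} without argument. Your outline --- reducing both iterated triple products to a common stack $\mathcal{F}il_{3}$ of $3$-step filtered objects via the (functorial, in the $A_\infty$ setting) octahedral axiom, and then balancing the Lefschetz weights against the fibre corrections coming from the boundary maps in the long exact $Ext^{\bullet}$ sequences --- is precisely the strategy of the cited reference, and your identification of the non-additivity of the truncated Euler pairing $(-,-)_{\leq 0}$ as the crux is accurate.

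One remark on your framing: the two parameter spaces $W^{(12)3}$ and $W^{1(23)}$ are genuinely \emph{not} isomorphic as constructible sets in general (already for three simples over a Dynkin quiver one can check they have different motivic classes), so your description of the projections to $\mathcal{F}il_{3}$ as affine fibrations with possibly different fibre dimensions is the correct picture, not an overcautious one. The cancellation you anticipate does occur: on a fine enough stratification the rank of the boundary map $\partial^{0}:Hom^{0}(E_{3},E_{2})\to Hom^{1}(E_{3},E_{1})$ induced by $\alpha_{12}$, together with the analogous rank on the other side, exactly accounts for both the weight defect and the fibre-dimension defect, and the motivic vector-bundle relation in $Mot_{st}$ finishes the comparison. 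This is indeed the technical heart of the argument in \cite{KoSo2}.
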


For a constructible stability condition on $\mathcal{C}$ with an
ind-constructible class map $cl:
K_{0}(\mathcal{C})\rightarrow\Gamma$, a central charge $Z:
\Gamma\rightarrow\mathbb{C}$, a strict sector
$V\subset\mathbb{R}^{2}$ and a branch Log of the logarithm function
on $V$, we have (see \cite{KoSo2}) the category
$\mathcal{C}_{V}:=\mathcal{C}_{V, Log}$ generated by semistables with the central charge in $V$.  Then we define the corresponding
completed motivic Hall algebra
$\widehat{H}(\mathcal{C}_{V}):=\prod\limits_{\gamma\in(\Gamma\cap
C(V,Z,Q))\cup\{0\}}H(\mathcal{C}_{V}\cap cl^{-1}(\gamma))$. It
contains an invertible element $A_{V}^{Hall}=1+\cdots=\sum_{i\in
I}\textbf{1}_{(\mathcal {O}b(\mathcal{C}_{V})\cap Y_{i},
GL(N_{i}))}$, where $1$ comes from the zero object. The element
$A_V$ corresponds (roughly) to the sum over all isomorphism classes
of objects of ${\mathcal C}_V$, each counted with the weight given
by the inverse to the motive of the group of automorphisms.

\begin{thm} (see {\rm \cite[Prop.~11]{KoSo2}})
The elements $A_{V}^{Hall}$ satisfy the Factorization Property:
$$A_{V}^{Hall}=A_{V_{1}}^{Hall}\cdot A_{V_{2}}^{Hall}$$
for a strict sector $V=V_{1}\sqcup V_{2}$ (decomposition in the
clockwise order).
\end{thm}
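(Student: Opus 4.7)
The plan is to exploit the Harder–Narasimhan (HN) formalism attached to the stability condition. Every object $E$ of $\mathcal{C}_V$ has a unique HN filtration $0 = E^{(0)} \subset E^{(1)} \subset \cdots \subset E^{(n)} = E$ whose successive quotients are semistable with central charges in $V$ and with strictly decreasing phases. The clockwise splitting $V = V_1 \sqcup V_2$ cuts this filtration at a unique position: there is a unique $k$ such that all subquotients of $E^{(k)}$ have phase belonging to (say) $V_1$ while all subquotients of $E/E^{(k)}$ have phase in $V_2$. This assigns to $E$ a canonical short exact sequence
\begin{equation*}
0 \to E_1 \to E \to E_2 \to 0, \qquad E_1 \in \mathcal{C}_{V_1},\ E_2 \in \mathcal{C}_{V_2},
\end{equation*}
and conversely the HN filtration of $E$ is reconstructed from the HN filtrations of $E_1$ and $E_2$. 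So the assignment $E \mapsto (E_1, E_2, \alpha)$, where $\alpha \in \Ext^{1}(E_2, E_1)$ is the extension class, is a bijection between $\mathcal{O}b(\mathcal{C}_V)$ and the total space of $\mathcal{EXT}^{1}$ over $\mathcal{O}b(\mathcal{C}_{V_1}) \times \mathcal{O}b(\mathcal{C}_{V_2})$.

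The second step is to compute $A_{V_1}^{Hall} \cdot A_{V_2}^{Hall}$ directly from the definition recalled in Section 3.1 and recognize the answer as $A_{V}^{Hall}$. By construction the product sums, over the pieces of the constructible stratifications of $\mathcal{O}b(\mathcal{C}_{V_1})$ and $\mathcal{O}b(\mathcal{C}_{V_2})$, exactly the strata $W_n \to \mathcal{O}b(\mathcal{C})$ whose points are triples $(y_1, y_2, \alpha)$ mapped to $\mathrm{Cone}(\alpha\colon \pi_2(y_2)[-1] \to \pi_1(y_1))$. The HN bijection of the previous paragraph identifies the disjoint union of these triples, as a constructible stack, with $\mathcal{O}b(\mathcal{C}_V)$, so comparing the expansions strata by strata is the correct template. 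What remains is to check that the motivic weights on both sides coincide.

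The critical input for the weight comparison is a stability-theoretic vanishing: for $E_i \in \mathcal{C}_{V_i}$ with the chosen clockwise ordering one has $\Ext^{i}(E_2, E_1) = 0$ for $i \leq 0$, since maps from a ``smaller-phase'' semistable to a ``larger-phase'' one are zero and the $A_\infty$-category is concentrated in nonnegative $\Ext$ degrees with respect to the heart determined by the stability condition. Therefore the truncated Euler characteristic $(\pi_2(y_2), \pi_1(y_1))_{\leq 0}$ that enters the Hall product vanishes on all strata appearing in $A_{V_1}^{Hall} \cdot A_{V_2}^{Hall}$, so no $\mathbb{L}^{-n}$ twist is introduced and the total space of $\mathcal{EXT}^{1}$ is left untwisted. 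Tracking automorphism stabilizers then matches the $Mot_{st}$-class of extensions to the $Mot_{st}$-class of $\mathcal{O}b(\mathcal{C}_V)$ (the group of automorphisms of $E$ modulo the extension symmetries is exactly $\mathrm{Aut}(E_1) \times \mathrm{Aut}(E_2)$ modulo a trivial $\mathrm{Hom}(E_2, E_1) = 0$ factor), yielding the desired equality termwise in the completed Hall algebra $\widehat{H}(\mathcal{C}_V)$.

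The step I expect to be the main obstacle is precisely this motivic bookkeeping: verifying that all the stack-function relations (vector bundle collapse, free quotient identifications, fiber products) combine with the vanishing of $(E_2, E_1)_{\leq 0}$ to produce an equality, rather than an equality only up to an $\mathbb{L}$-power. One must also argue that the sum makes sense in the completion $\widehat{H}(\mathcal{C}_V) = \prod_\gamma H(\mathcal{C}_V \cap cl^{-1}(\gamma))$: for any fixed $\gamma$ only finitely many decompositions $\gamma = \gamma_1 + \gamma_2$ with $\gamma_i$ supported in $V_i$ contribute, which is guaranteed by the strictness of the sector together with the cone condition $C(V, Z, Q)$. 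Once these compatibilities are in hand, the HN bijection forces $A_V^{Hall} = A_{V_1}^{Hall} \cdot A_{V_2}^{Hall}$.
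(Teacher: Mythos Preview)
The paper does not prove this theorem; it merely cites \cite[Prop.~11]{KoSo2}. Your Harder--Narasimhan approach is indeed the argument used there, and the overall architecture of your proposal is correct: the HN truncation gives an equivalence between the stack of objects of $\mathcal{C}_V$ and the stack of short exact sequences $0\to E_1\to E\to E_2\to 0$ with $E_i\in\mathcal{C}_{V_i}$, and unwinding the Hall product then yields the factorization.

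However, your ``critical input'' is wrong. You assert that $\Ext^i(E_2,E_1)=0$ for $i\le 0$, in particular $\Hom(E_2,E_1)=0$, on the grounds that ``maps from a smaller-phase semistable to a larger-phase one are zero.'' The vanishing goes the other way: for semistables $F,G$ with $\varphi(F)>\varphi(G)$ one has $\Hom(F,G)=0$, not $\Hom(G,F)=0$. Since $V_1$ precedes $V_2$ clockwise, $E_1$ has the larger phase and $E_2$ the smaller, so $\Hom(E_1,E_2)=0$ but $\Hom(E_2,E_1)$ can be nonzero (e.g.\ $\Hom(\mathcal{O}(-1),\mathcal{O}(1))\ne 0$ on $\mathbb{P}^1$). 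Consequently $(E_2,E_1)_{\le 0}=\dim\Hom(E_2,E_1)$ is in general positive, and the twist $\mathbb{L}^{-n}$ does appear in the product.

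This does not break the argument, but it changes the mechanism. The factor $\mathbb{L}^{-\dim\Hom(E_2,E_1)}$ is precisely what is needed: automorphisms of a short exact sequence fixing $E_1$ and $E_2$ form the additive group $\Hom(E_2,E_1)$, so the groupoid of extensions with fixed endpoints has motivic class $\mathbb{L}^{\dim\Ext^1(E_2,E_1)-\dim\Hom(E_2,E_1)}$. When you sum over $[E_1],[E_2]$ with weights $1/[\mathrm{Aut}\,E_i]$ this matches $\sum_{[E]}1/[\mathrm{Aut}\,E]$ via the HN equivalence. In short, the parenthetical ``trivial $\Hom(E_2,E_1)=0$ factor'' in your last paragraph is exactly the place where the argument is incorrect; replace the false vanishing by this automorphism bookkeeping and the proof goes through.
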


Let's fix the following data:
\begin{itemize}
\item[(1)]a triple $(\Gamma, \langle\bullet,\bullet\rangle, Q)$ consisting of a
free abelian group $\Gamma$ of finite rank endowed with a bilinear
form $\langle\bullet,\bullet\rangle:
\Gamma\otimes\Gamma\rightarrow\mathbb{Z}$, and a quadratic form $Q$
on $\Gamma_{\mathbb{R}}=\Gamma\otimes\mathbb{R}$,
\item[(2)]an ind-constructible ,
$Gal(\overline{\textbf{k}}/\textbf{k})-$ equivariant homomorphism
$cl_{\overline{\textbf{k}}}:
K_{0}(\mathcal{C}(\overline{\textbf{k}}))\rightarrow\Gamma$
compatible with the Euler form of $\CC$ and the bilinear form
$\langle\bullet,\bullet\rangle$,
\item[(3)]a constructible stability condition $\sigma\in Stab(\mathcal{C},
cl)$ compatible with the quadratic form $Q$ in the sense that
$Q|_{Ker(Z)}<0$ and $Q(cl_{\overline{\textbf{k}}}(E))\geq0$,
$\forall E\in \mathcal{C}^{ss}(\overline{\textbf{k}})$.
\end{itemize}

We define the  quantum torus $\mathcal {R}_{\Gamma, R}$ over a given
commutative unital ring $R$ containing an invertible symbol
$\mathbb{L}^{\frac{1}{2}}$ as an $R$-linear associative algebra
$\mathcal {R}_{\Gamma,
R}:=\bigoplus_{\gamma\in\Gamma}R\cdot\widehat{e}_{\gamma}$, where
the generators $\widehat{e}_{\gamma}, \gamma\in\Gamma$ satisfy the
relations $\widehat{e}_{\gamma_{1}}\widehat{e}_{\gamma_{2}}={\mathbb
L}^{{1\over{2}}\langle \gamma_1,\gamma_2\rangle}
\widehat{e}_{\gamma_{1}+\gamma_{2}}$, $\widehat{e}_{0}=1$. For any
strict sector $V\subset\mathbb{R}^{2}$, we define the quantum torus
associated with $V$ by $\mathcal
{R}_{V,R}:=\prod\limits_{\gamma\in\Gamma\cap
C_{0}(V,Z,Q)}R\cdot\widehat{e}_{\gamma}$, where
$C_{0}(V,Z,Q):=C(V,Z,Q)\cup\{0\}$, and $C(V,Z,Q)$ is the convex cone
generated by
$S(V,Z,Q)=\{x\in\Gamma_{\mathbb{R}}\setminus\{0\}|Z(x)\in V,
Q(x)\geq0\}$.

In the case when ${\mathcal C}$ is a $3CY$ category, one can define a homomorphism from the algebra $\widehat{H}(\mathcal{C}_{V})$ to an appropriate motivic quantum torus (the word  ``motivic" here means that the coefficient ring $R$ is a certain ring of motivic functions).
This homomorphism was defined in \cite{KoSo2} via the motivic Milnor fiber of the potential
of the $3CY$ category. The notion of motivic DT-series was also introduced in the loc.cit.

It was later shown in \cite{KoSo3} that in the case of quivers with
potential one can define  motivic DT-series differently, using
equivariant critical cohomology (cf. our Section 2). In that case
instead of the motivic Hall algebra one uses COHA.

\subsection{A class of $2CY$ categories}

Let us consider a class of $2$-dimensional Calabi-Yau categories
$\mathcal{C}$  which are:
\begin{itemize}
\item[1)]Ind-constructible and locally ind-Artin in the sense of \cite{KoSo2}.
\item[2)]Endowed with a constructible homomorphism of abelian groups (class map) $cl:
K_{0}(\mathcal{C})\rightarrow\Gamma$, where
$\Gamma\simeq\mathbb{Z}^{I}$ carries a symmetric integer bilinear
form $\langle\bullet,\bullet\rangle$, and the class map $cl$
satisfies $\langle
cl(E),cl(F)\rangle=\chi(E,F):=\sum_{i\in\mathbb{Z}}(-1)^{i}\dim
Ext^{i}(E,F)$.
\item[3)]Generated by a spherical collection $\mathcal {E}=(E_{i})_{i\in
I}$ in the sense of loc. cit. such that
$cl(E_{i})\in\Gamma_{+}\simeq\mathbb{Z}_{\geq0}^{I}$. This means
that $Ext^{\bullet}(E_i,E_i)\simeq H^{\bullet}(S^2)$, and that $Ext^m(E_i,E_j)$ can be non-trivial for $m=1$ only as long as $i\ne j$.
\item[4)]For any $\gamma\in\Gamma_{+}$ the stack $\mathcal{C}_{\gamma}(\mathcal
{E})$ of objects $F$ of the heart of the $t$- structure
corresponding to $(E_{i})_{i\in I}$ such that $cl(F)=\gamma$ is a
countable disjoint union of Artin stacks of dimensions less  or
equal than $-\frac{1}{2}\langle\gamma,\gamma\rangle$.
\item[5)]For any strict sector $V\subset\mathbb{R}^{2}$ with the vertex
at $(0, 0)$, and a constructible stability central charge $Z:
\Gamma\rightarrow\mathbb{C}$ such that $\Im(Z(E_{i})):=Z(cl(E_i))\in
V, i\in I$, the stack of objects of the category $\mathcal{C}_{V}$
generated by semistable objects with central charges in $V$ is a
finite union of Artin stacks satisfying the inequality of 4) above.
\end{itemize}

With the category from our class one can associate a symmetric quiver
(vertices correspond to spherical objects $E_i$ and arrows correspond
to a basis in $Ext^1(E_i, E_j)$). Similarly to \cite{KoSo2}, Section 8 one
can prove a classification theorem for our categories in terms of Ginzburg
algebras associated with quivers. Many $2CY$ categories which appear in ``nature"  belong to
our class. For example, if $Q$ is not an ADE quiver, then the derived category
of finite-dimensional representations of $\Pi_Q$ belongs to our class.
Without any restrictions on $Q$ one can construct a $2CY$ category as the category of dg-modules over the corresponding Ginzburg algebra.

\subsection{Stability conditions and braid group action}

Assume that $\CC$ is a $2CY$ category from our class. We consider an open subset of the
space $Stab(\CC)$ of stability conditions which is defined as
$U:=\prod_{i\in I}(Im\,z_i>0)$, i.e. it is a product of upper-half
planes. A point $Z=(z_i)_{i\in I}\in U$ defines the central charge
$Z: \Gamma:=\mathbb{Z}^I\to \mathbb{C}$ which maps classes of
spherical generators to the open upper-half plane (hence the
stability condition is determined by $Z$ and the $t$-structure in
$\CC$ generated by $(E_i)_{i\in I}$).

Recall that with every $i_0\in I$ we can associate an
autoequivalence of $\CC$ (called {\it reflection functor}) by the
formula
$$R_{E_{i_0}}:F\mapsto Cone(Ext^{\bullet}(E_{i_0},F)\otimes F\to F).$$

Then $R_{E_{i_0}}(E_{i_0})=E_{i_0}[-1]$, and $R_{E_{i_0}}(E_{j}),
j\ne i_0$ is determined as the middle term in the extension
$$0\to E_j\to R_{E_{i_0}}(E_{j})\to E_{i_0}\otimes Ext^1(E_{i_0},E_j)\to 0.$$

The inverse reflection functor $R_{E_{i_0}}^{-1}$ is given by

$$R_{E_{i_0}}^{-1}(E_{i_0})=E_{i_{i_0}}[1],$$

$$0\to E_{i_0}\otimes Ext^1(E_{i_0},E_j)\to R_{E_{i_0}}^{-1}(E_{j})\to E_j\to 0.$$

Reflection functors $R_{E_i}, i\in I$ generate a subgroup
$Braid_{\CC}\subset Aut(\CC)$, which induces an action on
$Stab(\CC)$. The orbit $D:=Braid_{\CC}(U)\subset Stab(\CC)$ is the
union of consecutive ``chambers''  obtained one from another one by
reflection functor $R_{E_j}$. Such consecutive chambers have a
common real codimension one boundary singled out by the  condition
$Im\,Z(E_j)=0$.

\begin{rem}
The group $Braid_{\CC}$ plays a role of the braid group (or Weyl
group) in the theory of Kac-Moody algebras. If we add also the group
$\mathbb{Z}$ of shifts $F\mapsto F[n], n\in \mathbb{Z}$ then we
obtain an affine version of the braid group $Braid_{\CC}\times
\mathbb{Z}$. In some examples $\mathbb{Z}\subset Braid_{\CC}$.
\end{rem}

\subsection{Motivic DT-series for $2CY$ categories}

Let $\CC$  be an ind-constructible locally regular $2CY$ category over ${\bf k}$.
 Let us fix $R=Mot(Spec(\textbf{k}))[\mathbb{L}^{\frac{1}{2}},\mathbb{L}^{-1},[GL(n)]^{-1}_{n\geqslant 1}]$ as the ground ring
for the quantum torus $\mathcal {R}_{\Gamma, R}$. We will denote the
latter by $\mathcal {R}_{\Gamma}$. It is a commutative algebra
generated by elements $\widehat{e}_\gamma, \gamma\in \Gamma$ such
that
$\widehat{e}_{\gamma_1+\gamma_2}=\widehat{e}_{\gamma_1}\widehat{e}_{\gamma_2},
\widehat{e}_0=1$. Let us also fix a stability condition on $\CC$
with the central charge $Z:\Gamma\to \mathbb{C}$.

\begin{defn}
The motivic weight $\omega\in Mot(\mathcal {O}b(\mathcal{C}))$ is
defined by
$\omega(E)=\mathbb{L}^{\frac{1}{2}(\chi(E,E))}$.
\end{defn}

Then we have the following result.

\begin{prop}
The map $\Phi: H(\mathcal{C})\rightarrow\mathcal {R}_{\Gamma}$ given
by $\Phi(\nu)=(\nu,\omega)\widehat{e}_{\gamma}, \nu\in
H(\mathcal{C})_{\gamma}$ satisfies the condition
$\Phi(\nu_{1}\cdot\nu_{2})=\Phi(\nu_{1})\Phi(\nu_{2})$ for
$Arg(\gamma_{1})>Arg(\gamma_{2})$, where $\nu_{i}\in
H(\mathcal{C})_{\gamma_{i}}$. (here $(\bullet,\bullet)$ is the pairing
between motivic measures and motivic functions.)
\end{prop}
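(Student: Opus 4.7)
The plan is to reduce the identity $\Phi(\nu_{1}\cdot\nu_{2})=\Phi(\nu_{1})\Phi(\nu_{2})$ to a single motivic equality on $Y_{1}\times Y_{2}$. By bilinearity it suffices to take representatives $\nu_{i}=[Y_{i}\xrightarrow{\pi_{i}}\mathcal{O}b(\mathcal{C})]$ of pure class $\gamma_{i}$; since $\mathcal{R}_{\Gamma}$ is commutative in the $2CY$ setting with $\widehat{e}_{\gamma_{1}}\widehat{e}_{\gamma_{2}}=\widehat{e}_{\gamma_{1}+\gamma_{2}}$, the task reduces to the scalar equality $(\nu_{1}\cdot\nu_{2},\omega)=(\nu_{1},\omega)(\nu_{2},\omega)$.

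Next I would unfold the Hall product. By definition $\nu_{1}\cdot\nu_{2}=\sum_{n\in\mathbb{Z}}[W_{n}\to\mathcal{O}b(\mathcal{C})]\mathbb{L}^{-n}$, and for $(y_{1},y_{2},\alpha)\in W_{n}$ the target is $E=Cone(\alpha:\pi_{2}(y_{2})[-1]\to\pi_{1}(y_{1}))$ of class $\gamma_{1}+\gamma_{2}$ independent of $\alpha$. Since $\omega$ depends only on the K-theory class, it factors as
\[
\omega(E)=\omega(E_{1})\,\omega(E_{2})\,\mathbb{L}^{\frac{1}{2}(\langle\gamma_{1},\gamma_{2}\rangle+\langle\gamma_{2},\gamma_{1}\rangle)}.
\]
The fiber of $W_{n}\to Y_{1}\times Y_{2}$ over $(y_{1},y_{2})$ is the affine space $\mathrm{Ext}^{1}(\pi_{2}(y_{2}),\pi_{1}(y_{1}))$, nonempty exactly when $(\pi_{2}(y_{2}),\pi_{1}(y_{1}))_{\leq 0}=n$, so summing over $n$ produces on each constructible stratum the factor $\mathbb{L}^{(E_{2},E_{1})_{1}-(E_{2},E_{1})_{\leq 0}}$. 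Putting the pieces together,
\[
(\nu_{1}\cdot\nu_{2},\omega)=\int_{Y_{1}\times Y_{2}}\mathbb{L}^{(E_{2},E_{1})_{1}-(E_{2},E_{1})_{\leq 0}+\frac{1}{2}(\langle\gamma_{1},\gamma_{2}\rangle+\langle\gamma_{2},\gamma_{1}\rangle)}\,\omega(E_{1})\,\omega(E_{2}).
\]

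The key step is to use the stability hypothesis to collapse the Lefschetz exponent to zero. Because the heart of the $t$-structure determined by the stability condition has homological dimension two and satisfies $\mathrm{Ext}^{i}(E_{2},E_{1})=0$ for $i<0$, we have $(E_{2},E_{1})_{\leq 0}=(E_{2},E_{1})_{0}$ and $(E_{2},E_{1})_{i}=0$ for $i\geq 3$. The 2CY Serre duality $\mathrm{Ext}^{i}(E_{2},E_{1})\simeq\mathrm{Ext}^{2-i}(E_{1},E_{2})^{\vee}$ together with the phase-ordering hypothesis $\mathrm{Arg}(\gamma_{1})>\mathrm{Arg}(\gamma_{2})$ forces $\mathrm{Ext}^{2}(E_{2},E_{1})\simeq\mathrm{Hom}(E_{1},E_{2})^{\vee}=0$ once the $E_{i}$ are semistable in their respective classes. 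Thus $(E_{2},E_{1})_{1}-(E_{2},E_{1})_{0}=-\chi(E_{2},E_{1})=-\langle\gamma_{2},\gamma_{1}\rangle$, and the symmetry of the 2CY Euler form $\langle\gamma_{1},\gamma_{2}\rangle=\langle\gamma_{2},\gamma_{1}\rangle$ exactly cancels the contribution coming from $\omega$. The integral then reduces to $\int_{Y_{1}\times Y_{2}}\omega(E_{1})\omega(E_{2})=(\nu_{1},\omega)(\nu_{2},\omega)$ by the Fubini-type compatibility of the motivic pairing with external products.

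The main obstacle is the justification of the two Ext vanishings above. The $i<0$ vanishing demands that each $\pi_{i}(y_{i})$ actually lies in the heart of the $t$-structure and not merely in $\mathcal{C}$, while the $i=2$ vanishing requires that both $\pi_{i}(y_{i})$ be semistable of the phase prescribed by $\gamma_{i}$ so that Bridgeland's axioms yield $\mathrm{Hom}(E_{1},E_{2})=0$. This forces the proposition to be read as a statement about the restriction of $\Phi$ to the piece of $H(\mathcal{C})$ supported on semistables (cf.\ the role of $\mathcal{C}_{V}$ in Theorem 3.2), and some bookkeeping is needed to stratify $Y_{1}\times Y_{2}$ by the local dimensions of $(E_{2},E_{1})_{0}$ and $(E_{2},E_{1})_{1}$ so that the computation of $[W_{n}]$ genuinely factorizes as a motivic integral.
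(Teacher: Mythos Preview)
Your argument is correct and follows essentially the same route as the paper's proof. The paper reduces at the outset to the ``delta'' classes $\nu_{E_i}=[\mathrm{pt}\to\mathcal{O}b(\mathcal{C})]$ supported at single objects $E_i$, and then carries out exactly the exponent computation you perform: expand $\chi(E_\alpha,E_\alpha)$, integrate over $\mathrm{Ext}^1(E_2,E_1)$, and observe that the surviving power of $\mathbb{L}$ is $(E_2,E_1)_2=(E_1,E_2)_0$, which vanishes under the phase hypothesis $\mathrm{Arg}(\gamma_1)>\mathrm{Arg}(\gamma_2)$. Your version simply keeps the families $Y_1,Y_2$ throughout and phrases the same pointwise calculation as a motivic integral over $Y_1\times Y_2$; after stratifying by the constructible Ext dimensions this is equivalent to the paper's reduction to points. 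Your closing caveat---that the vanishing $\mathrm{Hom}(E_1,E_2)=0$ genuinely requires the $\pi_i(y_i)$ to be semistable of the given phase, so that the statement is really about classes supported on $\mathcal{C}_V$---is well taken and is implicit in the paper as well.
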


In other words, $\Phi$ can be written as $[\pi: Y\rightarrow\mathcal
{O}b(\mathcal{C})]\mapsto\int_{Y}\mathbb{L}^{\frac{1}{2}\chi(\pi(y),\pi(y))}\widehat{e}_{cl(\pi(y))}$.

\begin{proof}
It suffices to prove the theorem for $\nu_{E_{i}}=[\delta_{E_{i}}:
pt\rightarrow\mathcal {O}b(\mathcal{C})]$, where
$\delta_{E_{i}}(pt)=E_{i}\in\mathcal {O}b(\mathcal{C})$. Recall that
we denote ${\rm dim}\,Ext^i(E,F)$ by $(E,F)_i, i\in \mathbb{Z}$.

We have $\Phi(\nu_{E_{i}})=\mathbb{L}^{\frac{1}{2}\chi(E_{i},
E_{i})}\widehat{e}_{\gamma_{i}}$, which implies that
$\Phi(\nu_{E_{1}})\Phi(\nu_{E_{2}})=\mathbb{L}^{\frac{1}{2}(\chi(E_{1},
E_{1})+\chi(E_{2}, E_{2}))}\widehat{e}_{\gamma_{1}+\gamma_{2}}$.

On the other hand, $\nu_{E_{1}}\cdot\nu_{E_{2}}=\mathbb{L}^{-(E_{2},
E_{1})_{\leq 0}}[\pi_{21}: Ext^{1}(E_{2}, E_{1})\rightarrow\mathcal
{O}b(\mathcal{C})]$. Then
\begin{equation}
\begin{array}{ll}
&\Phi(\nu_{E_{1}}\cdot\nu_{E_{2}})=\mathbb{L}^{-(E_{2}, E_{1})_{\leq
0}}\int_{\alpha\in Ext^{1}(E_{2},
E_{1})}\mathbb{L}^{\frac{1}{2}\chi(E_{\alpha},
E_{\alpha})}\widehat{e}_{\gamma_{1}+\gamma_{2}}\\=&\mathbb{L}^{-(E_{2},
E_{1})_{\leq 0}}\mathbb{L}^{\frac{1}{2}(\chi(E_{1},
E_{1})+\chi(E_{2}, E_{2})+\chi(E_{1}, E_{2})+\chi(E_{2},
E_{1}))}\int_{\alpha\in Ext^{1}(E_{2},
E_{1})}\widehat{e}_{\gamma_{1}+\gamma_{2}}\\=&\mathbb{L}^{-(E_{2},
E_{1})_{\leq0}+\frac{1}{2}(\chi(E_{1},E_{1})+\chi(E_{2},E_{2}))+\chi(E_{2},E_{1})}\mathbb{L}^{(E_{2},E_{1})_{1}}\widehat{e}_{\gamma_{1}+\gamma_{2}}
\\=&\mathbb{L}^{\frac{1}{2}(\chi(E_{1},E_{1})+\chi(E_{2},E_{2}))+(E_{2},E_{1})_{2}}\widehat{e}_{\gamma_{1}+\gamma_{2}}
\end{array}\nonumber
\end{equation}

If $Arg(\gamma_{1})>Arg(\gamma_{2})$, then
$(E_{2},E_{1})_{2}=(E_{1},E_{2})_{0}=0$. Thus
$\Phi(\nu_{E_{1}}\cdot\nu_{E_{2}})=\Phi(\nu_{E_{1}})\Phi(\nu_{E_{2}})$.
\end{proof}

Recall the categories $\CC_V$ and set $V=l$ be a ray. For a generic
central charge $Z$ let us consider the generating function
$$A_{l}^{mot}=\sum_{[E], E\in Ob(\CC_{l})}{{\omega(E)\widehat{e}_{cl(E)}}\over{[Aut(E)]}}=$$
$$\sum_{[E], E\in Ob(\CC_{l})}\mathbb{L}^{\frac{1}{2}(\chi(E,E))}{t^{cl(E)}\over{[Aut(E)]}},$$
where $t=\widehat{e}_{\gamma_0}$ for a primitive $\gamma_0$ such that $Z(\gamma_0)\in l$ generates $Z(\Gamma)\cap l$ and $[Aut(E)]$ denotes the motive of the group of automorphisms of $E$.
More invariantly,  $A_l^{mot}=\Phi(A_l^{Hall})$ where $A_l^{Hall}\in H(\CC_l)$ corresponds to the characteristic function
of the stack of objects of the full subcategory $\CC_l\subset \CC$ generated by semistables $E$ such that $Z(E)\in l$ (cf. loc.cit.).

\begin{defn}
We call $A_l^{mot}$ the motivic DT-series of ${\mathcal C}$ corresponding to the ray $l$.
\end{defn}

Suppose that $\CC$ is associated with the preprojective algebra $\Pi_Q$.
One can show that $A_l^{mot}$ can be obtained from the motivic DT-series for the $3CY$ category associated with $(\widehat{Q},W)$ by the reduction
to $\CC$. Similarly to $A_l^{mot}$ we define $A_V^{mot}$ for any strict sector $V$.

The Proposition 3.6 implies that the series $A_V^{mot}$ is the (clockwise) product of $A_l^{mot}$ over all rays $l\subset V$.
This can be also derived  from the dimensional reduction and the results
of \cite{KoSo2}.

\begin{cor}
The collections of elements $A_{V}^{mot}=\Phi(A_{V}^{Hall})$ parametrized by strict sectors $V\subset \R^2$
with the vertex in the origin
satisfies the Factorization
Property: if a strict sector $V$ is decomposed into a disjoint union
$V=V_{1}\sqcup V_{2}$ in the clockwise order, then
$A_{V}^{mot}=A_{V_{1}}^{mot}A_{V_{2}}^{mot}$.
\end{cor}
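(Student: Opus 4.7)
The plan is to apply the algebra homomorphism $\Phi$ from Proposition 3.6 to the Factorization Property of the Hall-algebra elements $A_V^{Hall}$ (Theorem 3.5). Since by definition $A_V^{mot} = \Phi(A_V^{Hall})$, what has to be checked is that $\Phi$ respects the product $A_V^{Hall} = A_{V_1}^{Hall}\cdot A_{V_2}^{Hall}$ even though this identity lives in the completed Hall algebra $\widehat{H}(\CC_V)$ rather than in $H(\CC)$.

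First I would extend $\Phi$ to a map $\widehat{\Phi}:\widehat{H}(\CC_V)\to \mathcal{R}_{V,R}$ by setting, componentwise on each $H(\CC_V\cap cl^{-1}(\gamma))$, $\widehat{\Phi}(\nu)=(\nu,\omega)\widehat{e}_\gamma$. This is well-defined because the target is the direct product $\prod_{\gamma\in \Gamma\cap C_0(V,Z,Q)}R\cdot\widehat{e}_\gamma$, and each graded component of $A_V^{Hall}$ is a stack function of finite type within a fixed $\gamma$-stratum, so its image under $\Phi$ is a well-defined element of $R$.

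Next I would verify the multiplicativity on the two-sector factorization. Proposition 3.6 gives $\Phi(\nu_1\cdot\nu_2)=\Phi(\nu_1)\Phi(\nu_2)$ whenever $Arg(\gamma_1)>Arg(\gamma_2)$ for the grading classes of $\nu_1,\nu_2$. By the clockwise decomposition $V=V_1\sqcup V_2$, any nonzero class in $\Gamma\cap C(V_1,Z,Q)$ has strictly larger argument than any nonzero class in $\Gamma\cap C(V_2,Z,Q)$, so on the graded pieces of $\widehat{H}(\CC_{V_1})\otimes\widehat{H}(\CC_{V_2})$ the hypothesis of Proposition 3.6 holds throughout. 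Extending bilinearly and passing to the completion (which is harmless because in each fixed total degree $\gamma=\gamma_1+\gamma_2$ only finitely many decompositions contribute), one obtains
\begin{equation}
\widehat{\Phi}(\nu_1\cdot\nu_2)=\widehat{\Phi}(\nu_1)\,\widehat{\Phi}(\nu_2)\qquad \text{for }\nu_i\in \widehat{H}(\CC_{V_i}).\nonumber
\end{equation}

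Finally I apply this to Theorem 3.5 and obtain
\begin{equation}
A_V^{mot}=\widehat{\Phi}(A_V^{Hall})=\widehat{\Phi}(A_{V_1}^{Hall}\cdot A_{V_2}^{Hall})=\widehat{\Phi}(A_{V_1}^{Hall})\,\widehat{\Phi}(A_{V_2}^{Hall})=A_{V_1}^{mot}\,A_{V_2}^{mot},\nonumber
\end{equation}
which is the claim. The main obstacle is not the algebra identity itself, which is a direct consequence of Theorem 3.5 and Proposition 3.6, but the care needed to justify that $\Phi$ extends to the completed Hall algebra and that the "$Arg(\gamma_1)>Arg(\gamma_2)$" hypothesis of Proposition 3.6 applies uniformly to all graded components coming from $V_1$ and $V_2$; once the strictness of $V$ and the clockwise ordering are tracked carefully, the extension is automatic.
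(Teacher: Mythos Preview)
Your proposal is correct and follows essentially the same approach as the paper: the corollary is stated there without an explicit proof, being presented as an immediate consequence of Proposition~3.6 (applied to the factorization $A_V^{Hall}=A_{V_1}^{Hall}\cdot A_{V_2}^{Hall}$ from Theorem~3.3). Your write-up simply makes explicit the two points the paper leaves tacit---the extension of $\Phi$ to the completion and the verification that the clockwise ordering guarantees the $Arg(\gamma_1)>Arg(\gamma_2)$ hypothesis---so there is nothing to add.
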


\begin{prop}
Motivic DT-series $A_V^{mot}$ is constant on each connected
component of the space of stability conditions.
\end{prop}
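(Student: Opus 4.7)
I would show $A_V^{mot}$ is locally constant along any continuous path $\{Z_t\}$ inside a connected component of $Stab(\CC)$ and conclude by path-connectedness. Away from walls, the ind-constructible subcategory $\CC_V$ deforms trivially, so the Hall-algebra element $A_V^{Hall}$ is locally constant and hence $A_V^{mot}=\Phi(A_V^{Hall})$ is locally constant. The content is therefore concentrated at the walls where semistability changes. It is natural to organize these into \emph{internal walls} (two distinct rays $l_1,l_2\subset V$ align and cross) and \emph{boundary walls} (a ray crosses $\partial V$); by choosing the path $\{Z_t\}$ generically one can assume only one wall event happens at each critical time.

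\textbf{Wall-crossing via commutativity.} The key input specific to the $2CY$ situation is that the Euler pairing $\langle\gamma_1,\gamma_2\rangle=\chi(E_1,E_2)$ is symmetric, so the quantum torus $\mathcal R_\Gamma$ is commutative: $\widehat e_{\gamma_1}\widehat e_{\gamma_2}=\widehat e_{\gamma_2}\widehat e_{\gamma_1}$. At an internal wall I would apply the Factorization Property (Corollary 3.10) to decompose $V$ into strict sub-sectors refining both the before- and after-wall ray configurations, writing $A_V^{mot}$ as the clockwise product of the ray contributions $A_l^{mot}$. Crossing the wall transposes the two adjacent factors $A_{l_1}^{mot}$ and $A_{l_2}^{mot}$, which is invisible in the commutative ring $\mathcal R_\Gamma$. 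Equivalently, at the Hall-algebra level one can observe directly that the set of isomorphism classes of objects of $\CC_V$ is unchanged across an internal wall, since the Harder--Narasimhan pieces of a given object merely reshuffle but remain inside $V$, so the stack function $A_V^{Hall}$ is literally the same before and after. For a boundary wall one enlarges $V$ to a slightly wider strict sector $\widetilde V=V_-\sqcup V\sqcup V_+$ whose boundary rays avoid every $Z_t(\gamma)$ in a neighborhood of the wall time; then $A_{\widetilde V}^{mot}$ is constant by the previous argument and, by Corollary 3.10, factors as $A_{V_-}^{mot}\cdot A_V^{mot}\cdot A_{V_+}^{mot}$.

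\textbf{Main obstacle.} The subtle step is passing from the invariance of a product to the invariance of each factor: commutativity of $\mathcal R_\Gamma$ alone only preserves the product $\prod_{l\subset\widetilde V} A_l^{mot}$, not a priori its individual constituents. The resolution is a uniqueness-of-factorization argument based on the $\Gamma$-grading: each $A_l^{mot}$ is supported on the sub-semigroup $\Z_{\geq 0}\gamma_l\subset\Gamma$ for the primitive class $\gamma_l$ along $l$, and distinct rays contribute in distinct directions of $\Gamma_\R$ (generically), so the factorization of $A_{\widetilde V}^{mot}$ into ray contributions is unique. Combined with the direct Hall-algebra observation that $\mathrm{Ob}(\CC_V)$ does not change under an internal wall-crossing, this extracts per-factor invariance and gives constancy of $A_V^{mot}$ on the connected component. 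The technical care lies precisely at the moments when several rays coalesce: one must group the colliding contributions into a single factor supported on the common direction, and verify that the generic splitting on either side of the wall recovers the same individual factors.
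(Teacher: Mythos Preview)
Your central mechanism—commutativity of the motivic quantum torus (forced by the symmetric Euler form in the $2CY$ setting) trivializes wall-crossing—is precisely the paper's argument, which is two sentences: $A_V^{mot}$ is unchanged when no semistable central charge crosses $\partial V$ (as for $3CY$ categories), and the wall-crossing formulas of \cite{KoSo2} reduce to the identity in a commutative torus. Your internal-wall discussion is correct and essentially identical to this.

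Where you over-elaborate is the separate ``boundary wall'' case, and there your uniqueness-of-factorization argument has a real gap. Unique factorization of $A_{\widetilde V}^{mot}$ into ray contributions does establish that each per-ray factor (equivalently each $\Omega(\gamma)$) is invariant along the path—and this is the actual content the paper uses immediately after the proposition. But it does \emph{not} establish that $A_V^{mot}=\prod_{l\subset V}A_l^{mot}$ is invariant, because the index set $\{l:l\subset V\}$ itself changes when a ray crosses $\partial V$: the factor $A_l^{mot}$ simply migrates from $A_V^{mot}$ to $A_{V_\pm}^{mot}$ while the total product over $\widetilde V$ stays fixed. Your final sentence (``extracts per-factor invariance and gives constancy of $A_V^{mot}$'') conflates these two conclusions. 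The proposition should be read—as the paper tacitly does—either as a statement about the $\Omega(\gamma)$, or about $A_V^{mot}$ for $V$ chosen so that no central charge meets $\partial V$ along the deformation; in that regime only internal walls occur and your argument already suffices without the boundary-wall detour.
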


\begin{proof}
Similarly to the case of $3CY$ categories, each element $A_V^{mot}$ does not change when
we move in the space of stability conditions on $\CC$ in such a way that central charges of semistable object neither enter nor leave the sector $V$. But in the case of $2CY$ categories the
Euler form is symmetric, hence the motivic quantum torus is commutative. It follows that the wall-crossing formulas from \cite{KoSo2} are trivial. This implies
the result.
\end{proof}

For a $2CY$ category form our class one can construct the corresponding $3CY$ category (see Introduction). We expect that the motivic DT-series arising in this situation are quantum
admissible in the sense of \cite{KoSo3} and can be described in terms of the corresponding COHA (the latter is expected to exist for quite general $3CY$ categories,
see \cite{So}).

Therefore, by analogy with the case of $3CY$ categories,
we can define DT-invariants $\Omega(\gamma)$ in $2CY$ case using (quantum) admissibility (see \cite{KoSo3}, Section 6)  of our DT-series
by the formula:

$$A_V^{mot}=Sym\left(\sum_{n\ge 0}{\mathbb L}^n\sum_{\gamma\ne 0, Z(\gamma)\in V}\Omega(\gamma)\widehat{e}_{\gamma}\right)=$$
$$=Sym\left({\sum_{\gamma\ne 0, Z(\gamma)\in V}\Omega(\gamma)\widehat{e}_{\gamma}\over {1-\mathbb{L}}}\right).$$

By Proposition 3.9 our motivic DT-invariants $\Omega(\gamma)$ depend only on the connected component of $Stab(\CC)$ which contains $Z$. The Conjecture 3.10 (see next subsection) says that $\Omega(\gamma)$ is (essentially) the same as Kac polynomial $a_{\gamma}({\mathbb L})$ (or the motivic DT-invariant of the
corresponding $3CY$ category, see Introduction).

Let us fix the connected component in $Stab(\CC)$ which contains such central charge $Z$  that for each spherical generator $E_i$
of $\CC$ we have $Z(E_i)=(0,...,1,...0)$ (the only nontrivial element $1$ at the $i$-th place).
We will call the corresponding $t$-structure {\it standard}.
We denote the corresponding motivic DT-invariants
by $\Omega_{\CC}^{mot}(\gamma)$.

\subsection{Kac polynomial of a $2CY$ category}

We can now introduce an analog of the Kac polynomial in the case of a $2CY$ category from our class
following the ideas of \cite{Moz1}.

Notice that the coefficient ring
$Mot(Spec(\textbf{k}))[\mathbb{L}^{\frac{1}{2}},\mathbb{L}^{-1},[GL(n)]^{-1}_{n\geqslant
1}]$ of the quantum torus $\mathcal {R}_{\Gamma}$ has a
$\lambda-$ring structure, which can be lifted to the quantum torus
(which is commutative in the case of $2CY$ categories). Recall that
for a $\lambda$-ring we can introduce the operation of
symmetrization by the formula:
$$Sym(r)=\sum_{n\geqslant0}Sym^{n}(r)=\sum_{n\geqslant0}(-1)^{n}\lambda^{n}(-r)=(\sum_{n\geqslant0}(-1)^{n}\lambda^{n}(r))^{-1}.$$
For any ray $l\subset\mathbb{H}_{+}$, where $\mathbb{H}_{+}$ is the
upper half plane, we have the (quantum) admissible element
$A_{l}^{mot}$.

Let ${\mathcal C}$ be a $2CY$ category from our class. We fix the standard  $t$-structure. Recall
the motivic DT-series $A_l^{mot}$.

\begin{conj}

There exist elements $a^{mot}_\gamma(\mathbb L)\in
Mot(Spec(\textbf{k}))[\mathbb{L}^{\frac{1}{2}},\mathbb{L}^{-1},[GL(n)]^{-1}_{n\geqslant
1}]$ which are polynomials in ${\mathbb L}$ and such that the
following formula holds in the (commutative) motivic quantum torus:
$$A_{l}^{mot}=Sym\left(\frac{\sum_{\gamma, Z(\gamma)\in
l}({-} a^{mot}_\gamma(\mathbb L)\cdot{\mathbb L})\widehat{e}_{\gamma}}{1-\mathbb{L}}\right).$$

Furthermore, there exists a $3CY$ category ${\mathcal B}$ such that
the elements $a^{mot}(\mathbb L)$ coincide with motivic
$DT$-invariants with respect to some stability condition on
${\mathcal B}$.

\end{conj}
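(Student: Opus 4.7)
The plan is to reduce the conjecture to the (expected) quantum admissibility theorem in the $3CY$ setting, by constructing a canonical $3CY$ ``upgrade'' $\mathcal{B}$ of $\mathcal{C}$ and identifying the two generating series via dimensional reduction. Concretely, I would first build $\mathcal{B}$ as the category whose heart consists of pairs $(E,f)$ with $E$ in the heart of the standard $t$-structure of $\mathcal{C}$ and $f\in\mathrm{Hom}(E,E)$, modeled on the passage from $\Pi_Q$ to $(\widehat Q,W)$ with $W=\sum[a,a^{*}]l$. Since $\mathcal{C}$ is locally ind-Artin and generated by a spherical collection, the class of $\mathcal{B}$ is again parametrized by the same $\Gamma\simeq \mathbb{Z}^{I}$, but its Euler form vanishes identically (because the loop at each $E_i$ contributes a Tate twist that symmetrizes the form from $\mathcal{C}$ to zero).

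Next, I would transport the stability data from $\mathcal{C}$ to $\mathcal{B}$ (the loop $f$ is nilpotent on semistable objects, so central charges extend trivially) and compare motivic Hall-to-torus homomorphisms. The key technical step is a dimensional reduction statement in the spirit of \cite[Section~4.8]{KoSo3}: for each $\gamma$ the critical motive of $(\mathbf{M}_{\mathcal{B},\gamma},\mathrm{Tr}(W)_\gamma)$ is, after the $\mathbb{C}^{*}$-equivariant reduction, isomorphic to the motive of the stack of objects of $\mathcal{C}$ of class $\gamma$ times $\mathbb{L}^{n(\gamma)}$ for an explicit exponent coming from the difference between $\gamma\cdot\gamma$ and $\chi_\mathcal{C}(\gamma,\gamma)$. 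Summing over $\gamma$ with $Z(\gamma)\in l$ and using Proposition 3.6, this identifies $A_l^{mot}(\mathcal{C})$ with $A_l^{mot}(\mathcal{B})$ up to the factor $\mathbb{L}/(1-\mathbb{L})$ (a single loop contributes $(1-\mathbb{L})^{-1}$ when resummed, and the shift by $\mathbb{L}$ in the conjecture encodes the Tate twist of the loop direction).

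Then I would invoke the expected quantum admissibility of $A_l^{mot}(\mathcal{B})$: since $\mathcal{B}$ is $3CY$, the motivic wall-crossing formalism of \cite{KoSo2,KoSo3} produces invariants $\Omega_\mathcal{B}(\gamma)\in\mathrm{Mot}(\mathrm{Spec}\,\mathbf{k})[\mathbb{L}^{\frac{1}{2}},\mathbb{L}^{-1}]$ with
\[
A_l^{mot}(\mathcal{B})=\mathrm{Sym}\!\left(\frac{\sum_{\gamma,\,Z(\gamma)\in l}\Omega_\mathcal{B}(\gamma)\,\widehat e_\gamma}{1-\mathbb{L}}\right).
\]
Setting $a^{mot}_\gamma(\mathbb L):=-\mathbb{L}^{-1}\Omega_\mathcal{B}(\gamma)$ and combining with the identification above yields the desired symmetrization formula. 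Polynomiality in $\mathbb{L}$ would then follow by comparison with the known case of $\Pi_Q$: the COHA approach of Section 2, together with Mozgovoy's formula in \cite{Moz1} expressing Kac polynomials in terms of preprojective moduli and $3CY$ DT-invariants, forces $a^{mot}_\gamma$ to coincide with the Kac polynomial $a_\gamma(\mathbb{L})$ for quiver-type examples; polynomiality in the general case should propagate from this via the classification of $2CY$ categories from our class by symmetric quivers.

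The principal obstacle is the quantum admissibility step. In the $3CY$ quiver-with-potential setting admissibility is a theorem of \cite{KoSo3}, but here $\mathcal{B}$ is constructed abstractly from a $2CY$ category, and one needs an analogous COHA and its factorization property in this generality --- precisely the unresolved issue flagged at the end of Section 3.4. A secondary, but genuine, obstacle is establishing that the dimensional reduction isomorphism intertwines the Hall-algebra product on $\mathcal{B}$ (built from $\mathrm{Ext}^1$ in the $3CY$ sense, involving $\mathrm{Ext}^2=\mathrm{Ext}^0$ contributions via Serre duality) with the Hall product on $\mathcal{C}$ up to the controlled Tate twists; any discrepancy here would shift the polynomial $a_\gamma^{mot}$ by a power of $\mathbb L$ and obscure the connection with Kac polynomials. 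Finally, verifying polynomiality (rather than mere membership in the localized ring) outside the quiver case requires a purity statement for the cohomology of the stack of objects of the heart, which I would approach through the Lagrangian substack mentioned in the introduction and the conjectural purity of COHA.
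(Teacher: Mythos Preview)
The statement you are attempting to prove is labeled \emph{Conjecture} 3.10 in the paper, not a theorem, and the paper does not supply a proof. Immediately after stating it the authors write that Theorem~5.1 of \cite{Moz1} establishes the conjecture in the quiver case (where $\mathcal{C}$ comes from a preprojective algebra and $a^{mot}_\gamma$ is the Kac polynomial), and that they ``plan to discuss the general case in the future work.'' So there is no paper proof to compare your proposal against.

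That said, your outline is faithful to the heuristic the paper itself advertises: upgrade $\mathcal{C}$ to a $3CY$ category $\mathcal{B}$ via the $(E,f)$ construction (cf.\ the Introduction and the passage from $\Pi_Q$ to $(\widehat{Q},W)$), invoke dimensional reduction to match DT-series, and then appeal to quantum admissibility on the $3CY$ side. You are also candid about the genuine gaps, and they are exactly the ones the paper leaves open: (i) the existence of COHA and the admissibility/integrality theorem for $\mathcal{B}$ beyond the quiver-with-potential case is only \emph{expected} (see the sentence just before the displayed $\mathrm{Sym}$ formula in Section~3.4 and the reference to \cite{So}); (ii) compatibility of dimensional reduction with the Hall product in this generality is not established in the paper; (iii) polynomiality in $\mathbb{L}$ outside the quiver case would require a purity input that is not available. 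None of these steps can currently be completed with the tools in the paper, which is precisely why the statement is posed as a conjecture rather than a theorem.
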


 Some related results can be found in \cite{CBVdB}, \cite{D2} \cite{HLRV}, and especially in \cite{Moz1}. In fact Theorem 5.1 from \cite{Moz1}
establishes the Conjecture in the framework of quivers. More precisely,
if $\mathcal{C}$ is the $2CY$ category associated with the
preprojective algebra of a  quiver, then for its standard $t$-structure the element
$a^{mot}_\gamma(\mathbb L)$ coincides with the Kac polynomial
$a_{\gamma}(\mathbb{L})$ of the Kac-Moody algebra corresponding to the quiver.

We plan to discuss  the general case in the future work.

\section{Appendix by Ben Davison: Comparison of Schiffmann-Vasserot product with Kontsevich-Soibelman product}

Let $Q$ be a quiver with the set of vertices $Q_0:=I$, and the set
of arrows $Q_1=\Omega$. We can construct the double quiver $\overline{Q}$
and the triple quiver with potential $(\widehat{Q}, W)$ as in
Section 2.2. Recall the representation spaces
$\textbf{M}_{\overline{Q},\gamma}$ and
$\textbf{M}_{\widehat{Q},\gamma}$. Since
$\textbf{M}_{\overline{Q},\gamma}\cong T^\ast(\textbf{M}_{Q,\gamma})$,
there is a natural action of the group
$\textbf{G}_{\gamma}=\prod_{i\in I}GL(\gamma^i,\mathbb{C})$ on the
cotangent space. Let $L$ be the subquiver of $\widehat{Q}$
containing only the loops $l_i$, and denote its representation space
$\textbf{M}_{L,\gamma}$.

Let $\mu_{\gamma}$ denote the moment map
$\textbf{M}_{\overline{Q},\gamma}\rightarrow \mathfrak{gl}_{\gamma}$
or the map
$\textbf{M}_{\widehat{Q},\gamma}\rightarrow\mathfrak{gl}_{\gamma}$
both given by evaluating at $\sum_{a\in Q_1} [a,a^*]$.

In \cite{SV} the following special case was considered. Let
$Q$ be the Jordan quiver (i.e. the quiver with one vertex and one
loop). We denote by $C_n$ the commuting variety, i.e. the variety of
pairs of commuting $n\times n$ matrices.  Then $C_n=\mu_n^{-1}(0)$,
where $\gamma=n$ is the dimension vector.

\subsection{Schiffmann Vasserot product}
Notice that the affine algebraic variety
$\textbf{M}_{\overline{Q},\gamma}$ carries the action of the 2-torus
$T:=(\Cp^*)^2$ given by
\begin{align*}
(z_1,z_2)\cdot\rho(a)=&z_1\rho(a)\\
(z_1,z_2)\cdot\rho(a^*)=&z_2\rho(a^*),
\end{align*}
where $\rho: \mathbb{C}Q\rightarrow Gl(V_{\gamma})$ is a
representation of $\mathbb{C}Q$. Define
$\tilde{\textbf{G}}_{\gamma}:=\textbf{G}_{\gamma}\times T$. For
$\gamma_1,\gamma_2\in\mathbb{N}^{I}$ with $\gamma_1+\gamma_2=\gamma$
we define
$\tilde{\textbf{G}}_{\gamma_1,\gamma_2}=\textbf{G}_{\gamma_1,\gamma_2}\times
T$, and
$\tilde{\textbf{G}}_{\gamma_1\times\gamma_2}=\textbf{G}_{\gamma_1}\times\textbf{G}_{\gamma_2}\times
T$. Following Schiffmann and Vasserot \cite{SV} let us consider the
space
\[
\Ho_{c,\tilde{\textbf{G}}_{\gamma}}(\mu_{\gamma}^{-1}(0))^\vee,
\]
the vector dual to the compactly supported equivariant cohomology
(equivalently, the equivariant Borel-Moore homology).

For $\gamma=\gamma_1+\gamma_2$ denote by
$\gl_{\gamma_1,\gamma_2}\subset \gl_{\gamma}$ the subspace of
parabolic matrices.  Denote by $Z_{\gamma_1,\gamma_2}$ the space of
triples $(a,b,c)\in
\textbf{M}_{\overline{Q},\gamma_1}\times\textbf{M}_{\overline{Q},\gamma_2}\times\gl_{\gamma_1,\gamma_2}$
such that
$c|_{\gl_{\gamma_1}\times\gl_{\gamma_2}}=\mu_{\gamma_1}(a)\times\mu_{\gamma_2}(b)$.

Then we have the following diagram

\[
\xymatrix{
\textbf{M}_{\overline{Q},\gamma}&\textbf{M}_{\overline{Q},\gamma_1,\gamma_2}\ar[l]_h\ar[r]^{f'}&Z_{\gamma_1,\gamma_2}
\\
\mu_{\gamma}^{-1}(0)\ar[u]&\mu_{\gamma_1,\gamma_2}^{-1}(0)\ar[l]^{h|_{\mu^{-1}_{\gamma_1,\gamma_2}(0)}}\ar[u]\ar[r]&\mu_{\gamma_1}^{-1}(0)\times
\mu_{\gamma_2}^{-1}(0)\ar[u]^{g} }
\]
where $f'$ is the map
\[
(\alpha)\mapsto
(\alpha|_{\textbf{M}_{\overline{Q},\gamma_1}},\alpha|_{\textbf{M}_{\overline{Q},\gamma_2}},\mu_{\gamma}(\alpha))
\]
and $g$ is the map
\[
(\alpha,\beta)\mapsto(\alpha,\beta,0)
\]
and we have set
$\mu_{\gamma_1,\gamma_2}^{-1}(0):=\mu_{\gamma}^{-1}(0)\cap\textbf{M}_{\overline{Q},\gamma_1,\gamma_2}$.

\smallbreak The rightmost square is Cartesian, meaning that when we
restrict the natural morphism of sheaves
\[
\left(f'_!\QQ\rightarrow \QQ[\sum_{i\in
I}\gamma_1^i\gamma_2^i-\sum_{a\in \overline{Q}_1}
\gamma_1^{s(a)}\gamma_2^{t(a)}]\right)|_{\mu_{\gamma_1}^{-1}(0)\times
\mu_{\gamma_2}^{-1}(0)}
\]
we obtain a map
\[
f'_*:\Ho_{c,\tilde{\textbf{G}}_{\gamma_1,\gamma_2}}(\mu_{\gamma_1,\gamma_2}^{-1}(0))\rightarrow
\Ho_{c,\tilde{\textbf{G}}_{\gamma_1,\gamma_2}}(\mu_{\gamma_1}^{-1}(0)\times
\mu_{\gamma_2}^{-1}(0)).
\]
We have also the map of sheaves on
$\textbf{M}_{\overline{Q},\gamma}$
\[
\QQ_{\mu_{\gamma}^{-1}(0)}\rightarrow
h_*\QQ_{\mu_{\gamma_1,\gamma_2}^{-1}(0)}
\]
and since $h$ is proper we may replace $h_*$ with $h_!$ and after
taking compactly supported cohomology we obtain a map
\[h^*:\Ho_{c,\tilde{\textbf{G}}_{\gamma_1,\gamma_2}}(\mu_{\gamma}^{-1}(0))\rightarrow \Ho_{c,\tilde{\textbf{G}}_{\gamma_1,\gamma_2}}(\mu_{\gamma_1,\gamma_2}^{-1}(0)).
\]
Via the inclusion $\textbf{G}_{\gamma_1,\gamma_2}\hookrightarrow
\textbf{G}_{\gamma}$ we obtain a map
\begin{equation}
\label{coeffs}
r:\Ho_{c,\tilde{\textbf{G}}_{\gamma}}(\mu_{\gamma}^{-1}(0))\rightarrow
\Ho_{c,\tilde{\textbf{G}}_{\gamma_1,\gamma_2}}(\mu_{\gamma}^{-1}(0))
\end{equation}
as in \cite{KoSo3}. Finally the Schiffmann Vasserot multiplication
is given by setting
\[
m_{SV}=(f'_*h^*r)^{\vee}
\]
where we identify
\[
\Ho_{c,\tilde{\textbf{G}}_{\gamma_1,\gamma_2}}(\mu_{\gamma_1}^{-1}(0)\times
\mu_{\gamma_2}^{-1}(0))^\vee\cong\Ho_{c,\tilde{\textbf{G}}_{\gamma_1}}(\mu_{\gamma_1}^{-1}(0))^\vee\otimes_{\Ho_{T}(\pt)}\Ho_{c,\tilde{\textbf{G}}_{\gamma_2}}(\mu_{\gamma_2}^{-1}(0))^\vee
\]
via the affine fibration
$\textbf{G}_{\gamma_1,\gamma_2}\rightarrow\textbf{G}_{\gamma_1}\times\textbf{G}_{\gamma_2}$
and the $T$-equivariant Kunneth isomorphism.
\bibliographystyle{amsplain}
\subsection{Critical COHA product}
Consider the quiver with potential $(\widehat{Q},
W=\sum_{a\in\Omega}[a, a^{*}]l)$ constructed in Section 2.2. The
torus $T$ acts on $\textbf{M}_{\widehat{Q},\gamma}$ via
\begin{align*}
(z_1,z_2)\cdot\rho(a)=&z_1\rho(a)\\
(z_1,z_2)\cdot\rho(a^*)=&z_2\rho(a^*)\\
(z_1,z_2)\cdot\rho(l_i)=&z_1^{-1}z_2^{-1}\rho(l_i).
\end{align*}
Note that $Tr(W)_{\gamma}$ is invariant with respect to the
$T$-action, and the natural projection
$\textbf{M}_{\widehat{Q},\gamma}\rightarrow\textbf{M}_{\overline{Q},\gamma}$
is $\tilde{\textbf{G}}_{\gamma}$-equivariant. \smallbreak We
consider the space
\[
\bigoplus_{\gamma}\Ho^{crit}_{c,\tilde{\textbf{G}}_{\gamma}}(\textbf{M}_{\widehat{Q},\gamma},W_\gamma)^\vee.
\]
\smallbreak Consider the correspondence diagram
\[
\xymatrix{
\textbf{M}_{\widehat{Q},\gamma}&\ar[l]^{\tau}\textbf{M}_{\widehat{Q},\gamma_1,\gamma_2}\ar[r]_-{\zeta}&\textbf{M}_{\widehat{Q},\gamma_1}\times\textbf{M}_{\widehat{Q},\gamma_2}.
}
\]
Applying $\varphi_{Tr(W)_{\gamma}}$ to the map of sheaves
$\mathbb{Q}\rightarrow\tau_\ast\mathbb{Q}$ we obtain a map
$\varphi_{Tr(W)_{\gamma}}\QQ\rightarrow
\tau_*\varphi_{Tr(W)_{\gamma_1,\gamma_2}}\QQ$ by properness of
$\tau$, and a map
\[
\tau^*:\Ho^{crit}_{c,\tilde{\textbf{G}}_{\gamma_1,\gamma_2}}(\textbf{M}_{\widehat{Q},\gamma},W_{\gamma})\rightarrow
\Ho^{crit}_{c,\tilde{\textbf{G}}_{\gamma_1,\gamma_2}}(\textbf{M}_{\widehat{Q},\gamma_1,\gamma_2},W_{\gamma_1,\gamma_2}).
\]
Since $\zeta$ is an affine fibration, and
$Tr(W)_{\gamma_1,\gamma_2}=(Tr(W)_{\gamma_1}\boxplus
Tr(W)_{\gamma_2})\circ\zeta$, there is a natural shifted isomorphism
\[
\Ho^{crit}_{c,\tilde{\textbf{G}}_{\gamma_1,\gamma_2}}(\textbf{M}_{\widehat{Q},\gamma_1,\gamma_2},W_{\gamma_1,\gamma_2})\rightarrow
\Ho^{crit}_{c,\tilde{\textbf{G}}_{\gamma_1,\gamma_2}}(\textbf{M}_{\widehat{Q},\gamma_1}\times\textbf{M}_{\widehat{Q},\gamma_2},W_{\gamma_1}\boxplus
W_{\gamma_2}).
\]
Changing the group as in (\ref{coeffs}), and composing the duals of
the above two maps, we obtain the $H_T(\pt)$-linear
Kontsevich-Soibelman product $m_{KS}$ (here we use the
Thom-Sebastiani isomorphism rather than Kunneth -- via dimensional
reduction this becomes a special case of the equivariant Kunneth
isomorphism):
\begin{align*}
\Ho^{crit}_{c,\tilde{\textbf{G}}_{\gamma_1}}(\textbf{M}_{\widehat{Q},\gamma_1},W_{\gamma_1})^\vee\otimes_{\Ho_{T}(\pt)}
\Ho^{crit}_{c,\tilde{\textbf{G}}_{\gamma_2}}(\textbf{M}_{\widehat{Q},\gamma_2},W_{\gamma_2})^\vee\xrightarrow{m_{KS,\gamma_1,\gamma_2}}
\Ho^{crit}_{c,\tilde{\textbf{G}}_{\gamma}}(\textbf{M}_{\widehat{Q},\gamma},W_{\gamma})^\vee.
\end{align*}
\subsection{Dimensional reduction}
We give $\textbf{M}_{\widehat{Q},\gamma}$ a further $\Cp^*$-action
as follows
\begin{align*}
z\cdot \rho(a)=&\rho(a)\\
z\cdot \rho(a^*)=&\rho(a^*)\\
z\cdot \rho(l_i)=&z\rho(l_i).
\end{align*}
Then $Tr(W)_{\gamma}$ is an equivariant function with respect to
this action, where $\Cp^*$ acts on the target $\mathbb{C}$ via the
weight one (scaling) action.  There is a projection
$\pi:\textbf{M}_{\widehat{Q},\gamma}\rightarrow
\textbf{M}_{\overline{Q},\gamma}$ given by forgetting the action on
the loops $l_i$.  We write
$\textbf{M}_{\widehat{Q},\gamma}=\textbf{M}_{\overline{Q},\gamma}\times\textbf{M}_{L,\gamma}$.
In this way we consider
$\mu_{\gamma}^{-1}(0)\times\textbf{M}_{L,\gamma}$ as a subspace
of $\textbf{M}_{\widehat{Q},\gamma}$.  Note that
$\mu_{\gamma}^{-1}(0)\times\textbf{M}_{L,\gamma}\subset
Tr(W)_{\gamma}^{-1}(0)$. \smallbreak There is a canonical map
$\varphi_{Tr(W)_{\gamma}}\mathbb{Q}_{\textbf{M}_{\widehat{Q},\gamma}}[-1]\rightarrow
\QQ_{Tr(W)_{\gamma}^{-1}(0)}$.  The dimensional reduction theorem of
the appendix of \cite{D1} (see also \cite{KoSo3} in the case of
rapid decay cohomology) states that the induced map
\[
\pi_!((\varphi_{Tr(W)_{\gamma}}\mathbb{Q}_{\textbf{M}_{\widehat{Q},\gamma}}[-1]\rightarrow
\QQ_{Tr(W)_{\gamma}^{-1}(0)})_{\mu_{\gamma}^{-1}(0)\times\textbf{M}_{L,\gamma}})
\]
is an isomorphism, and so taking compactly supported cohomology we
obtain a (shifted) isomorphism
\begin{equation}
\label{dimred}
\Ho^{crit}_{c,\tilde{\textbf{G}}_{\gamma}}(\textbf{M}_{\widehat{Q},\gamma},W_{\gamma})\rightarrow
\Ho_{c,\tilde{\textbf{G}}_{\gamma}}(\mu_{\gamma}^{-1}(0)).
\end{equation}
This establishes an isomorphism between the underlying vector spaces
of the two COHAs (in fact it is an isomorphism of mixed Hodge
structures, up to the already-mentioned cohomological shift and a
Tate twist).  The purpose of this appendix is to compare the two algebra
structures $m_{SV}$ and $m_{KS}$.
\subsection{Proof of (almost) preservation of product}
The following lemma accounts for the sign that appears in the
comparison of $m_{SV}$ and $m_{KS}$.
\begin{lem}
\label{signLemma} Let
$X=\Spec(\mathbb{C}[x_1,\ldots,x_m])\times\Spec(\mathbb{C}[y_1,\ldots,y_m])\times\Spec(\mathbb{C}[z_1,\ldots,z_n])$
and let
\[
f=\sum_{i\leq m}x_iy_i.
\]
In the following diagram
\[
\xymatrix{
\Ho_c(\pt,\mathbb{Q})&\ar[l]_-a\Ho_{c}(Z(x_1,\ldots,x_m),\mathbb{Q})\\
\Ho_c(Z(y_1,\ldots,y_m),\mathbb{Q})\ar[u]^-{b}&\Ho^{crit}_c(X,f)\ar[u]^c\ar[l]_-d
}
\]
the maps $a$ and $b$ are the canonical shifted isomorphisms associated to
affine fibrations, while the maps $c$ and $d$ are dimensional
reduction isomorphisms.  Then we have the equality
\[
ac=bd\cdot (-1)^m.
\]
\end{lem}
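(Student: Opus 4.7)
The strategy is to reduce by K\"unneth and Thom--Sebastiani to the case $m = 1$, $n = 0$, and then to determine the resulting sign through an explicit symmetry and a Milnor fiber calculation.

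First I would strip off the $z$-variables: the function $f = \sum x_i y_i$ does not involve any $z_j$, and both $Z(x_1,\ldots,x_m)$ and $Z(y_1,\ldots,y_m)$ contain $\Spec \mathbb{C}[z_1,\ldots,z_n]$ as a direct factor. K\"unneth turns each vertex of the square into the $n = 0$ version tensored with $H_c(\mathbb{A}^n_z)$, and all four maps respect this decomposition; so it suffices to treat $n = 0$. Next, using Thom--Sebastiani (Theorem 2.3) I decompose $f = \boxplus_{i=1}^m x_iy_i$ as an external sum, so that $\varphi_f\mathbb{Q}$ is the exterior tensor product of the sheaves $\varphi_{x_iy_i}\mathbb{Q}$. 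The dimensional reduction isomorphisms $c$, $d$ and the affine fibration isomorphisms $a$, $b$ are all compatible with this factorization (the former since the reduction is defined pointwise on the base, the latter by plain K\"unneth). Hence $ac$ and $bd$ factor as $m$-fold tensor products of their $m = 1$ analogues, and the total sign is the $m$-th power of the sign at $m = 1$. So the lemma is reduced to showing $ac = -bd$ for $f = xy$ on $\mathbb{A}^2$.

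To compute the $m = 1$ sign I would exploit the involution $\sigma: \mathbb{A}^2 \to \mathbb{A}^2$, $(x, y) \mapsto (y, x)$, which satisfies $\sigma^* f = f$ and swaps $\{x = 0\}$ with $\{y = 0\}$. Naturality of the vanishing cycles functor and of dimensional reduction gives $d = \sigma^* \circ c \circ \sigma^*$, while $a = b \circ \sigma^*$ because any affine isomorphism of $\mathbb{A}^1$ acts trivially on $H^2_c(\mathbb{A}^1)$ as a complex analytic space. The space $H_c^{crit}(\mathbb{A}^2, xy)$ is one-dimensional, so $\sigma^*$ acts on it by a scalar $\epsilon \in \{\pm 1\}$; combining the two identities yields $bd = \epsilon \cdot ac$, and the lemma therefore amounts to showing $\epsilon = -1$.

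This last step is a direct computation on the Milnor fiber. The Milnor fiber $MF = \{xy = 1\} \cong \mathbb{C}^*$ is parameterized by $x$ (with $y = 1/x$), and the vanishing cohomology $\tilde{H}^1(MF)$ is generated by the class of $dx/x$. The restriction of $\sigma$ to $MF$ is $x \mapsto 1/x$, which sends $dx/x$ to $-dx/x$. Hence $\sigma^*$ acts by $-1$ on $\varphi_{xy}\mathbb{Q}$ at the origin, so $\epsilon = -1$ and the lemma follows. The main obstacle is the precise bookkeeping of shifts and Tate twists in the naturality square relating $c$ and $d$ via $\sigma$, so that the sign on the one-dimensional vanishing cycle sheaf transports exactly to the factor $(-1)^m$ in $ac = (-1)^m bd$ without spurious extra factors.
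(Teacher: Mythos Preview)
Your reduction to $m=1$, $n=0$ via K\"unneth and Thom--Sebastiani is exactly what the paper does. The difference lies in the treatment of the base case $f=xy$ on $\mathbb{A}^2$. The paper computes the one-dimensional space $H_c^{2,crit}(\mathbb{A}^2,xy)$ directly from the distinguished triangle
\[
H_c^{crit}(\mathbb{A}^2,xy)\longrightarrow H_c(\{xy=0\},\mathbb{Q})\longrightarrow H_c(\{xy=1\},\mathbb{Q}),
\]
identifies a generator with the class $\alpha=(1,-1)\in H_c^2(\{xy=0\})\cong\mathbb{Q}^{\oplus 2}$ (the two coordinates being the fundamental classes of $Z(x)$ and $Z(y)$), and then simply reads off $\pi_{x,*}(i_x)^*_\varphi\alpha=1$ versus $\pi_{y,*}(i_y)^*_\varphi\alpha=-1$. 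You instead invoke the involution $\sigma:(x,y)\mapsto(y,x)$ to reduce the comparison of $ac$ and $bd$ to the single eigenvalue $\epsilon$ of $\sigma^*$ on $H_c^{crit}$, and then compute $\epsilon=-1$ via the action of $x\mapsto 1/x$ on $\tilde H^1$ of the Milnor fiber $\{xy=1\}\cong\mathbb{C}^*$. Both arguments are correct; the paper's is more elementary and entirely self-contained (no naturality square between dimensional reduction and $\sigma$ to verify, which you rightly flag as the delicate point), while yours is more conceptual and makes the geometric origin of the sign---the orientation reversal on the vanishing circle---transparent. Note that the paper's explicit generator $(1,-1)$ is visibly anti-invariant under swapping the two components, so the two computations of $\epsilon=-1$ are really the same fact seen from two sides.
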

\begin{proof}
By the Thom--Sebastiani isomorphism it is enough to prove the same
result in the case $m=1$ and $n=0$, i.e. consider the maps
\[
\xymatrix{
\Ho_c(\pt,\mathbb{Q})&\ar[l]^{\pi_{x,*}}\Ho_c(Z(x),\mathbb{Q})
\\
\Ho_c(Z(y),\mathbb{Q})\ar[u]^{\pi_{y,*}}&\ar[l]^{(i_y)^*_{\varphi}}\Ho^{crit}_c(\mathbb{A}^2,xy)\ar[u]^{(i_x)^*_{\varphi}}
}
\]
where $i_x:Z(x)\hookrightarrow \mathbb{A}^2$ and
$i_y:Z(y)\hookrightarrow\mathbb{A}^2$ are the obvious inclusions,
and $\pi_x,\pi_y$ are the projections to a point. From now on we adopt the notation
that when we pull back or push forward vanishing cycle cohomology,
we insert a $\varphi$ as a subscript or superscript, as we will be
comparing ordinary compactly supported cohomology with vanishing
cycle cohomology and operations on the two should be distinguished
by the notation.  Now
$\Ho^{crit}_c(\mathbb{A}^2,xy)$ is concentrated in degree 2, and is
explicitly described as follows.  There is a distinguished triangle
\[
\Ho^{crit}_c(\mathbb{A}^2,xy)\rightarrow
\Ho_c(Z(xy),\mathbb{Q})\rightarrow\Ho_c(Z(xy-1),\mathbb{Q}).
\]
The variety $Z(xy)$ has two components, so that
$\Ho_c^2(Z(xy),\mathbb{Q})\cong\mathbb{Q}^{\oplus 2}$, with $(1,0)$
representing the fundamental class of $Z(x)$ and $(0,1)$
representing the fundamental class of $Z(y)$.  On the other hand,
$Z(xy-1)$ has only one component, and we may write the
map
$\Ho_c^2(Z(xy),\mathbb{Q})\rightarrow\Ho^2_c(Z(xy-1),\mathbb{Q})$ as
follows
\[
\mathbb{Q}^{\oplus 2}\xrightarrow{(a,b)\mapsto a+b}\mathbb{Q}
\]
and deduce that $\Ho^{2,crit}_c(\mathbb{A}^2,xy)$ is represented
by the class $\alpha=(1,-1)\in\Ho_c(\{xy=0\})$.  The claim is then
clear; $\alpha$ is sent to
$1\in\Ho_c(\pt,\mathbb{Q})\cong\mathbb{Q}$ by
$\pi_{x,*}(i_x)^*_{\varphi}$ and to $-1$ by
$\pi_{y,*}(i_y)^*_{\varphi}$.

\end{proof}

Throughout the rest of the appendix we will use the following diagram.
\begin{equation}
\label{bigDiag} \xymatrix{
\textbf{M}_{\widehat{Q},\gamma_1}\times\textbf{M}_{\widehat{Q},\gamma_2}&Z_{\gamma_1,\gamma_2}\times \textbf{M}_{L,\gamma_1,\gamma_2}\ar[l]_a\ar[d]_d&\textbf{M}_{\widehat{Q},\gamma_1,\gamma_2}\ar[l]_-b\ar[d]_e\\
\textbf{M}_{\overline{Q},\gamma_1}\times\textbf{M}_{\overline{Q},\gamma_2}\times\textbf{M}_{L,\gamma}\ar[u]^c\ar[r]_-f&Z_{\gamma_1,\gamma_2}\times\textbf{M}_{L,\gamma}&\textbf{M}_{\overline{Q},\gamma_1,\gamma_2}\times\textbf{M}_{L,\gamma}\ar[l]_g\ar[d]_h\\&&\textbf{M}_{\widehat{Q},\gamma}
}
\end{equation}
\begin{rem}
Each of these spaces carries a function induced by $W$, and
regardless of the space we will call this function $Tr(W)$.  For all
the spaces apart from those in the middle column, there is a natural
inclusion of the space inside $\textbf{M}_{\widehat{Q},\gamma}$ and
we just define the function $Tr(W)$ to be the restriction of the
function $Tr(W)$ on the ambient space.  In the case of
$(\rho',\rho'',p)\in Z_{\gamma_1,\gamma_2}$ and
$\xi\in\textbf{M}_{L,\gamma}$ we define
$Tr(W)(\rho',\rho'',p,\xi)=Tr(p\xi)$ to be the function $Tr(W)$ on
$Z_{\gamma_1,\gamma_2}\times\textbf{M}_{L,\gamma}$, and we
restrict this same function to define the function $Tr(W)$ on
$Z_{\gamma_1,\gamma_2}\times\textbf{M}_{L,\gamma_1,\gamma_2}$.
\end{rem}
Next we will define the constituent maps of diagram (\ref{bigDiag}).
\begin{itemize}
\item
We define $a$ as follows.  If $(\rho',\rho'',p)\in
Z_{\gamma_1,\gamma_2}$ and
$\xi\in\textbf{M}_{L,\gamma_1,\gamma_2}$ we forget $p$, and
restrict $\xi$ to the block diagonals to get elements $\xi'$ of
$\textbf{M}_{L,\gamma_1}$ and
$\xi''\in\textbf{M}_{L,\gamma_2}$. Via the natural isomorphism
$\textbf{M}_{\widehat{Q},\gamma}\cong\textbf{M}_{\overline{Q},\gamma}\times\textbf{M}_{L,\gamma}$
we get an element
$(\rho',\xi',\rho'',\xi'')\in\textbf{M}_{\widehat{Q},\gamma_1}\times\textbf{M}_{\widehat{Q},\gamma_2}$.
\item
$b$ is defined via the composition
\[
\textbf{M}_{\widehat{Q},\gamma_1,\gamma_2}\cong\textbf{M}_{\overline{Q},\gamma_1,\gamma_2}\times\textbf{M}_{L,\gamma_1,\gamma_2}\xrightarrow{f'\times
\id}Z_{\gamma_1,\gamma_2}\times\textbf{M}_{L,\gamma_1,\gamma_2}
\]
where $f'$ is as in the definition of $m_{SV}$.
\item
Via forgetting the offdiagonal blocks of a representation there is a
$\textbf{G}_{\gamma_1}\times \textbf{G}_{\gamma_2}$-equivariant
affine fibration $\varpi:\textbf{M}_{L,\gamma}\rightarrow
\textbf{M}_{L,\gamma_1}\times\textbf{M}_{L,\gamma_2}$, and
for $c$ we take the composition
\begin{align*}
&\textbf{M}_{\overline{Q},\gamma_1}\times\textbf{M}_{\overline{Q},\gamma_2}\times
\textbf{M}_{L,\gamma}\\\xrightarrow{\id\times\varpi}
&\textbf{M}_{\overline{Q},\gamma_1}\times\textbf{M}_{\overline{Q},\gamma_2}\times
\textbf{M}_{L,\gamma_1}\times\textbf{M}_{L,\gamma_2}\cong\textbf{M}_{\widehat{Q},\gamma_1}\times\textbf{M}_{\widehat{Q},\gamma_2}.
\end{align*}
\item
$d$ is the product of the identity on $Z_{\gamma_1,\gamma_2}$ with
the obvious inclusion
$i:\textbf{M}_{L,\gamma_1,\gamma_2}\hookrightarrow\textbf{M}_{L,\gamma}$.
\item
$e$ is the composition
\[
\textbf{M}_{\widehat{Q},\gamma_1,\gamma_2}\cong\textbf{M}_{\overline{Q},\gamma_1,\gamma_2}\times\textbf{M}_{L,\gamma_1,\gamma_2}\xrightarrow{\id\times
i}\textbf{M}_{\overline{Q},\gamma_1,\gamma_2}\times\textbf{M}_{L,\gamma}.
\]
\item
$f$ is given by
$f(\rho',\rho'',\xi)=(\rho',\rho'',\mu_{\gamma_1}(\rho')\otimes\mu_{\gamma_2}(\rho''),\xi)$.
\item
$g=f'\times \id$ where $f'$ is as in the definition of $m_{SV}$.
\item
$h$ is the obvious inclusion.
\end{itemize}
\begin{rem}
The functions $Tr(W)$ commute with all of the maps in
(\ref{bigDiag}).
\end{rem}
Denote by $\cdot:\mathbb{Z}^{I}\otimes \mathbb{Z}^{I}\rightarrow
\mathbb{Z}$ the usual dot product
\[
\gamma_1\cdot\gamma_2:=\sum_{i\in I}\gamma_1^{i}\gamma_2^{i}.
\]
\begin{thm}
\label{signTheorem} Let $\nu:
\Ho^{crit}_{c,\tilde{\textbf{G}}_{\gamma}}(\textbf{M}_{\widehat{Q},\gamma},W_\gamma)\rightarrow
\Ho_{c,\tilde{\textbf{G}}_{\gamma}}(\mu_{\gamma}^{-1}(0),\mathbb{Q})$
be the dimensional reduction isomorphism.  Let
$\gamma_1,\gamma_2\in\mathbb{N}^{I}$ with
$\gamma=\gamma_1+\gamma_2$. Then
\[
m_{SV}^{\vee}=(\nu_{\gamma_1}\otimes\nu_{\gamma_2})\circ
m_{KS}^{\vee}\circ\nu^{-1}_{\gamma}\cdot
(-1)^{\gamma_1\cdot\gamma_2}.
\]
\end{thm}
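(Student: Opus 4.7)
The plan is to extract both products from a single commutative diagram of constructible sheaves obtained by applying $\varphi_{Tr(W)}$ to~(\ref{bigDiag}), and to localize the discrepancy between the two resulting maps in equivariant cohomology to a single ``off-diagonal'' direction, where Lemma~\ref{signLemma} produces the claimed sign. The first step is to verify that each square (and triangle) in~(\ref{bigDiag}) is of one of the types for which vanishing cycles behave well: cartesian with one proper side; a product with the identity; an affine fibration; or a Thom--Sebastiani decomposition of $Tr(W)$. Given the explicit definitions of $a,b,c,d,e,f,g,h$, each of these verifications is a direct check. Once this is done, proper base change and the standard functorialities for $\varphi_{Tr(W)}$ yield a diagram of sheaves to which one applies $\tilde{\textbf{G}}$-equivariant compactly supported cohomology and then dualizes.

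Next, the dimensional-reduction isomorphism~(\ref{dimred}) is applied at each corner of~(\ref{bigDiag}). For the middle and bottom rows (where the loop factor is the full $\textbf{M}_{L,\gamma}$), dimensional reduction in the $\xi$-direction kills the moment-map coordinates: in particular the critical cohomology of $Z_{\gamma_1,\gamma_2}\times\textbf{M}_{L,\gamma}$ equipped with $Tr(W)(\rho',\rho'',p,\xi)=Tr(p\xi)$ is identified with the ordinary compactly supported cohomology of $\{p=0\}\subset Z_{\gamma_1,\gamma_2}$, which is exactly $\mu_{\gamma_1}^{-1}(0)\times\mu_{\gamma_2}^{-1}(0)$. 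Under these identifications, the map $g$ reproduces $f'_{*}$ and the composition $h\circ e$ reproduces $h^{*}$ from the SV construction, so the route through the middle and bottom rows of~(\ref{bigDiag}) recovers $m_{SV}^{\vee}$. In parallel, the top row (which uses the upper-triangular loop space $\textbf{M}_{L,\gamma_1,\gamma_2}\subset\textbf{M}_{L,\gamma}$) produces $m_{KS}^{\vee}$ directly: the affine fibration $a$ plays the role of $\zeta$ and the inclusion $b$ plays the role of the combined pullback $\tau^{*}$.

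The only remaining comparison is between two ways of performing dimensional reduction at the middle column of~(\ref{bigDiag}), namely at $Z_{\gamma_1,\gamma_2}\times\textbf{M}_{L,\gamma}$: either first integrate out the full loop direction $\textbf{M}_{L,\gamma}$ (the bottom route), or first restrict to $\textbf{M}_{L,\gamma_1,\gamma_2}$ via $d$ and then integrate out that subspace (the top route). The discrepancy is concentrated in the ``cross-block'' directions: the strictly lower-triangular loops of total dimension $\gamma_1\cdot\gamma_2$ and the off-diagonal block $p_{12}$ of $p\in\gl_{\gamma_1,\gamma_2}$. In $Tr(W)$ these appear precisely as a bilinear pairing of the form $\sum x_i y_i$, with $x_i$ the entries of the lower-triangular loops and $y_i$ the entries of $p_{12}$; the two dimensional reductions then correspond exactly to the two restrictions in Lemma~\ref{signLemma}. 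Applied fiberwise, the lemma produces the sign $(-1)^{m}$ with $m=\gamma_1\cdot\gamma_2$, which is the sign in the theorem.

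The main obstacle is the sheaf-theoretic bookkeeping: tracking shifts, Tate twists, and changes of group through the diagram so that no spurious sign or twist is introduced by any of the base-change or affine-fibration isomorphisms, and isolating Lemma~\ref{signLemma} as the unique source of $(-1)^{\gamma_1\cdot\gamma_2}$. Particular care is needed in handling the $T$-equivariant Thom--Sebastiani and K\"unneth isomorphisms, and in matching the two different $\tilde{\textbf{G}}_{\gamma_1,\gamma_2}$-equivariant structures on the intermediate spaces. Once these are pinned down, the identification $m_{SV}^{\vee}=(-1)^{\gamma_1\cdot\gamma_2}(\nu_{\gamma_1}\otimes\nu_{\gamma_2})\circ m_{KS}^{\vee}\circ\nu_{\gamma}^{-1}$ follows by running the diagram chase through~(\ref{bigDiag}) in both directions.
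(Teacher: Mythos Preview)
Your strategy is essentially that of the paper: extract both products as routes through diagram~(\ref{bigDiag}), show they share a common tail, and localize the discrepancy to a single square where Lemma~\ref{signLemma} applies fibrewise to the pairing between the off-diagonal block of $p$ and the strictly lower-triangular loops.

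Two points need correction. First, in~(\ref{bigDiag}) one has $ab=\zeta$ and $he=\tau$, so the top row alone gives only the pushforward half of $m_{KS}^\vee$; the map $b=f'\times\id$ is not an inclusion and does not play the role of $\tau^*$. Second, the step you gloss over but which actually makes the comparison go through is the base-change in the Cartesian right-hand square, yielding $b_*^{\varphi}e_\varphi^*=d_\varphi^* g_*^{\varphi}$ and hence $m_{KS}^\vee=a_*^\varphi d_\varphi^* g_*^\varphi h_\varphi^*$. This is what forces both products through the common path $g_*^\varphi h_\varphi^*$ (which one then checks commutes with dimensional reduction) and isolates the sign to the comparison $a_*^\varphi d_\varphi^*$ versus $c_*^\varphi f_\varphi^*$ on the top-left square, where your final paragraph and Lemma~\ref{signLemma} apply verbatim.
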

\begin{proof}
Firstly we observe that, with the notation of diagram
(\ref{bigDiag})
\begin{align*}
ab=&\zeta\\
he=&\tau
\end{align*}
where these are the maps appearing in the definition of the KS
product, and so $m_{KS}$ is obtained by the (dual of the operation)
$a^{\varphi}_*b^{\varphi}_*e_{\varphi}^*h_{\varphi}^*=\zeta^{\varphi}_*\tau_{\varphi}^*$
in vanishing cycle cohomology.

The rightmost square of (\ref{bigDiag}) is Cartesian, $d$ is the
inclusion of a vector bundle over $Z_{\gamma_1,\gamma_2}$, and $e$
is the pullback of this inclusion along the map
$\textbf{M}_{\overline{Q},\gamma_1,\gamma_2}\rightarrow
Z_{\gamma_1,\gamma_2}$. Then we obtain commutativity of the square
of sheaves on
$Z_{\gamma_1,\gamma_2}\times\textbf{M}_{L,\gamma}$
\[
\xymatrix{
d_!\QQ&g_!e_!\QQ=d_!b_!\QQ\ar[l]\\
\QQ\ar[u]&g_!\QQ\ar[l]\ar[u]  }
\]
and so the commutativity of the square
\[
\xymatrix{
\Ho^{crit}_{c,\tilde{\textbf{G}}_{\gamma_1,\gamma_2}}(Z_{\gamma_1,\gamma_2}\times \textbf{M}_{L,\gamma_1,\gamma_2},W)&\Ho^{crit}_{c,\tilde{\textbf{G}}_{\gamma_1,\gamma_2}}(\textbf{M}_{\widehat{Q},\gamma_1,\gamma_2},W)\ar[l]^-{b^{\varphi}_*}\\
\Ho^{crit}_{c,\tilde{\textbf{G}}_{\gamma_1,\gamma_2}}(Z_{\gamma_1,\gamma_2}\times
\textbf{M}_{L,\gamma},W)\ar[u]^{d_{\varphi}^*}&\ar[l]^{g^{\varphi}_*}\Ho^{crit}_{c,\tilde{\textbf{G}}_{\gamma_1,\gamma_2}}(\textbf{M}_{\overline{Q},\gamma_1,\gamma_2}\times\textbf{M}_{L,\gamma},W)\ar[u]^{e_{\varphi}^*}.
}
\]
Since the dual of KS product is given by
$a^{\varphi}_*b^{\varphi}_*e^*_{\varphi}h^*_{\varphi}$, we may thus
rewrite it as
\begin{equation}
\label{modKS}
m_{KS}^{\vee}=a_*^{\varphi}d_{\varphi}^*g^{\varphi}_*h_{\varphi}^*.
\end{equation}
The following diagrams of sheaves commute:
\[
\xymatrix{
\varphi_{Tr(W)}\QQ\ar[d]\ar[r]&\varphi_{Tr(W)} h_*\QQ\ar[d]\\
\QQ\ar[r]&h_*\QQ }
\]
on $\textbf{M}_{\widehat{Q},\gamma}$ and
\[
\xymatrix{
\varphi_{Tr(W)}g_!\QQ\ar[r]\ar[d]&\varphi_{Tr(W)}\QQ\ar[d]\\
g_!\QQ\ar[r]&\QQ }
\]
on $Z_{\gamma_1,\gamma_2}\times\textbf{M}_{L,\gamma}$, from
which we deduce the commutativity of the following squares
\[
\xymatrix{
\Ho^{crit}_{c,\tilde{\textbf{G}}_{\gamma_1,\gamma_2}}(\textbf{M}_{\widehat{Q},\gamma},W)\ar[d]^{\cong}\ar[r]^-{h^*_{\varphi}}&\Ho^{crit}_{c,\tilde{\textbf{G}}_{\gamma_1,\gamma_2}}(\textbf{M}_{\overline{Q},\gamma_1,\gamma_2}\times\textbf{M}_{L,\gamma},W)\ar[d]^{\cong}\\
\Ho_{c,\tilde{\textbf{G}}_{\gamma_1,\gamma_2}}(\textbf{M}_{\widehat{Q},\gamma}\cap
\mu^{-1}_{\gamma}(0))\ar[r]^-{h^*}&\Ho_{c,\tilde{\textbf{G}}_{\gamma_1,\gamma_2}}(\left(\textbf{M}_{\overline{Q},\gamma}\times\textbf{M}_{L,\gamma}\right)\cap\mu^{-1}_{\gamma}(0))
}
\]
and
\[
\xymatrix{
\Ho^{crit}_{c,\tilde{\textbf{G}}_{\gamma_1,\gamma_2}}((\textbf{M}_{\overline{Q},\gamma_1,\gamma_2}\times\textbf{M}_{L,\gamma},W)\ar[r]_-{g_*^{\varphi}}\ar[d]^{\cong}&\Ho^{crit}_{c,\tilde{\textbf{G}}_{\gamma_1,\gamma_2}}((Z_{\gamma_1,\gamma_2}\times\textbf{M}_{L,\gamma},W)\ar[d]^{\cong}\\
\Ho_{c,\tilde{\textbf{G}}_{\gamma_1,\gamma_2}}((\textbf{M}_{\overline{Q},\gamma}\times\textbf{M}_{L,\gamma})\cap\mu_{\gamma}^{-1}(0))\ar[r]^{g_*}&\Ho_{c,\tilde{\textbf{G}}_{\gamma_1,\gamma_2}}((Z_{\gamma_1,\gamma_2}\times\textbf{M}_{L,\gamma})\cap\mu_{\gamma}^{-1}(0))
}
\]
where in both cases the vertical isomorphisms are given by
dimensional reduction.

Note that dimensionally reducing along the fibre
$\textbf{M}_{L,\gamma}$ does indeed impose the full moment map
relations, as in each case the derivative of $Tr(W)$ with respect to
the function keeping track of the $i,j$th entry of $l_k$ imposes the
condition that $\mu_{\gamma}(\rho)_{ji}=0$.

Similarly, we have the commuting diagram
\[
\xymatrix{
\Ho_{c,\tilde{\textbf{G}}_{\gamma_1,\gamma_2}}^{crit}(Z_{\gamma_1,\gamma_2}\times\textbf{M}_{L,\gamma},W)\ar[r]^-{f^*_{\varphi}}\ar[d]^{\cong}&\Ho^{crit}_{c,\tilde{\textbf{G}}_{\gamma_1,\gamma_2}}(\textbf{M}_{\overline{Q},\gamma_1}\times\textbf{M}_{\overline{Q},\gamma_2}\times\textbf{M}_{L,\gamma},W)\ar[d]^{\cong}\\
\Ho_{c,\tilde{\textbf{G}}_{\gamma_1,\gamma_2}}((Z_{\gamma_1,\gamma_2}\times\textbf{M}_{L,\gamma})\cap\mu_{\gamma}^{-1}(0))\ar[r]^-{f^*}&\Ho_{c,\tilde{\textbf{G}}_{\gamma_1,\gamma_2}}((\textbf{M}_{\overline{Q},\gamma_1}\times\textbf{M}_{\overline{Q},\gamma_2}\times\textbf{M}_{L,\gamma})\cap\mu_{\gamma}^{-1}(0))
}
\]
where $\mu:Z_{\gamma_1,\gamma_2}\times
\textbf{M}_{L,\gamma}\rightarrow
\mathfrak{g}_{\gamma_1,\gamma_2}$ is defined by
$((\rho',\rho'',p),\xi)\mapsto p$. Putting the previous three
diagrams together, we conclude that the following diagram commutes
\[
\xymatrix{
\Ho^{crit}_{c,\tilde{\textbf{G}}_{\gamma_1,\gamma_2}}(\textbf{M}_{\widehat{Q},\gamma},W)\ar[d]\ar[rr]^-{f^*_{\varphi}g_*^{\varphi}h^*_{\varphi}}&&\Ho^{crit}_{c,\tilde{\textbf{G}}_{\gamma_1,\gamma_2}}(\textbf{M}_{\overline{Q},\gamma_1}\times\textbf{M}_{\overline{Q},\gamma_2}\times\textbf{M}_{L,\gamma},W)\ar[d]\\
\Ho_{c,\tilde{\textbf{G}}_{\gamma_1,\gamma_2}}(\textbf{M}_{\widehat{Q},\gamma}\cap
\mu_{\gamma}^{-1}(0))\ar[rr]^-{m_{SV}^{\vee}\otimes\id_{\Ho_c(\textbf{M}_{L,\gamma})}}&&
\Ho_{c,\tilde{\textbf{G}}_{\gamma_1,\gamma_2}}((\textbf{M}_{\overline{Q},\gamma_1}\times\textbf{M}_{\overline{Q},\gamma_2}\times\textbf{M}_{L,\gamma})\cap\mu_{\gamma}^{-1}(0))
}
\]

Note that the function $Tr(W)$ on
$\textbf{M}_{\overline{Q},\gamma_1}\times\textbf{M}_{\overline{Q},\gamma_2}\times\textbf{M}_{L,\gamma}$
is the box sum of the zero function on the off block-diagonal
entries of $\textbf{M}_{L,\gamma}$ and the function $Tr(W)$ on
$\textbf{M}_{\widehat{Q},\gamma_1}\times\textbf{M}_{\widehat{Q},\gamma_2}$,
so that we have a commuting square
\[
\xymatrix{
\Ho^{crit}_{c,\tilde{\textbf{G}}_{\gamma_1\times\gamma_2}}(\textbf{M}_{\overline{Q},\gamma_1}\times\textbf{M}_{\overline{Q},\gamma_2}\times\textbf{M}_{L,\gamma},W)\ar[r]^-{c^{\varphi}_*}\ar[d]^{\cong^{\textrm{dim
red}}}&\Ho^{crit}_{c,\tilde{\textbf{G}}_{\gamma_1\times\gamma_2}}(\textbf{M}_{\widehat{Q},\gamma_1}\times\textbf{M}_{\widehat{Q},\gamma_2},W)\ar[d]^{\nu^{-1}_{\gamma_1}\otimes\nu^{-1}_{\gamma_2}}
\\
\Ho_{c,\tilde{\textbf{G}}_{\gamma_1\times\gamma_2}}((\textbf{M}_{\overline{Q},\gamma_1}\times\textbf{M}_{\overline{Q},\gamma_2}\times\textbf{M}_{L,\gamma})\cap\mu^{-1}(0))\ar[r]&\Ho_{c,\tilde{\textbf{G}}_{\gamma_1\times\gamma_2}}((\textbf{M}_{\widehat{Q},\gamma_1}\times\textbf{M}_{\widehat{Q},\gamma_2})\cap\mu^{-1}(0))
}
\]
where the bottom (shifted) isomorphism is the usual isomorphism
associated to an affine fibration (note that the function $\mu$ is
insensitive to the $\textbf{M}_{L}$ component).

So putting everything together, we have that
\begin{equation}
\label{finalSV}
m_{SV,\gamma_1,\gamma_2}^{\vee}=(\nu_{\gamma_1}\otimes\nu_{\gamma_2})c_*^{\varphi}f_{\varphi}^*g_*^{\varphi}h^*_{\varphi}\nu^{-1}_{\gamma}
\end{equation}
and
\begin{align}
m_{KS,\gamma_1,\gamma_2}^{\vee}=&a_*^{\varphi}b_*^{\varphi}e^*_{\varphi}h_{\varphi}^*\\=&a_*^{\varphi}d^*_{\varphi}g_*^{\varphi}h_{\varphi}^*&\textrm{using
equation }(\ref{modKS})\label{finalKS}
\end{align}
while what we want to show is that
\begin{equation}
\label{finalComp}
m_{SV,\gamma_1,\gamma_2}^{\vee}=^{?}(\nu_{\gamma_1}\otimes\nu_{\gamma_2})m_{KS,\gamma_1,\gamma_2}^{\vee}\nu^{-1}_{\gamma}
\end{equation}
We will see that we do not actually have a strict equality.  By comparing
(\ref{finalSV}) and (\ref{finalKS}) with the left and right hand
side of (\ref{finalComp}) we see that the left and right hand side
are the same, except we have substituted
$c_*^{\varphi}f^*_{\varphi}$ for $a_*^{\varphi}d^*_{\varphi}$, and
so compatibility of the KS product with the SV product is settled by
(almost) commutativity of the following diagram:
\begin{equation}
\label{twistcomm} \xymatrix{
\Ho^{crit}_{c,\tilde{\textbf{G}}_{\gamma_1\times\gamma_2}}(\textbf{M}_{\widehat{Q},\gamma_1}\times\textbf{M}_{\widehat{Q},\gamma_2})&\Ho^{crit}_{c,\tilde{\textbf{G}}_{\gamma_1\times\gamma_2}}(Z_{\gamma_1,\gamma_2}\times \textbf{M}_{L,\gamma_1,\gamma_2})\ar[l]_{a_*^{\varphi}}\\
\Ho^{crit}_{c,\tilde{\textbf{G}}_{\gamma_1\times\gamma_2}}(\textbf{M}_{\overline{Q},\gamma_1}\times\textbf{M}_{\overline{Q},\gamma_2}\times\textbf{M}_{L,\gamma})\ar[u]^{c_*^{\varphi}}&\ar[l]^-{f^*_{\varphi}}\Ho^{crit}_{c,\tilde{\textbf{G}}_{\gamma_1\times\gamma_2}}(Z_{\gamma_1,\gamma_2}\times\textbf{M}_{L,\gamma})\ar[u]^{d^*_{\varphi}}.
}
\end{equation}
Consider $Z_{\gamma_1,\gamma_2}\times\textbf{M}_{L,\gamma}$ as
a $\tilde{\textbf{G}}_{\gamma_1\times\gamma_2}$-equivariant bundle
over $\textbf{M}_{\widehat{Q},\gamma_1}\times
\textbf{M}_{\widehat{Q},\gamma_2}$. The function $Tr(W)_{\gamma}$
splits as a box sum $\tau\boxplus \zeta$, where $\zeta$ is a
quadratic function on the fibres and $\tau$ is the function $Tr(W)$
on the base. Consider a point $x$ of
$(\textbf{M}_{\widehat{Q},\gamma_1}\times\textbf{M}_{\widehat{Q},\gamma_2},\tilde{\textbf{G}}_{\gamma_1\times\gamma_2})_N$,
and the fibre of
$(Z_{\gamma_1,\gamma_2}\times\textbf{M}_{L,\gamma},\tilde{\textbf{G}}_{\gamma_1\times\gamma_2})_N$
over it.  This fibre splits as a direct sum $A\oplus B\oplus C$,
where
$A\cong\mathfrak{p}_{\gamma_1,\gamma_2}/(\mathfrak{g}_{\gamma_1}\times\mathfrak{g}_{\gamma_2})$,
$B$ is given by the top right hand block of the $\rho(l_i)$, and $C$
is given by the bottom left hand block of $\rho(l_i)$. The factor
$A$ comes from the fibre of the projection
$Z_{\gamma_1,\gamma_2}\rightarrow
\textbf{M}_{\overline{Q},\gamma_1}\times\textbf{M}_{\overline{Q},\gamma_2}$.
Via the Killing form, we have the isomorphism $k:A\cong C^{\vee}$, and $\zeta$
is the function $(a,b,c)\mapsto k(a)(c)$. Applying the
Thom--Sebastiani isomorphism, pushing forward to our point $x$ in
the base $(\textbf{M}_{\widehat{Q},\gamma_1}\times
\textbf{M}_{\widehat{Q},\gamma_2},\tilde{\textbf{G}}_{\gamma_1\times\gamma_2})_N$, the underlying diagram of sheaves of
diagram (\ref{twistcomm}) is given by tensoring the constant map on
$\varphi_{\tau_N}[-1]\mathbb{Q}$ with
\[
\xymatrix{
\Ho_c(\pt,\mathbb{Q})&\ar[l] \Ho_c(A\oplus B,\mathbb{Q})\\
\Ho_c(B\oplus C,\mathbb{Q})\ar[u]&\ar[l]\Ho^{crit}_c(A\oplus B\oplus
C,\zeta)\ar[u]. }
\]

We claim that the diagram commutes, up to the sign $(-1)^{\dim(A)}$.
Explicitly, we may choose $x_1,\ldots,x_n$ coordinates for $A$,
$y_1,\ldots,y_m$ coordinates for $B$, and $z_1,\ldots,z_n$
coordinates for $C$ such that $\zeta=\sum_{i=1}^{n}x_iz_i$.  Note
that after taking cohomology, the bottom map and the rightmost map
are just dimensional reduction isomorphisms, while the other two
maps are the isomorphisms in compactly supported cohomology induced
by affine fibrations. Then the claim is just Lemma \ref{signLemma},
and the result follows.
\end{proof}
Define the modified dimensional reduction map
$\nu_{\gamma}^{\circ}:\Ho^{crit}_{c,\textbf{G}_{\gamma}}(\textbf{M}_{\widehat{Q},\gamma},W)\rightarrow\Ho_{c,\textbf{G}_{\gamma}}(\mu_{\gamma}^{-1}(0),\mathbb{Q})
$ by
\[
\nu_{\gamma}^{\circ}=\nu_{\gamma}\cdot \sqrt{-1}^{\gamma\cdot\gamma}.
\]
\begin{cor}
The map
\[
{\nu}^{\circ\vee}:\bigoplus_{\gamma\in
\mathbb{N}^{I}}\Ho_{c,\textbf{G}_{\gamma}}(\mu_{\gamma}^{-1}(0),\mathbb{Q})^\vee\otimes \mathbb{C}\rightarrow\bigoplus_{\gamma\in
\mathbb{N}^{I}}\Ho^{crit}_{c,\textbf{G}_{\gamma}}(\textbf{M}_{\widehat{Q},\gamma},W_\gamma)^\vee\otimes \mathbb{C}
\]
is an isomorphism of algebras: for all
$\gamma_1,\gamma_2\in\mathbb{N}^{I}$ with $\gamma_1+\gamma_2=\gamma$
there is an equality of maps
\[
m_{KS,\gamma_1,\gamma_2}\circ(\nu_{\gamma_1}^{\circ\vee}\otimes\nu_{\gamma_2}^{\circ\vee})=\nu_{\gamma}^{\circ\vee}\circ
m_{SV,\gamma_1,\gamma_2}.
\]
\end{cor}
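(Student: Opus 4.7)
The plan is to deduce the Corollary directly from Theorem \ref{signTheorem}; the geometric work has already been done, and what remains is a scalar book-keeping exercise showing that the sign $(-1)^{\gamma_1\cdot\gamma_2}$ produced there is precisely absorbed by the extra factor $\sqrt{-1}^{\gamma\cdot\gamma}$ built into the modified map $\nu_\gamma^\circ$.

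Concretely, I would start by dualizing the identity of Theorem \ref{signTheorem}. Using $(A\circ B)^\vee = B^\vee\circ A^\vee$, the invertibility of $\nu_\gamma^\vee$, the identity $(\nu_\gamma^{-1})^\vee = (\nu_\gamma^\vee)^{-1}$, and the fact that scalars commute with dualization, one rewrites the identity as
\[
\nu_\gamma^\vee \circ m_{SV,\gamma_1,\gamma_2} = (-1)^{\gamma_1\cdot\gamma_2}\, m_{KS,\gamma_1,\gamma_2}\circ(\nu_{\gamma_1}^\vee\otimes\nu_{\gamma_2}^\vee).
\]
Next I would insert the definition $\nu_\gamma^{\circ\vee}=\sqrt{-1}^{\gamma\cdot\gamma}\,\nu_\gamma^\vee$ on the left-hand side, and its analogues at $\gamma_1,\gamma_2$ on the right-hand side, and compare the resulting scalar prefactors. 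The comparison reduces to the scalar identity
\[
\sqrt{-1}^{\gamma\cdot\gamma}\cdot (-1)^{\gamma_1\cdot\gamma_2} \;=\; \sqrt{-1}^{\gamma_1\cdot\gamma_1+\gamma_2\cdot\gamma_2},
\]
which is immediate from bilinearity $\gamma\cdot\gamma=\gamma_1\cdot\gamma_1+2\gamma_1\cdot\gamma_2+\gamma_2\cdot\gamma_2$ together with $\sqrt{-1}^{2\gamma_1\cdot\gamma_2}=(-1)^{\gamma_1\cdot\gamma_2}$.

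There is no genuine obstacle here; the proof is purely formal given Theorem \ref{signTheorem}. The only pitfalls are routine ones: the reversal of order under dualization of compositions and tensor products, and tracking that the $\sqrt{-1}^{\gamma\cdot\gamma}$ twist requires passing to $\mathbb{C}$-coefficients (which is why the Corollary is stated after tensoring with $\mathbb{C}$). Finally, that $\nu^{\circ\vee}$ is an honest isomorphism of algebras, rather than merely an intertwiner of products, follows at once because each $\nu_\gamma^\circ$ is invertible (being a nonzero scalar multiple of the dimensional reduction isomorphism $\nu_\gamma$ of \eqref{dimred}).
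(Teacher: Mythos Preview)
Your proposal is correct and follows essentially the same approach as the paper: both dualize Theorem~\ref{signTheorem} to obtain $\nu_\gamma^\vee\circ m_{SV}=(-1)^{\gamma_1\cdot\gamma_2}\,m_{KS}\circ(\nu_{\gamma_1}^\vee\otimes\nu_{\gamma_2}^\vee)$, insert the scalar twist $\nu_\gamma^{\circ}=\sqrt{-1}^{\gamma\cdot\gamma}\nu_\gamma$, and reduce to the bilinear identity $\gamma\cdot\gamma=\gamma_1\cdot\gamma_1+2\gamma_1\cdot\gamma_2+\gamma_2\cdot\gamma_2$. The paper presents this as a four-line chain of equalities, but the content is identical to yours.
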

\begin{proof}
We have
\begin{align*}
m_{KS,\gamma_1,\gamma_2}\circ(\nu^{\circ\vee}_{\gamma_1}\otimes\nu_{\gamma_2}^{\circ\vee})=&\sqrt{-1}^{\gamma_1\cdot\gamma_1+\gamma_2\cdot\gamma_2}m_{KS,\gamma_1,\gamma_2}\circ(\nu_{\gamma_1}^{\vee}\otimes\nu_{\gamma_2}^{\vee})\\
=&\sqrt{-1}^{\gamma_1\cdot\gamma_1+\gamma_2\cdot\gamma_2}\nu^{\vee}_{\gamma}\circ m_{SV,\gamma_1,\gamma_2}\cdot (-1)^{\gamma_1\cdot\gamma_2}&(\textrm{Theorem }\ref{signTheorem})\\
=&\nu^{\vee}_{\gamma}\circ m_{SV,\gamma_1,\gamma_2}\cdot\sqrt{-1}^{\gamma\cdot\gamma}\\
=&\nu^{\circ\vee}_{\gamma}\circ m_{SV,\gamma_1,\gamma_2}.
\end{align*}
\end{proof}

As we can see from the proof, the isomorphism holds over the subfield $\mathbb{Q}[i]\subset \mathbb{C}$, where $i=\sqrt{-1}$.

Addresses:

Y.S.: Department of Mathematics, KSU, Manhattan, KS 66506, USA, {soibel@math.ksu.edu}

J.R.: Department of Mathematics, KSU, Manhattan, KS 66506, USA, {jren@ksu.edu}

\end{document}